\xapptocmd\normalsize{%
 \abovedisplayskip=10pt plus 1pt minus 3pt
 \abovedisplayshortskip=1pt plus 3pt
 \belowdisplayskip=10pt plus 2pt minus 3pt
 \belowdisplayshortskip=8pt plus 3pt minus 2pt
}{}{}
\numberwithin{equation}{section}
\DeclareSymbolFont{largesymbols}{OMX}{cmex}{m}{n}
\newtheoremstyle{theoremstyle}
  {0mm} 
  {0mm} 
  {\itshape} 
  {} 
  {\bfseries} 
  {.} 
  {.5em} 
  {} 
\theoremstyle{theoremstyle}
\newtheorem{proposition}{Proposition}
\newtheorem{lemma}[proposition]{Lemma}
\newtheorem{theorem}[proposition]{Theorem}
\newtheorem{independentcorollary}[proposition]{Corollary}
\newtheorem{corollary}{Corollary}[proposition]
\newtheorem{fact}[proposition]{Fact}
\newenvironment{sepcorollary}[1][]
 {
  \if\relax\detokenize{#1}\relax
  \else
    \ifcsname #1-used\endcsname
      \expandafter\xdef\csname #1-used\endcsname{\the\numexpr\csname #1-used\endcsname+1}%
    \else
      \expandafter\gdef\csname #1-used\endcsname{1}%
    \fi
    \renewcommand{\thecorollaryinner}{\ref*{#1}.\csname #1-used\endcsname}%
  \fi
  \corollaryinner
 }
 {\endcorollaryinner}
\newtheoremstyle{examplestyle}
  {0mm} 
  {0mm} 
  {} 
  {} 
  {\bfseries} 
  {.} 
  {.5em} 
  {} 
\theoremstyle{examplestyle}
\newtheorem{remark}[proposition]{Remark}
\newtheorem{definition}[proposition]{Definition}
\newtheoremstyle{remarks}
  {0mm} 
  {0mm} 
  {} 
  {} 
  {\bfseries} 
  {.} 
  {-.5em} 
  {} 
\theoremstyle{remarks}
\renewcommand{\Im}{\hspace{0.08em}\mathrm{Im}\hspace{0.04em}}
\newcommand{\Ker}{\mathrm{Ker} \hspace{0.04em}}
\newcommand{\ccdot}{\hspace{-1.2pt} \cdot}
\newcommand{\rank}{\mathrm{rank} \hspace{0.1em}}
\newcommand{\Id}{\mathrm{Id}}
\newcommand{\Aut}{\mathrm{Aut} \hspace{0.06em}}
\newcommand{\Inn}{\mathrm{Inn}}
\newcommand{\Hol}{\mathrm{Hol}}
\newcommand{\DD}{\mathrm{DD}}
\newcommand{\Lie}{\mathrm{Lie} \hspace{0.06em}}
\newcommand{\defeq}{\vcentcolon=}
\newcommand{\Zz}{\mathbb{Z}}
\newcommand{\Rl}{\mathbb{R}}
\newcommand{\Cx}{\mathbb{C}}
\newcommand{\Hq}{\mathbb{H}}
\newcommand{\Ad}{\mathrm{Ad} \hspace{0.06em}}
\newcommand{\ad}{\mathrm{ad} \hspace{0.06em}}
\newcommand{\mk}[1]{\mathfrak{{#1}}}
\newcommand{\mr}[1]{\mathrm{{#1}}}
\newcommand{\vspan}{\mathrm{span}}
\newcommand{\wt}[1]{\widetilde{{#1}}}
\newcommand{\Der}{\mathrm{Der}}
\newcommand{\set}[1]{\hspace{-0.8pt} \left \{ \hspace{0.03em} {#1} \hspace{0.03em} \right \}}
\newcommand{\cross}[2]{\langle \hspace{0.1em} {#1} \hspace{0.1em} | \hspace{0.1em} {#2} \hspace{0.1em} \rangle \hspace{0.02em}}
\newcommand{\restr}[2]{{\left.\kern-\nulldelimiterspace #1 \vphantom{\big|} \right|_{#2}}}
\newcommand{\GL}{\mathrm{GL}}
\renewcommand{\O}{\mathrm{O}}
\newcommand{\W}{\mathrm{W}}
\newcommand\altxrightarrow[2][0pt]{\mathrel{\ensurestackMath{\stackengine%
  {\dimexpr#1-7.5pt}{\xrightarrow{\phantom{#2}}}{\scriptstyle\!#2\,}%
  {O}{c}{F}{F}{S}}}}
\newcommand{\isoto}{\altxrightarrow[1pt]{\sim}}
\newcommand{\extp}{\@ifnextchar^\@extp{\@extp^{\,}}}
\def\@extp^#1{\mathop{\bigwedge\nolimits^{\!#1}}}
\DeclareRobustCommand{\loplus}{\mathbin{\mathpalette\dog@lsemi{+}}}
\DeclareRobustCommand{\roplus}{\mathbin{\mathpalette\dog@rsemi{+}}}
\newcommand{\dog@rsemi}[2]{\dog@semi{#1}{#2}{-90,90}}
\newcommand{\dog@lsemi}[2]{\dog@semi{#1}{#2}{270,90}}
\newcommand{\dog@semi}[3]{%
  \begingroup
  \sbox\z@{$\m@th#1#2$}%
  \setlength{\unitlength}{\dimexpr\ht\z@+\dp\z@\relax}%
  \makebox[\wd\z@]{\raisebox{-\dp\z@}{%
    \begin{picture}(1,1)
    \linethickness{\variable@rule{#1}}
    \roundcap
    \put(0.5,0.5){\makebox(0,0){\raisebox{\dp\z@}{$\m@th#1#2$}}}
    \put(0.5,0.5){\arc[#3]{0.5}}
    \end{picture}%
  }}%
  \endgroup
}
\newcommand{\variable@rule}[1]{%
  \fontdimen8  
  \ifx#1\displaystyle\textfont3\else
    \ifx#1\textstyle\textfont3\else
      \ifx#1\scriptstyle\scriptfont3\else
        \scriptscriptfont3\relax
  \fi\fi\fi
}
\begin{document}

\title[Automorphisms of real semisimple Lie algebras]{Automorphisms of real semisimple Lie algebras and their restricted root systems}
\author{Ivan Solonenko}
\address{Department of Mathematics, King's College London, United Kingdom}
\email{ivan.solonenko@kcl.ac.uk}

\begin{abstract}
We prove that every automorphism of the restricted root system of a real semisimple Lie algebra -- when defined properly -- can be lifted to an automorphism of that Lie algebra. In particular, this can be applied to automorphisms of the Dynkin diagram of the restricted root system. We also discuss some applications of this result to the theory of symmetric spaces of noncompact type.
\end{abstract}

\maketitle

\counterwithout{equation}{section}

\section{Introduction}\label{intro}

The correspondence between complex semisimple Lie algebras and reduced root systems is a classical and pivotal result in Lie theory. Its discovery allowed W. Killing and later E. Cartan to obtain the classification of complex simple Lie algebras by reducing it to the level of root systems. It also proves indispensable when one studies automorphisms of complex semisimple Lie algebras, as those can be -- in a sense -- reduced to automorphisms of the corresponding root system and its Dynkin diagram. One of the pillars of this theory is the Isomorphism Theorem, which basically says that isomorphisms between complex semisimple Lie algebras can be defined merely on the so-called canonical generators -- provided that the Cartan matrix is preserved. This theorem can be used to lift any automorphism of the root system of $\mk{g}$ to an automorphism of $\mk{g}$ itself. As a consequence, one can show that the outer automorphism group of $\mk{g}$ is isomorphic to the automorphism group of its Dynkin diagram.

Adjacent to the theory of complex semisimple Lie algebras is that of real semisimple Lie algebras. Those were classified by E. Cartan, but the classification is more intricate than in the complex case. Two standard classification routes are by means of Vogan or Satake diagrams: both are obtained by decorating the Dynkin diagram of the complexification of $\mk{g}$ in a certain way. There is another diagram that one can associate to a real semisimple Lie algebra, namely the Dynkin diagram of its restricted root system. The restricted root system of $\mk{g}$ loses all information about the compact ideals of $\mk{g}$ and hence is not suitable for classification. However, when treated properly, it can recover the noncompact part of $\mk{g}$, that is, the sum of all of its noncompact simple ideals. The main disadvantage of this theory when compared to the complex case is that there is no sensible analog of the Isomorphism Theorem -- there are no canonical generators to begin with. Yet, by using the known classification of real semisimple Lie algebras, it is possible to prove that every automorphism of the restricted root system of $\mk{g}$ -- when defined properly -- can be lifted to an automorphism of $\mk{g}$. This is the main purpose of the current paper. In the proof, we deliberately eschew a case-by-case consideration. Instead, after reducing the problem to simple real Lie algebras, we divide those into three different blocks -- depending on the nature of $\mk{g}$ as a real form of its complexification. This result appears to be somewhat folklore, as several authors state it without giving any reference (see, e.g., \cite[p. 11]{berndttamarufoliations} or \cite[p. 111]{murakami}).

We also give a reformulation of this result for symmetric spaces of noncompact type, as those are intimately related to noncompact semisimple Lie algebras.

This paper is organized as follows. In Section \ref{section_complex} we review some aspects of the theory of root systems and their relation to complex semisimple Lie algebras. We focus primarily on root system isomorphisms and the closely related notion of Dynkin diagram isomorphisms. In Section \ref{section_real} we look at real semisimple Lie algebras and their restricted root systems. We introduce the notion of a weighted isomorphism and prove that every weighted root system isomorphism can, in a certain sense, be lifted to a Lie algebra isomorphism. In Section \ref{symmetric_spaces} we look at the results of Section \ref{section_real} through the lens of symmetric spaces of noncompact type. We also establish some links between a symmetric space of noncompact type and its isometry Lie algebra.

\section{Root system isomorphisms and complex semisimple Lie algebras}\label{section_complex}

Practically everything discussed in this section can be found in \cite[Ch. II]{knapp} or \cite[\S 1,4]{onishchik}. We begin by reviewing some aspects of the theory of root systems. Let $(V, \Updelta)$ be a root system. Here $V$ is a finite-dimensional Euclidean real vector space and $\Updelta \subseteq V$ is the root system itself. We are not assuming $\Updelta$ to be reduced or irreducible. First, we recall the notion of isomorphism of root systems.

\begin{definition}\label{def1}
Let $(V', \Updelta')$ be another root system. A linear isomorphism $f \colon V \isoto V'$ is called a (\textbf{root system}) \textbf{isomorphism between} $\boldsymbol{(V, \Updelta)}$ \textbf{and} $\boldsymbol{(V', \Updelta')}$ (or \textbf{between} $\boldsymbol{\Updelta}$ \textbf{and} $\boldsymbol{\Updelta'}$, for brevity) if the following two conditions are satisfied:
\begin{enumerate}[(i)]
    \item $f(\Updelta) = \Updelta'$;
    \item $f$ preserves the root integers, i.e. $n_{f(\upalpha)f(\upbeta)} = n_{\upalpha \upbeta}$ for all $\upalpha, \upbeta \in \Updelta$ (here $n_{\upalpha \upbeta} = \frac{2 \cross{\upalpha}{\upbeta}}{||\upbeta||^2}$);
\end{enumerate}
If $V' = V$ and $\Updelta' = \Updelta$, we call $f$ an \textbf{automorphism of} $\boldsymbol{(V, \Updelta)}$ (or \textbf{of} $\boldsymbol{\Updelta}$, for brevity). The (finite) group of all automorphisms of $\Updelta$ is denoted by $\boldsymbol{\Aut(\Updelta)} \subseteq \GL(V)$.
\end{definition}

Note that, assuming (i), condition (ii) is automatically satisfied if $f$ is conformal (i.e., a scalar multiple of an isometry). Although a root system isomorphism does not have to be conformal in general, we are going to prove that it cannot stray too far from being one. To this end, we need the following simple

\begin{lemma}\label{rootlemma}
Let $(V, \Updelta)$ be a root system. There exists a unique (up to reordering) orthogonal decomposition $V = \bigoplus_{i=1}^k V_i$ such that $\Updelta = \bigsqcup_{i=1}^k \Updelta_i$, where $\Updelta_i = \Updelta \cap V_i$, and $(V_i, \Updelta_i)$ is an irreducible root system. Two roots $\upalpha, \upbeta \in \Updelta$ lie in the same component $\Updelta_i$ if and only if there exists a chain of roots $\uplambda_0, \uplambda_1, \ldots, \uplambda_s \in \Updelta$ with $\uplambda_0 = \upalpha, \uplambda_s = \upbeta$, such that $\cross{\uplambda_{i-1}}{\uplambda_i} \ne 0$ for $1 \leqslant i \leqslant s$.
\end{lemma}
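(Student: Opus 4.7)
The plan is to extract the decomposition from the equivalence relation suggested by the second half of the statement. Declare $\upalpha \sim \upbeta$ to mean that there exists a chain $\uplambda_0 = \upalpha, \uplambda_1, \ldots, \uplambda_s = \upbeta$ in $\Updelta$ with $\cross{\uplambda_{i-1}}{\uplambda_i} \ne 0$ for every $i$. This is manifestly an equivalence relation, partitioning $\Updelta$ into classes $\Updelta_1, \ldots, \Updelta_k$; setting $V_i \defeq \vspan \Updelta_i$, the second assertion of the lemma then holds by construction.

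For the existence of the decomposition, two routine observations suffice. First, any two roots from distinct classes must be orthogonal, since otherwise the one-step chain would merge their classes. Hence $V_i \perp V_j$ for $i \ne j$, and since $\Updelta$ spans $V$ one obtains the orthogonal direct sum $V = \bigoplus_{i=1}^k V_i$ with $\Updelta \cap V_i = \Updelta_i$. Second, each $(V_i, \Updelta_i)$ is a root system in its own right: for $\upalpha, \upbeta \iin \Updelta_i$ the reflected vector $s_\upalpha(\upbeta) = \upbeta - n_{\upbeta \upalpha} \upalpha$ lies in $V_i \cap \Updelta = \Updelta_i$, and integrality is inherited automatically.

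The substance of the proof is a small auxiliary claim used in two directions: a root system $(W, \Gamma)$ is irreducible if and only if any two of its roots are connected by a chain of consecutively non-orthogonal roots within $\Gamma$. Both implications reduce to the observation that, given an orthogonal splitting $\Gamma = A \sqcup B$, membership in $A$ (or $B$) propagates along such a chain, so no chain can cross from one side to the other. Applied to $(V_i, \Updelta_i)$, which is chain-connected by construction, this gives irreducibility. Applied to any other orthogonal decomposition $V = \bigoplus W_j$ with $(W_j, \Updelta \cap W_j)$ irreducible, it forces each $\Updelta \cap W_j$ to be chain-connected and hence to coincide with some $\Updelta_i$, yielding uniqueness. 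The argument is entirely combinatorial; no step is genuinely difficult, but this bidirectional chain-vs-irreducibility claim is where any subtlety of the proof is concentrated.
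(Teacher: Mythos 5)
Your proof is correct and follows essentially the same route as the paper's: construct the decomposition from the chain-connectedness equivalence relation, set $V_i = \vspan \Updelta_i$, and use the orthogonality of distinct classes to get the direct sum and the uniqueness. The only real difference is expository -- you isolate the "chain-connected $\Leftrightarrow$ irreducible" equivalence as a named auxiliary claim and spell out why $(V_i, \Updelta_i)$ is a root system, whereas the paper leaves both of these implicit ("by design", "trivially").
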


Unsurprisingly, we call each $(V_i, \Updelta_i)$ an \textbf{irreducible component of} $\boldsymbol{(V, \Updelta)}$ and the decomposition $V = \bigoplus_{i=1}^k V_i, \Updelta = \bigsqcup_{i=1}^k \Updelta_i$ the \textbf{decomposition of} $\boldsymbol{(V, \Updelta)}$ \textbf{into its irreducible components}.

\begin{proof}[Proof of the lemma]
Introduce an equivalence relation on $\Updelta$: two roots $\upalpha, \upbeta \in \Updelta$ are equivalent if and only if they can be connected by a chain of roots $\uplambda_0, \uplambda_1, \ldots, \uplambda_s \in \Updelta$ as above. This is clearly an equivalence relation, so we can write $\Updelta = \bigsqcup_{i=1}^k \Updelta_i$ for the decomposition of $\Updelta$ into the equivalence classes. Define $V_i$ to be the linear span of $\Updelta_i$. Since $\Updelta$ spans $V$, we have $V = \sum_{i=1}^k V_i$. By construction, given $i,j \in \set{1, \ldots, k}, i \ne j$, every root $\upalpha \in \Updelta_i$ is orthogonal to every root $\upbeta \in \Updelta_j$, so $V_i \perp V_j$. Therefore, we have an orthogonal decomposition $V = \bigoplus_{i=1}^k V_i$. In particular, this implies that $\Updelta_i = \Updelta \cap V_i$ for each $i \in \set{1, \ldots, k}$. Trivially, for every subspace $W \subseteq V$, $(W, \Updelta \cap W)$ is a root system, hence so is each $(V_i, \Updelta_i)$. Note that each $\Updelta_i$ is irreducible by design. Let $V = \bigoplus_{i=1}^{k'} V'_i$ be another decomposition of $V$ as in the lemma. It follows from what we have already proven that all roots in $\Updelta \cap V'_i$ are equivalent to each other for each $i \in \set{1, \ldots, k'}$. On the other hand, if $i,j \in \set{1, \ldots, k'}, i \ne j$, no root in $\Updelta \cap V'_i$ can be equivalent to any root in $\Updelta \cap V'_j$. Consequently, the decomposition $V = \bigoplus_{i=1}^{k'} V'_i$ coincides with our constructed decomposition up to reordering of the factors, which completes the proof.
\end{proof}

Now we can prove the following result, which asserts that root system isomorphisms are "almost" conformal maps.

\begin{proposition}\label{conformal}
Let $(V, \Updelta)$ and $(V', \Updelta')$ be root systems and $f \colon V \isoto V'$ an isomorphism between them. Write $V = \bigoplus_{i=1}^k V_i, \Updelta = \bigsqcup_{i=1}^k \Updelta_i$ and $V' = \bigoplus_{i=1}^{k'} V'_i, \Updelta' = \bigsqcup_{i=1}^{k'} \Updelta'_i$ for the decompositions of $(V, \Updelta)$ and $(V', \Updelta')$ into their irreducible components. Then $k = k'$ and, after reordering $V_i$'s if needed, $f(V_i) = V'_i$ and $f(\Updelta_i) = \Updelta'_i$ for each $i \in \set{1, \ldots, k}$. Moreover, for each $i$, $\restr{f}{V_i} \colon V_i \isoto V'_i$ is a conformal map, i.e. there exists $a_i > 0$ such that $a_i \restr{f}{V_i} \colon V_i \isoto V'_i$ is an isometry.
\end{proposition}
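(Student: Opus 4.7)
The plan is to extract everything from the two defining conditions of a root-system isomorphism, together with the chain characterization of irreducible components in Lemma \ref{rootlemma}. The crucial observation is that, because $n_{\upalpha \upbeta} = 0$ if and only if $\cross{\upalpha}{\upbeta} = 0$, condition (ii) tells us that $f$ preserves orthogonality between roots: $\cross{\upalpha}{\upbeta} = 0 \iff \cross{f(\upalpha)}{f(\upbeta)} = 0$. Combined with $f(\Updelta) = \Updelta'$, this means that $f$ sends chains of pairwise non-orthogonal roots to chains of pairwise non-orthogonal roots, and conversely for $f^{-1}$. By the chain-equivalence part of Lemma \ref{rootlemma}, the equivalence classes $\Updelta_i$ are thus mapped bijectively onto the equivalence classes $\Updelta'_j$. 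This forces $k = k'$, and after reindexing the $\Updelta'_j$ we obtain $f(\Updelta_i) = \Updelta'_i$ for each $i$. Taking linear spans and using that $V_i = \vspan \Updelta_i$ and $V'_i = \vspan \Updelta'_i$, we conclude $f(V_i) = V'_i$.

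Having reduced everything to the irreducible case, I would fix $i$ and prove that $\restr{f}{V_i}$ is conformal. The key identity is that, for any non-orthogonal pair $\upalpha, \upbeta \iin \Updelta_i$, one has $n_{\upalpha \upbeta} / n_{\upbeta \upalpha} = ||\upalpha||^2 / ||\upbeta||^2$, and this ratio is preserved by $f$. Defining $c(\upalpha) \defeq ||f(\upalpha)|| / ||\upalpha||$, this yields $c(\upalpha) = c(\upbeta)$ whenever $\cross{\upalpha}{\upbeta} \ne 0$. By Lemma \ref{rootlemma} applied inside the irreducible component $\Updelta_i$, any two roots of $\Updelta_i$ are joined by such a chain, so $c$ is constant on $\Updelta_i$; call this common value $c_i > 0$.

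Finally, I would upgrade this to a conformal statement on all of $V_i$. For any two roots $\upalpha, \upbeta \iin \Updelta_i$, condition (ii) implies $\cos \theta(f(\upalpha), f(\upbeta)) = \cos \theta(\upalpha, \upbeta)$ (since $n_{\upalpha \upbeta} n_{\upbeta \upalpha} = 4 \cos^2 \theta$ and the individual signs are preserved), so combining with $||f(\upalpha)|| = c_i ||\upalpha||$ and $||f(\upbeta)|| = c_i ||\upbeta||$ gives $\cross{f(\upalpha)}{f(\upbeta)} = c_i^2 \cross{\upalpha}{\upbeta}$. Since $\Updelta_i$ spans $V_i$, bilinearity extends this identity to all of $V_i$, so setting $a_i \defeq 1/c_i$ makes $a_i \restr{f}{V_i}$ an isometry.

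There is no real obstacle — the argument is short once one notices that condition (ii) alone encodes both orthogonality and length-ratio data. The only point demanding mild care is the reduction to chains of non-orthogonal roots, which relies on the chain characterization from Lemma \ref{rootlemma} rather than on any explicit case analysis of irreducible root systems; this keeps the proof classification-free.
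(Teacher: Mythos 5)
Your argument is correct and is essentially the paper's own proof: both first observe that condition (ii) forces $f$ to preserve root orthogonality, invoke the chain characterization of Lemma \ref{rootlemma} to match up irreducible components, and then propagate a single scaling factor along chains of non-orthogonal roots via the identity $n_{\upalpha\upbeta}/n_{\upbeta\upalpha} = \|\upalpha\|^2/\|\upbeta\|^2$. The only cosmetic difference is that you spell out the final step via the cosine formula $n_{\upalpha\upbeta}n_{\upbeta\upalpha}=4\cos^2\theta$ and bilinear extension, whereas the paper compresses this into the remark that a map preserving root integers and root lengths is automatically an isometry.
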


\begin{proof}
To begin with, observe that $\upalpha \perp \upbeta \Leftrightarrow n_{\upalpha\upbeta} = 0$, so $f$ must preserve root orthogonality. From this it easily follows that $f$ preserves the equivalence relation on roots described in the proof of Lemma \ref{rootlemma}, which, in turn, implies the first assertion. For the remainder of the proof, we may assume that both $(V, \Updelta)$ and $(V', \Updelta')$ are irreducible, and we need to proof that $f$ is conformal. Pick any $\upalpha_0 \in \Updelta$ and define $a = \frac{||f(\upalpha_0)||}{||\upalpha_0||} > 0$. We will prove that $a^{-1}f$ is an isometry. Since $a^{-1}f$ already preserves the root integers, it suffices to show that it preserves the length of each root. Note that $\frac{n_{\upalpha\upbeta}}{n_{\upbeta\upalpha}} = \frac{||\upalpha||^2}{||\upbeta||^2}$ whenever $\cross{\upalpha}{\upbeta} \ne 0$. Hence, $f$ preserves the length-ratio of any pair of non-orthogonal roots. Pick any $\upbeta \in \Updelta$. According to Lemma \ref{rootlemma}, there exists a chain of roots $\uplambda_0, \uplambda_1, \ldots, \uplambda_s$ with $\uplambda_0 = \upalpha_0, \uplambda_s = \upbeta$, such that $\cross{\uplambda_{i-1}}{\uplambda_i} \ne 0$ for $1 \leqslant i \leqslant s$. We can compute:
$$
\frac{||f(\upalpha_0)||}{||f(\upbeta)||} = \frac{||f(\uplambda_0)||}{||f(\uplambda_1)||} \frac{||f(\uplambda_1)||}{||f(\uplambda_2)||} \cdots \frac{||f(\uplambda_{s-1})||}{||f(\uplambda_s)||} = \frac{||\uplambda_0||}{||\uplambda_1||} \frac{||\uplambda_1||}{||\uplambda_2||} \cdots \frac{||\uplambda_{s-1}||}{||\uplambda_s||} = \frac{||\upalpha_0||}{||\upbeta||},
$$
i.e $\frac{||f(\upbeta)||}{||\upbeta||} = \frac{||f(\upalpha_0)||}{||\upalpha_0||} = a$ or, in other words, $||a^{-1}f(\upbeta)|| = ||\upbeta||$. Consequently, $a^{-1} f$ preserves the lengths of all the roots and is an isometry, which means that $f$ is conformal.
\end{proof}

\begin{corollary}
If $(V, \Updelta)$ is an irreducible root system, then $\Aut(\Updelta) \subseteq \O(V)$.
\end{corollary}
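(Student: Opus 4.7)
The plan is to invoke Proposition \ref{conformal} and then pin down the scaling factor. Since $(V,\Updelta)$ is irreducible and $f \in \Aut(\Updelta)$ is a root system isomorphism from $(V,\Updelta)$ to itself, Proposition \ref{conformal} furnishes a constant $a > 0$ such that $a^{-1}f$ is an isometry of $V$. It remains only to show that $a = 1$.

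The key point is finiteness of $\Updelta$: the map $f$ permutes the finite set $\Updelta$, so it preserves its set of root lengths. Let $M = \max_{\upgamma \in \Updelta} \|\upgamma\|$ and pick $\upalpha \in \Updelta$ with $\|\upalpha\| = M$. Since $a^{-1}f$ is an isometry, $\|f(\upalpha)\| = a\|\upalpha\| = aM$. But $f(\upalpha) \in \Updelta$, so $\|f(\upalpha)\| \leqslant M$, forcing $a \leqslant 1$.

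To get the reverse inequality, observe that $f^{-1} \in \Aut(\Updelta)$ as well, and by construction its associated scaling constant (from Proposition \ref{conformal} applied to $f^{-1}$) is $a^{-1}$. Repeating the same length-maximum argument with $f^{-1}$ in place of $f$ yields $a^{-1} \leqslant 1$. Hence $a = 1$, so $f$ itself is an isometry, i.e.\ $f \in \O(V)$.

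There is no real obstacle here; the only thing to be careful about is ensuring the scaling factor $a$ really must be $1$ rather than some nontrivial number — which is secured by the finiteness and $f$-invariance of $\Updelta$, via the comparison with a longest root.
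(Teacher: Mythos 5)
Your proof is correct and follows essentially the same route as the paper: apply Proposition \ref{conformal} to get the conformal factor, then use finiteness of $\Updelta$ (hence of its set of root lengths) to force that factor to be $1$. The paper phrases the final step as a contradiction (WLOG $f$ strictly rescales lengths, impossible since it permutes a finite set of lengths) whereas you compare against a root of maximal length and then repeat for $f^{-1}$ — two presentations of the same idea.
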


\begin{proof}
Take any automorphism $f \in \Aut(\Updelta)$. By Proposition \ref{conformal}, there exists $a > 0$ such that $af$ is an isometry. Assume that $f$ is not orthogonal, i.e. $a \ne 1$. By replacing $f$ with $f^{-1}$ if needed, we may assume $a>1$, i.e. $f$ increases the length of any nonzero vector. But $\Updelta$ is finite, hence so is the set of lengths of all roots in $\Updelta$. Since $f(\Updelta) = \Updelta$, we arrive at a contradiction.
\end{proof}

Let $(V, \Updelta)$ be any root system. Let us look more closely at the automorphism group $\Aut(\Updelta)$. Recall that we have the Weyl group $\W(\Updelta)$ generated by the reflections $s_\upalpha$ in the root hyperplanes $\upalpha^\perp$, $\upalpha \in \Updelta$. Each $s_\upalpha$ is orthogonal and preserves $\Updelta$, hence it is an automorphism of $\Updelta$. We deduce that $\W(\Updelta) \subseteq \Aut(\Updelta)$. In fact, it is a normal subgroup, which can be easily checked on its generators: $f \in \Aut(\Updelta), \upalpha \in \Updelta \Rightarrow f s_\upalpha f^{-1} = s_{f(\upalpha)}$. The short exact sequence of groups $\W(\Updelta) \hookrightarrow \Aut(\Updelta) \twoheadrightarrow \Aut(\Updelta)/\W(\Updelta)$ splits, albeit not canonically. In order to split it, one first has to make a choice of positive roots. Pick a Weyl chamber $D \subseteq V$, let $\Updelta^+ \subseteq \Updelta$ be the corresponding subset of positive roots and $\Uplambda = \set{\upalpha_1, \ldots, \upalpha_r} \subseteq \Updelta^+$ the subset of simple roots. We denote the corresponding Dynkin diagram by $\boldsymbol{\DD}$. If a simple root $\upalpha_i$ has the property that $2\upalpha_i$ is also a root, the corresponding vertex of the Dynkin diagram is represented by two concentrated circles.

\begin{definition}\label{def2}
Let $(V', \Updelta')$ be another root system with a fixed choice of simple roots $\Uplambda' \subseteq \Updelta'^+ \subseteq \Updelta'$ and the corresponding Dynkin diagram $\DD'$. A bijection $s \colon \Uplambda \isoto \Uplambda'$ is called a (\textbf{diagram}) \textbf{isomorphism between} $\boldsymbol{\DD}$ \textbf{and} $\boldsymbol{\DD'}$ if it a graph isomorphism that preserves edge directions, the number of lines an edge consists of, and the number of circles a vertex consists of. If $V' = V, \Updelta' = \Updelta,$ and $\Uplambda' = \Uplambda$, we call $s$ an \textbf{automorphism of} $\boldsymbol{\DD}$. The group of all automorphisms of $\DD$ is denoted by $\boldsymbol{\Aut(\DD)}$.
\end{definition}

The chief example of diagram isomorphisms comes from root system isomorphisms. Suppose that $f \colon V \isoto V'$ is an isomorphism between $\Updelta$ and $\Updelta'$ such that $f(\Uplambda) = \Uplambda'$. Then $s = \restr{f}{\Uplambda} \colon \Uplambda \isoto \Uplambda'$ is clearly an isomorphism between $\DD$ and $\DD'$. This also explains why the Dynkin diagram of a root system is well-defined in the first place and does not depend on the choice of a Weyl chamber: if $\Uplambda_1 \subseteq \Updelta$ is another set of simple roots, then there exists $w \in \W(\Updelta) \subseteq \Aut(\Updelta)$ mapping $\Uplambda$ onto $\Uplambda_1$, so the corresponding Dynkin diagrams $\DD$ and $\DD_1$ are isomorphic. This construction ($f \mapsto \restr{f}{\Uplambda}$) actually exhausts\footnote{This implies that a root system is fully determined by its Dynkin diagram up to isomorphism.} all Dynkin diagram isomorphisms between $\DD$ and $\DD'$. Although this is a standard fact in the theory of root systems (see, for example, \cite[Prop. 2.66]{knapp}), we will reprove it for our own purposes in Proposition \ref{semidirect1} below.

Recall that for each $r \geqslant 1$ there exists only one irreducible nonreduced root system of rank $r$ up to isomorphism (see \cite[Prop. 2.92]{knapp}). It is denoted by $(BC)_r$ and its Dynkin diagram looks like this:
\begin{center}
\begin{dynkinDiagram}[arrow style={length=5pt}, scale=3]B{}
\dynkinRootMark{O}5
\end{dynkinDiagram}
\end{center}

\begin{remark}
Some authors who work only with reduced root systems ask $s \colon \Uplambda \isoto \Uplambda'$ in Definition \ref{def2} to preserve the Cartan matrix instead. This is equivalent to our definition for reduced root systems, as the Cartan matrix and the Dynkin diagram encode the same amount of data for such systems. However, for nonreduced root systems, our definition is stronger because the Dynkin diagram carries more (in fact, all) information about the root system in this case. For instance, the Cartan matrices of $B_r$ and $(BC)_r$ are the same, whereas their Dynkin diagrams are not -- the difference is precisely the vertex represented by two concentrated circles.
\end{remark}

It is very straightforward to compute the group $\Aut(\DD)$ for all irreducible root systems by looking at their classification:
$$
\Aut(\DD) \simeq \begin{cases}
S_3, &\text{if} \hspace{0.4em} \Updelta \simeq D_4; \\
\mathbb{Z}/2\mathbb{Z}, &\text{if} \hspace{0.4em} \Updelta \simeq A_n \, (n \geqslant 2), D_n \, (n \geqslant 5), \hspace{0.1em} \text{or} \hspace{0.2em} E_6; \\
\set{e}, &\text{otherwise}.
\end{cases}
$$
Since the set of simple roots forms a basis for the underlying space of a root system, every diagram isomorphism $s \colon \Uplambda \isoto \Uplambda'$ between $\DD$ and $\DD'$ extends uniquely to a linear isomorphism $V \isoto V'$, which we denote by the same letter. In particular, we have a natural group embedding $\Aut(\DD) \subseteq \GL(V)$ (once again, this embedding only makes sense after we fix the set of simple roots). Before we relate diagram isomorphisms to root system isomorphisms, we make a few observations. First off, note that $\Aut(\Updelta)$ acts naturally on the set of Weyl chambers of $(V, \Updelta)$. Second, let $V = \bigoplus_{i=1}^k V_i, \Updelta = \bigsqcup_{i=1}^k \Updelta_i$ be the decomposition of $(V, \Updelta)$ into its irreducible components. It is easy to see that for each $i \in \set{1, \ldots, k}$, $\Updelta^+_i = \Updelta_i \cap \Updelta^+$ is a set of positive roots for $\Updelta_i$. Consequently, we have $\Uplambda = \bigsqcup_{i=1}^k \Uplambda_i$ and $D = \prod_{i=1}^k D_i$, where $\Uplambda_i = \Uplambda \cap \Updelta_i^+$ is a set of simple roots for $\Updelta_i$ and $D_i = D \cap V_i$ is the corresponding Weyl chamber. This implies that for each $i$, the Dynkin diagram $\DD_i$ of $\Updelta_i$ is a connected component of $\DD$, and we have $\DD = \bigsqcup_{i=1}^k \DD_i$. Finally, note that $\W(\Updelta) = \prod_{i=1}^k \W(\Updelta_i)$.

\begin{proposition}\label{semidirect1}
Let $(V, \Updelta)$ and $(V', \Updelta')$ be root systems with fixed choices of simple roots $\Uplambda \subseteq \Updelta$ and $\Uplambda' \subseteq \Updelta'$.
\begin{enumerate}
    \item Given any diagram isomorphism $s \colon \Uplambda \isoto \Uplambda'$ between $\DD$ and $\DD'$, its linear extension $s \colon V \isoto V'$ is an isomorphism between $\Updelta$ and $\Updelta'$. An isomorphism $V \isoto V'$ between $\Updelta$ and $\Updelta'$ comes from a diagram isomorphism $\Uplambda \isoto \Uplambda'$ precisely when it maps $\Uplambda$ onto $\Uplambda'$.
    \item $\Aut(\DD) \subseteq \Aut(\Updelta)$. In terms of the action of $\Aut(\Updelta)$ on the set of Weyl chambers, $\Aut(\DD)$ is the stabilizer of $D$.
    \item $\Aut(\DD) = \W(\Updelta) \rtimes \Aut(\DD)$.
\end{enumerate}
\end{proposition}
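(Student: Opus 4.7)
The plan is to handle the three parts in sequence, with (1) doing the essential work and (2), (3) reducing to standard Weyl-group arguments.

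For (1), the $\Leftarrow$ direction is essentially tautological: if $f \colon V \isoto V'$ is an isomorphism with $f(\Uplambda) = \Uplambda'$, then $\restr{f}{\Uplambda}$ is a bijection preserving Cartan integers on the simple roots (hence all edge data of the diagram), and also the double-circle property, since $\upalpha \iin \Uplambda$ has that property iff $2\upalpha \iin \Updelta$ iff $f(2\upalpha) = 2f(\upalpha) \iin \Updelta'$. For the $\Rightarrow$ direction, start with a diagram isomorphism $s \colon \Uplambda \isoto \Uplambda'$ extended linearly to $s \colon V \isoto V'$. The key observation is that $s$ intertwines simple reflections: for any $i, j$,
\[
s_{s(\upalpha_i)}\bigl(s(\upalpha_j)\bigr) = s(\upalpha_j) - n_{s(\upalpha_j)\, s(\upalpha_i)}\, s(\upalpha_i) = s\bigl(\upalpha_j - n_{\upalpha_j \upalpha_i}\, \upalpha_i\bigr) = s\bigl(s_{\upalpha_i}(\upalpha_j)\bigr),
\]
using that $s$ preserves Cartan integers on $\Uplambda$; since the $\upalpha_j$'s span $V$, this gives $s \circ s_{\upalpha_i} = s_{s(\upalpha_i)} \circ s$. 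As simple reflections generate the respective Weyl groups, conjugation by $s$ furnishes an isomorphism $\W(\Updelta) \isoto \W(\Updelta')$. Now I invoke the standard fact that every root is of the form $w(\upalpha_i)$ with $w \iin \W(\Updelta), \upalpha_i \iin \Uplambda$, or $2 w(\upalpha_i)$ when $\upalpha_i$ is of double-circle type (a property preserved by $s$); this gives $s(\Updelta) \subseteq \Updelta'$, with equality by the symmetric argument for $s^{-1}$. Root-integer preservation on all of $\Updelta$ then follows from the Weyl intertwining together with the scaling laws $n_{c\upalpha,\upbeta} = c \ccdot n_{\upalpha,\upbeta}$ and $n_{\upalpha, c\upbeta} = c^{-1} n_{\upalpha,\upbeta}$, reducing everything to the case of simple roots, which holds by hypothesis.

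Parts (2) and (3) are then routine. For (2), every $\sigma \iin \Aut(\DD)$ extends via (1) to an element of $\Aut(\Updelta)$, so $\Aut(\DD) \subseteq \Aut(\Updelta)$; and one checks that $f \iin \Aut(\Updelta)$ stabilizes $D$ iff it stabilizes $\Uplambda$ (using the intrinsic description of $\Uplambda$ as the indecomposable positive roots associated to $D$, noting that any $f \iin \Aut(\Updelta)$ maps Weyl chambers to Weyl chambers and is conformal on each irreducible component by Proposition \ref{conformal}, so preserves the positivity test). For (3), normality of $\W(\Updelta)$ in $\Aut(\Updelta)$ was already observed before Definition \ref{def2}; simple transitivity of $\W(\Updelta)$ on Weyl chambers combined with (2) gives $\W(\Updelta) \cap \Aut(\DD) = \set{e}$; and for any $f \iin \Aut(\Updelta)$, choosing $w \iin \W(\Updelta)$ with $wf(D) = D$ yields $wf \iin \Aut(\DD)$ by (2) and hence $f \iin \W(\Updelta) \cdot \Aut(\DD)$. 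This gives the (corrected) semidirect product decomposition $\Aut(\Updelta) = \W(\Updelta) \rtimes \Aut(\DD)$.

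The most delicate point is the enumeration of roots as Weyl-images of (possibly doubled) simple roots used in part (1). This is classical for reduced irreducible systems, where $\W(\Updelta)$ acts transitively on roots of a common length. In the irreducible nonreduced case --- where only $(BC)_r$ occurs up to isomorphism --- one checks directly that the roots $\upalpha$ with $\upalpha/2 \iin \Updelta$ are precisely $2 \W(\Updelta) \ccdot \upalpha_r$ for the double-circle simple root $\upalpha_r$, while all other roots lie in $\W(\Updelta) \ccdot \Uplambda$; the general case reduces to these via Lemma \ref{rootlemma}.
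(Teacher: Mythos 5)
Your proposal is correct, and for the heart of the matter — showing the linear extension of a diagram isomorphism preserves root integers — you take a genuinely different route from the paper. The paper, after establishing $s(\Updelta)=\Updelta'$ via the Weyl-orbit description of roots (which you also use), proves $s$ is conformal on each irreducible component by comparing pairs of adjacent simple roots against the five rank-$2$ root systems, and then deduces root-integer preservation from conformality plus $s(\Updelta)=\Updelta'$. You instead exploit the intertwining $s \circ s_{\upalpha_i} = s_{s(\upalpha_i)} \circ s$ more fully: for arbitrary $\upalpha = w(\upgamma)$ with $\upgamma \in \Uplambda \cup (2\Uplambda \cap \Updelta)$, the identity $s w s^{-1} \in \W(\Updelta')$ together with orthogonality of Weyl elements gives $n_{s(\upbeta),\,s(\upalpha)} = n_{s(w^{-1}\upbeta),\,s(\upgamma)}$, and the latter equals $n_{w^{-1}\upbeta,\,\upgamma}$ because $n_{\,\cdot\,,\upgamma}$ is linear in its first argument (write $w^{-1}\upbeta$ in the simple-root basis and use the hypothesis on $\Uplambda$, with your scaling law absorbing the case $\upgamma = 2\upalpha_i$); a final application of Weyl-orthogonality returns $n_{\upbeta,\upalpha}$. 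This avoids the rank-$2$ case analysis entirely at the cost of not (re-)deriving conformality along the way, which the paper already has independently from Proposition \ref{conformal}. Your wording \enquote{reducing everything to the case of simple roots} slightly undersells what is happening — the Weyl reduction brings only \emph{one} of the two roots to $\Uplambda \cup (2\Uplambda \cap \Updelta)$, and the other is handled via linearity of the Cartan integer in its numerator slot — but the argument as sketched fills in correctly. Parts (2) and (3), and the closing remark on the Weyl-orbit description of roots in the nonreduced $(BC)_r$ case, match the paper's reasoning. You are also right that the statement of (3) contains a typo and should read $\Aut(\Updelta) = \W(\Updelta) \rtimes \Aut(\DD)$.
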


\begin{proof}
Let $s \colon \Uplambda \isoto \Uplambda'$ be a diagram isomorphism between $\DD$ and $\DD'$. Recall that the Weyl group of a root system is generated by the simple reflections with respect to any choice of simple roots: $\W(\Updelta)$ is generated by $\set{s_\upalpha \mid \upalpha \in \Uplambda}$ and the same is true for $\W(\Updelta')$. Since $s(\Uplambda) = \Uplambda'$, we deduce that $s \W(\Updelta) s^{-1} = \W(\Updelta')$. On the other hand, it is well known that every root in a root system is simple (or double of a simple one) for a suitable choice of a Weyl chamber (\cite[Prop. 2.62]{knapp}). Since the Weyl group acts transitively on the set of Weyl chambers, we deduce that $\Updelta = \W(\Updelta) \cdot (\Uplambda \cup (2\Uplambda \cap \Updelta))$ (the same is true for $\Updelta')$. We know that for any $\upalpha \in \Uplambda$, $2\upalpha$ is a root if and only if $2s(\upalpha)$ is one. Altogether, we have:
$$
s(\Updelta) = s(\W(\Updelta) \cdot (\Uplambda \cup (2\Uplambda \cap \Updelta))) = s \W(\Updelta) s^{-1} \cdot s(\Uplambda \cup (2\Uplambda \cap \Updelta)) = \W(\Updelta') \cdot (\Uplambda' \cup (2\Uplambda' \cap \Updelta')) = \Updelta',
$$
so $s$ satisfies condition (i) of Definition \ref{def1}. What for condition (ii), observe that $s$ provides a bijection between the connected components of $\DD$ and those of $\DD'$. Thus, for each $i \in \set{1, \ldots, k}$, there exists $j \in \set{1, \ldots, k'}$ (clearly, $k' = k$) such that $s(\DD_i) = \DD'_j$, which means that $s(\Uplambda_i) = \Uplambda'_j$ and thus $s(V_i) = V'_j$ and $s(\Updelta_i) = \Updelta'_j$. Take any $\upalpha \in \Uplambda_i$ and let $a = \frac{||s(\upalpha)||}{||\upalpha||}$. We want to show that for every other $\upbeta \in \Uplambda_i$, $\frac{||s(\upbeta)||}{||\upbeta||} = a$. Assume that $\upbeta$ is connected to $\upalpha$ by an edge. Consider the root systems $\Updelta \cap \vspan_\Rl \set{\upalpha, \upbeta}$ and $\Updelta' \cap \vspan_\Rl \set{s(\upalpha), s(\upbeta)}$. They are both of rank 2 and we have an isomorphism between their Dynkin diagrams provided by $s$. Since there are just five root systems of rank 2 up to isomorphism, it is straightforward to see that two such root systems with isomorphic Dynkin diagrams are isomorphic. What it means for us is that $\frac{||\upbeta||}{||\upalpha||} = \frac{||s(\upbeta)||}{||s(\upalpha)||}$, hence $\frac{||s(\upbeta)||}{||\upbeta||} = \frac{||s(\upalpha)||}{||\upalpha||} = a$. Since $\DD_i$ is connected, it follows by induction that $s$ increases the lengths of all simple roots in $\Uplambda_i$ by the same factor of $a$. As we already know that it preserves the Cartan integers, we deduce that it is conformal on $V_i$ ($a^{-1}s \colon V_i \isoto V'_j$ is an isometry). But this, together with condition (i) of Definition \ref{def1}, implies that it preserves the root integers between all the roots in $\Updelta_i$ (and not only between the simple ones). Since the root integers between roots lying in different components of $\Updelta$ are all zero, we see that $s$ is a root system isomorphism, which was to be proven. The second assertion in part (1) of the proposition is trivial.

Part (2) follows from part (1), as $s \in \Aut(\Updelta)$ preserves $D$ if and only if it preserves $\Uplambda$.

Part (3) hinges on the fact that $\W(\Updelta)$ acts simply transitively on the set of Weyl chambers. It is clear from (2) that $\W(\Updelta)$ and $\Aut(\DD)$ do not intersect. On the other hand, let $f \in \Aut(\Updelta)$. There exists $w \in \W(\Updelta)$ such that $w(f(D)) = D$. But then $s = wf$ fixes $D$ and thus lies in $\Aut(\DD)$. Therefore, we have a decomposition $f = w^{-1}s, w^{-1} \in \W(\Upsigma), s \in \Aut(\DD)$. This completes the proof of part (3).
\end{proof}

Finally, we discuss some aspects of the correspondence between reduced root systems and complex semisimple Lie algebras.\label{complexnotation} Let $\mk{g}$ be a (finite-dimensional) complex semisimple Lie algebra. Pick a Cartan subalgebra $\mk{h} \subset \mk{g}$. We have the corresponding set of roots $\Updelta \subset \mk{h}^*$ and the root space decomposition $\mk{g} = \mk{h} \oplus \bigoplus_{\upalpha \in \Updelta} \mk{g}_\upalpha$. The restriction of the Killing form $B$ of $\mk{g}$ to $\mk{h}$ is nondegenerate, so it induces a $\Cx$-linear isomorphism $\mk{h} \isoto \mk{h}^*$. Write $\mk{h}^*(\Rl) \subset \mk{h}^*$ for the real span of $\Updelta$ and $\mk{h}(\Rl) \subset \mk{h}$ for its preimage under $\mk{h} \isoto \mk{h}^*$. It is a standard fact that $\mk{h}(\Rl) = \set{h \in \mk{h} \mid f(h) \in \Rl \hspace{0.5em} \forall \hspace{1pt} f \in \mk{h}^*(\Rl)}$ (hence $\mk{h}^*(\Rl)$ is the real dual of $\mk{h}(\Rl)$), and we have $\mk{h} = \mk{h}(\Rl) \oplus_\Rl i\mk{h}(\Rl)$ and $\mk{h}^* = \mk{h}^*(\Rl) \oplus_\Rl i\mk{h}^*(\Rl)$. The restriction of $B$ to $\mk{h}(\Rl)$ is positive definite and we can carry it along the isomorphism $\mk{h}(\Rl) \isoto \mk{h}^*(\Rl)$ to an inner product on $\mk{h}^*(\Rl)$. This makes $(\mk{h}^*(\Rl), \Updelta)$ into a reduced root system. Note that this inner product on $\mk{h}^*(\Rl)$ is natural and does not require any additional choices, for it comes from the Killing form, which is fully determined by Lie algebra structure of $\mk{g}$.

Now we make a choice of positive roots $\Updelta^+ \subset \Updelta$ and let $\Uplambda = \set{\upalpha_1, \ldots, \upalpha_r}$ be the corresponding set of simple roots. Write $H_i \in \mk{h}(\Rl)$ for the preimage of $\upalpha_i$ under the isomorphism $\mk{h} \isoto \mk{h}^*$ and let $h_i = \frac{2}{||\upalpha_i||^2} H_i$. Finally, make a choice of canonical generators $e_i \in \mk{g}_{\upalpha_i}, f_i \in \mk{g}_{-\upalpha_i}$. It follows from the definition of $h_i$'s that
$$
[h_i, e_j] = n_{\upalpha_j \upalpha_i} e_j, \quad [h_i, f_j] = - n_{\upalpha_j \upalpha_i} f_j.
$$
The Isomorphism Theorem asserts that if $\mk{g}'$ is another complex semisimple Lie algebra with a fixed choice of $\mk{h}',\, \Uplambda' = \set{\upalpha'_1, \ldots, \upalpha'_r} \subset \Updelta',$ and $e'_i \in \mk{g}_{\upalpha'_i}, \, f'_i \in \mk{g}_{-\upalpha'_i}, \, 1 \leqslant i \leqslant r$, such that the Cartan matrices $A = (n_{\upalpha_i \upalpha_j})_{i,j=1}^r$ and $A' = (n_{\upalpha'_i \upalpha'_j})_{i,j=1}^r$ coincide, then there exists a unique Lie algebra isomorphism $\mk{g} \isoto \mk{g}'$ sending $h_i$ to $h'_i$, $e_i$ to $e'_i$, and $f_i$ to $f'_i$ for $1 \leqslant i \leqslant r$.

Let $F \colon \mk{g} \isoto \mk{g}'$ be a Lie algebra isomorphism mapping $\mk{h}$ onto $\mk{h}'$ and write $f = (\restr{F}{\mk{h}}^*)^{-1} \colon \mk{h}^* \isoto \mk{h}'^*$. It is a matter of simple computation that $f(\Updelta) = \Updelta'$ and for any $\upalpha \in \Updelta$, $F(\mk{g}_\upalpha) = \mk{g}'_{f(\upalpha)}$. In particular, we have $f(\mk{h}^*(\Rl)) = \mk{h}'^*(\Rl)$. We will slightly abuse the notation and use the same letter $f$ for the restriction $\restr{f}{\mk{h}^*(\Rl)} \colon \mk{h}^*(\Rl) \isoto \mk{h}'^*(\Rl)$. As $F$ respects the Killing forms of $\mk{g}$ and $\mk{g}'$, it follows that $f$ is an isometry and thus a root system isomorphism\footnote{This construction also shows that different choices of a Cartan subalgebra of $\mk{g}$ lead to isomorphic root systems, as any two Cartan subalgebras differ by an inner automorphism of $\mk{g}$. The map sending $\mk{g}$ to $(\mk{h}^*(\Rl), \Updelta)$ is a 1-to-1 correspondence between the isomorphism classes of complex semisimple Lie algebras on the one hand and of reduced root systems on the other.} between $\Updelta$ and $\Updelta'$. The Isomorphism Theorem ensures that every isomorphism $\mk{h}^*(\Rl) \isoto \mk{h}'^*(\Rl)$ between $\Updelta$ and $\Updelta'$ arises in this way (this fact is essentially equivalent to the Isomorphism Theorem and is proven directly in \cite[Th. 2.108]{knapp}). As a consequence, every such isomorphism is an isometry.

If we let $\mk{g}' = \mk{g}$ and $\mk{h}' = \mk{h}$, we get a surjective Lie group homomorphism $\boldsymbol{\Uppsi} \colon N_{\Aut(\mk{g})}(\mk{h}) \twoheadrightarrow \Aut(\Updelta), \, F \mapsto f$, where $N_{\Aut(\mk{g})}(\mk{h})$ is the normalizer of $\mk{h}$ in $\Aut(\mk{g})$.

For each $\upalpha \in \Updelta$, an automorphism $\upeta \in N_{\Aut(\mk{g})}(\mk{h})$ such that $\Uppsi(\upeta) = s_\upalpha$ can be constructed explicitly (here $s_\upalpha$ is the reflection of $\mk{h}^*(\Rl)$ in the hyperplane $\upalpha^\perp$). Namely, if $e_\upalpha \in \mk{g}_\upalpha$ and $f_\upalpha \in \mk{g}_{-\upalpha}$ are such that $[e_\upalpha, f_\upalpha] = h_\upalpha$ ($\Leftrightarrow B(e_\upalpha, f_\upalpha) = \frac{2}{||\upalpha||^2}$), then $\upeta = \exp(\ad \frac{\uppi}{2}(e_\upalpha - f_\upalpha)) \in N_{\Aut(\mk{g})}(\mk{h})$ and $\Uppsi(\upeta) = s_\upalpha$ (see \cite[p. 210]{goto_grosshans}). Observe also that the Isomorphism Theorem allows to construct a section $\Aut(\DD) \hookrightarrow N_{\Aut(\mk{g})}(\mk{h})$ of $\Uppsi$ over $\Aut(\DD)$: take $s \in \Aut(\DD)$ and send it to $\hat{s} \in N_{\Aut(\mk{g})}(\mk{h})$ given by $\hat{s}(h_i) = h_{s(i)}, \hat{s}(e_i) = e_{s(i)}, \hat{s}(f_i) = f_{s(i)}$ (here we think of $s$ as a permutation of $\set{1, \ldots, r} \simeq \set{\upalpha_1, \ldots, \upalpha_r}$).

Eventually, we make the following observation. Recall that a root system isomorphism is not in general an isometry (or even a conformal map). However, as we have seen, if $\mk{g}$ and $\mk{g}'$ are complex semisimple Lie algebras with Cartan subalgebras $\mk{h}$ and $\mk{h}'$, respectively, then every root system isomorphism $\mk{h}^*(\Rl) \isoto \mk{h}'^*(\Rl)$ between $\Updelta$ and $\Updelta'$ is an isometry. The reason for this is that the inner products on $\mk{h}^*(\Rl)$ and $\mk{h}'^*(\Rl)$ are "nice" -- they come from the Killing forms of $\mk{g}$ and $\mk{g}'$. In fact, we can make the following definition. Let $(V, \Updelta)$ be any reduced root system. Take a complex semisimple Lie algebra $\mk{g}$ and a Cartan subalgebra $\mk{h} \subset \mk{g}$ such that the root system $(\mk{h}^*(\Rl), \Updelta_\mk{g})$ is isomorphic to $(V, \Updelta)$. Pick any isomorphism $\upvarphi \colon V \isoto \mk{h}^*(\Rl)$ between $\Updelta$ and $\Updelta_\mk{g}$ and carry the inner product from $\mk{h}^*(\Rl)$ to $V$ along $\upvarphi$ (as we know from Proposition \ref{conformal}, it simply amounts to renormalizing the existing inner product on $V$ by some conformal factors on the irreducible components of $(V, \Updelta)$). Suppose we have another pair $(\mk{g}', \mk{h}')$ and an isomorphism $\upvarphi' \colon V \isoto \mk{h}'^*(\Rl)$ between $\Updelta$ and $\Updelta_{\mk{g}'}$. We claim that the inner product on $V$ pulled back from $\mk{h}'^*(\Rl)$ along $\upvarphi'$ is the same. Indeed, if we write $f = \upvarphi' \circ \upvarphi^{-1} \colon \mk{h}^*(\Rl) \isoto \mk{h}'^*(\Rl)$, then $f$ is an isomorphism between $\Updelta_\mk{g}$ and $\Updelta_{\mk{g}'}$. As we discussed above, each such isomorphism is an isometry, which proves the claim. We call the inner product on $V$ constructed above \textbf{Killing}. 

\begin{independentcorollary}
Let $(V, \Updelta)$ and $(V', \Updelta')$ be reduced root systems with Killing inner products. Then every isomorphism $V \isoto V'$ between $\Updelta$ and $\Updelta'$ is an isometry. In particular, $\Aut(\Updelta) \subseteq \O(V)$.
\end{independentcorollary}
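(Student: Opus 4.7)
The plan is to reduce the claim to the fact, established in the preceding discussion, that every root system isomorphism between the root systems of two complex semisimple Lie algebras (with the inner products coming from their Killing forms) is an isometry. The Killing inner product on $V$ was defined precisely so that some (hence every) reference isomorphism $\upvarphi \colon V \isoto \mk{h}^*(\Rl)$ to such a Lie-algebraic root system is an isometry, and the construction is well-defined. So I will use such isomorphisms as change-of-coordinates and transport the isometry property back to $V$ and $V'$.

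Concretely, first I pick complex semisimple Lie algebras $\mk{g}$ and $\mk{g}'$ with Cartan subalgebras $\mk{h} \subset \mk{g}$, $\mk{h}' \subset \mk{g}'$, together with root system isomorphisms
$$
\upvarphi \colon (V, \Updelta) \isoto (\mk{h}^*(\Rl), \Updelta_\mk{g}), \qquad \upvarphi' \colon (V', \Updelta') \isoto (\mk{h}'^*(\Rl), \Updelta_{\mk{g}'}).
$$
Existence is guaranteed by the correspondence between isomorphism classes of reduced root systems and of complex semisimple Lie algebras (every reduced root system arises from some pair $(\mk{g}, \mk{h})$). By the definition of the Killing inner product on $V$ and $V'$, both $\upvarphi$ and $\upvarphi'$ are isometries.

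Next, given the root system isomorphism $f \colon V \isoto V'$, I form the composition
$$
g \defeq \upvarphi' \circ f \circ \upvarphi^{-1} \colon \mk{h}^*(\Rl) \isoto \mk{h}'^*(\Rl).
$$
This is evidently a root system isomorphism between $\Updelta_\mk{g}$ and $\Updelta_{\mk{g}'}$. As recalled right before this corollary (and as justified via the Isomorphism Theorem, cf. \cite[Th. 2.108]{knapp}), every root system isomorphism between the root systems of two complex semisimple Lie algebras with respect to their Killing-form inner products is automatically an isometry. Hence $g$ is an isometry, and therefore so is $f = \upvarphi'^{-1} \circ g \circ \upvarphi$, being a composition of three isometries.

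For the \enquote{in particular} clause, I set $V' = V$ and $\Updelta' = \Updelta$ in what has just been shown: any $f \in \Aut(\Updelta)$ is, in particular, a root system isomorphism, and hence an isometry of $V$, which exactly means $f \in \O(V)$. There is no real obstacle here; the only subtlety is verifying that the Killing inner product is intrinsic to $(V, \Updelta)$ (i.e., independent of the choices of $(\mk{g}, \mk{h})$ and $\upvarphi$), but this has already been addressed in the paragraph preceding the corollary, so I would simply invoke it.
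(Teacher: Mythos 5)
Your proof is correct and is essentially the paper's own argument made explicit: the corollary is stated without a separate proof precisely because it follows from the preceding paragraph, where the Killing inner product is defined by transport from a Lie-algebraic pair $(\mk{g},\mk{h})$ and shown to be well-defined using the fact (via the Isomorphism Theorem, \cite[Th.\ 2.108]{knapp}) that every isomorphism $\mk{h}^*(\Rl) \isoto \mk{h}'^*(\Rl)$ is an isometry. Your conjugation $f \mapsto \upvarphi' \circ f \circ \upvarphi^{-1}$ is exactly the intended reduction.
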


\section{Real semisimple Lie algebras and their restricted root systems}\label{section_real}

In this section we discuss restricted root systems of real semisimple Lie algebras and prove the main results. See \cite[Ch. VI]{knapp} and \cite[\S 2,3]{onishchik} for a detailed exposition of the theory of real semisimple Lie algebras.

Let $\mk{g}$ be a (finite-dimensional) real semisimple Lie algebra. Let us fix a Cartan involution $\uptheta$ of $\mk{g}$ and write $\mk{g} = \mk{k} \oplus \mk{p}$ for the corresponding Cartan decomposition. We also pick a maximal abelian subspace $\mk{a}$ of $\mk{p}$. Having this fixed, we have the set of restricted roots $\Upsigma \subseteq \mk{a}^*$ and the restricted root space decomposition $\mk{g} = \mk{g}_0 \oplus \bigoplus_{\upalpha \in \Upsigma} \mk{g}_\upalpha$, where $\mk{g}_0 = Z_\mk{g}(\mk{a}) = \mk{k}_0 \oplus \mk{a}$ and $\mk{k}_0 = Z_\mk{k}(\mk{a}) = N_\mk{k}(\mk{a})$. Note also that $\uptheta(\mk{g}_\upalpha) = \mk{g}_{-\upalpha}$ for any $\upalpha \in \Upsigma$. If we write $B$ for the Killing form of $\mk{g}$, then $B_\uptheta(X,Y) = -B(X,\uptheta Y)$ is an inner product on $\mk{g}$ (which we choose as out default inner product on $\mk{g}$ unless otherwise specified). The summands of the restricted root space decomposition are all pairwise orthogonal with respect to $B_\uptheta$, and $\mk{k}$ and $\mk{p}$ are mutually orthogonal with respect to both $B$ and $B_\uptheta$. Note that $B_\uptheta$ coincides with $B$ on $\mk{p}$ and equals $-B$ on $\mk{k}$. In a standard fashion, $\restr{B_\uptheta}{\mk{a} \times \mk{a}} = \restr{B}{\mk{a} \times \mk{a}}$ induces an isomorphism $\mk{a} \isoto \mk{a}^*$, and we carry it along this isomorphism to an inner product on $\mk{a}^*$. This way, $(\mk{a}^*, \Upsigma)$ becomes a root system, called the restricted root system of $\mk{g}$. There are three main differences with the complex semisimple case (apart from the obvious difference in how the root system is constructed):

\begin{enumerate}
    \item $\Upsigma$ does not have to be reduced.
    \item $\Upsigma$ loses all information about the compact ideals of $\mk{g}$.
    \item The dimensions of the restricted root subspaces $\mk{g}_\upalpha$ do not have to be equal to 1.
\end{enumerate}

Let us address these points individually. To begin with, (1) is not really an issue, since we know the classification of all -- not necessarily reduced -- root systems up to isomorphism. As we mentioned in Section \ref{section_complex}, the root systems $(BC)_r, \, r \geqslant 1$, exhaust the list of all irreducible nonreduced root systems up to isomorphism.

Regarding (2), we make the following observation. First of all, it is a standard fact about Cartan involutions that $\mk{k}$ is a maximal compact subalgebra of $\mk{g}$. This immediately implies that
\begin{equation*}
\mk{g} \hspace{4pt} \text{is compact}\footnote{In this case, $\uptheta = \Id_\mk{g}$ is the only Cartan involution on $\mk{g}$.} \, \Leftrightarrow \; \mk{p} = \set{0} \; \Leftrightarrow \; \mk{a} = \set{0} \; \Leftrightarrow \; \Upsigma = \varnothing.
\end{equation*}
Now, let $\mk{g} = \bigoplus_{i=1}^k \mk{g}_i$ be the decomposition of $\mk{g}$ into the sum of its simple ideals. Necessarily, $\uptheta = \uptheta_1 + \cdots + \uptheta_k$, where each $\uptheta_i$ is a Cartan involution on $\mk{g}_i$. We then have $\mk{k} = \bigoplus_{i=1}^k \mk{k}_i,\, \mk{p} = \bigoplus_{i=1}^k \mk{p}_i$, where $\mk{k}_i = \mk{k} \cap \mk{g}_i,\, \mk{p}_i = \mk{p} \cap \mk{g}_i$. Furthermore, we necessarily have $\mk{a} = \bigoplus_{i=1}^k \mk{a}_i,\, \mk{a}_i = \mk{a} \cap \mk{p}_i$, and hence $\mk{a}^* = \bigoplus_{i=1}^k \mk{a}_i^*,\, \Upsigma = \bigsqcup_{i=1}^k \Upsigma_i$, where $\Upsigma_i = \Upsigma \cap \mk{a}_i^*$ is the restricted root system of $\mk{g}_i$. Observe also that the decomposition $\mk{g} = \bigoplus_{i=1}^k \mk{g}_i$ is orthogonal with respect to the inner product $B_\uptheta$, and $B_\uptheta = B_{\uptheta_1}^1 + \cdots + B_{\uptheta_k}^k$, where $B^i$ is the Killing form of $\mk{g}_i$. Consequently, the decomposition $\mk{a}^* = \bigoplus_{i=1}^k \mk{a}_i^*$ is orthogonal as well. It is fairly easy to see from the properties of the restricted root space decomposition that a simple Lie algebra must have an irreducible restricted root system. We deduce that $\Upsigma = \Upsigma_1 \sqcup \ldots \sqcup \Upsigma_k$ is the decomposition of $\Upsigma$ into its irreducible components (except for the fact that some components $\Upsigma_i$ might be empty -- precisely when the correspondent simple ideal $\mk{g}_i$ is compact). We see that it is impossible to recover information about the compact ideals of $\mk{g}$ from $\Upsigma$. We also arrive at the following

\begin{independentcorollary}
Let $\mk{g} = \bigoplus_{i=1}^k \mk{g}_i$ as above. Then $\Upsigma$ is irreducible if and only if at most one simple ideal $\mk{g}_i$ is noncompact. In particular, if $\mk{g}$ has no nonzero compact ideals and $\Upsigma$ is irreducible, then $\mk{g}$ is simple.
\end{independentcorollary}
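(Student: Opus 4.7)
The plan is to apply directly the decomposition $\Upsigma = \bigsqcup_{i=1}^k \Upsigma_i$ established in the paragraph immediately preceding this corollary, together with the two facts also established there: (a) each $\Upsigma_i$, being the restricted root system of the simple Lie algebra $\mk{g}_i$, is either irreducible or empty; and (b) $\Upsigma_i = \varnothing$ if and only if $\mk{g}_i$ is compact. Setting $I = \set{i \mid \mk{g}_i \text{ is noncompact}}$, we therefore obtain that $\Upsigma = \bigsqcup_{i \in I} \Upsigma_i$ is an orthogonal decomposition into nonempty irreducible pieces, and by the uniqueness clause of Lemma \ref{rootlemma} this must coincide with the decomposition of $\Upsigma$ into its irreducible components.

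The first assertion then reads off immediately: $\Upsigma$ is irreducible if and only if the above decomposition has at most one summand, which happens precisely when at most one $\mk{g}_i$ is noncompact. (The degenerate case $I = \varnothing$ corresponds to $\Upsigma = \varnothing$ with every $\mk{g}_i$ compact, which one customarily subsumes under ``irreducible'' by convention.)

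For the ``in particular'' clause, the hypothesis that $\mk{g}$ has no nonzero compact ideals means $I = \set{1, \ldots, k}$; combining this with the first assertion yields $k \leqslant 1$. Since $\Upsigma$ being irreducible here is understood to imply $\Upsigma \neq \varnothing$ (otherwise the conclusion would be vacuous for $\mk{g} = 0$), we must have $k = 1$, so $\mk{g} = \mk{g}_1$ is simple.

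There is essentially no obstacle in this argument—all the substantive work was already done in the discussion preceding the statement, and the corollary amounts to bookkeeping based on the orthogonal decomposition $\mk{a}^* = \bigoplus_i \mk{a}_i^*$ and the uniqueness of irreducible decomposition for root systems. The only mild subtlety is the convention regarding the empty root system, which is readily resolved as above.
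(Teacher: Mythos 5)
Your proof is correct and is precisely the argument the paper intends: the corollary is stated without a separate proof because the orthogonal decomposition $\Upsigma = \bigsqcup_{i} \Upsigma_i$ into pieces that are each irreducible or empty (empty iff $\mk{g}_i$ is compact), together with the uniqueness from Lemma~\ref{rootlemma}, immediately yields both assertions. Your remark about the convention for the empty root system is a reasonable aside but does not affect the substance.
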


Let us now address (3) -- arguably, the most important difference with the complex semisimple case. We make use of the fact that $\dim_\Rl(\mk{g}_\upalpha)$ might not be equal to $1$ and call it the \textbf{multiplicity} of the root $\upalpha$ (and denote it by $\boldsymbol{\mathrm{mult}(\upalpha)}$). The idea is that we incorporate this information into the root system $\Upsigma$ itself. To this end, we make the following definition: a \textbf{weighted root system} is a root system in which every root is assigned a positive integer, called the multiplicity of the root\footnote{In this paper, we are using the letter $\Updelta$ for regular root systems and reserve $\Upsigma$ for weighted ones.}. The assignment of multiplicities to the roots in $\Upsigma$ is far from random, but since real semisimple Lie algebras seem to be the only context where root multiplicities arise, we do not make any additional assumptions in the definition. Since $\uptheta(\mk{g}_\upalpha) = \mk{g}_{-\upalpha}$, we know that $\mathrm{mult}(\upalpha) = \mathrm{mult}(-\upalpha)$ for any $\upalpha \in \Upsigma$. Below we will see that much more is true (see Theorem \ref{maintheorem}(2) and Proposition \ref{decompositions}(5)).

\begin{definition}\label{def3}
Let $\mk{g}'$ be another real semisimple Lie algebra with a Cartan involution $\uptheta'$ and a maximal abelian subspace $\mk{a}' \subseteq \mk{p}'$ fixed, and let $\Upsigma' \subseteq \mk{a}'^*$ be the corresponding restricted root system. We call a root system isomorphism $f \colon \mk{a}^* \isoto \mk{a}'^*$ between $\Upsigma$ and $\Upsigma'$ \textbf{weighted} if it preserves the root multiplicities: $\mathrm{mult}(f(\upalpha)) = \mathrm{mult}(\upalpha)$ for every $\upalpha \in \Upsigma$. We say that $(\mk{a}^*, \Upsigma)$ and $(\mk{a}'^*, \Upsigma')$ are \textbf{weighted-isomorphic} if there exists a weighted isomorphism between them. Finally, we call a weighted isomorphism $\mk{a}^* \isoto \mk{a}^*$ from $\Upsigma$ to itself a \textbf{weighted automorphism of} $\boldsymbol{(\mk{a}^*, \Upsigma)}$ (or \textbf{of} $\boldsymbol{\Upsigma}$, for short). The group of all weighted automorphisms of $(\mk{a}^*, \Upsigma)$ will be denoted by $\boldsymbol{\Aut^\mr{w}(\Upsigma)} \subseteq \Aut(\Upsigma)$.
\end{definition}

We want to relate weighted root system isomorphisms to Lie algebra isomorphisms. Let $\mk{g}$ and $\mk{g}'$ be as above and suppose that $F \colon \mk{g} \isoto \mk{g}'$ is an isomorphism such that $F \circ \uptheta = \uptheta' \circ F$ (hence $F(\mk{k}) = \mk{k}', F(\mk{p}) = \mk{p}'$) and $F(\mk{a}) = \mk{a}'$. Consider $\restr{F}{\mk{a}} \colon \mk{a} \isoto \mk{a}'$ and define $f = (\restr{F}{\mk{a}}^*)^{-1} \colon \mk{a}^* \isoto \mk{a}'^*$. Similarly to what we did in the complex semisimple case, it is easy to check that $f(\Upsigma) = \Upsigma'$ and $F(\mk{g}_\upalpha) = \mk{g}'_{f(\upalpha)}$ for every $\upalpha \in \Upsigma$. It is also clear that $F$ is an isometry with respect to the inner products $B_\uptheta$ and $B'_{\uptheta'}$, so $f$ is an isometry as well. All this implies that $f$ is a weighted isomorphism between $\Upsigma$ and $\Upsigma'$.

We can apply this construction to the situation when $\mk{g}' = \mk{g}, \uptheta' = \uptheta,$ and $\mk{a}' = \mk{a}$. Consider the Lie group $\Aut(\mk{g})$. We have a distinguished element of this group fixed, namely the Cartan involution $\uptheta \in \Aut(\mk{g})$. Consider the closed subgroup $K = Z_{\Aut(\mk{g})}(\uptheta)$ of automorphisms that commute with $\uptheta$. It can also be described as the fixed point subgroup $\Aut(\mk{g})^\uptheta$ of the involutive automorphism $C_\uptheta \colon \upeta \mapsto \uptheta \upeta \uptheta^{-1} = \uptheta \upeta \uptheta$ of $\Aut(\mk{g})$ (strictly speaking, we should write $\Aut(\mk{g})^{C_\uptheta}$, not $\Aut(\mk{g})^\uptheta$, but this is a common abuse of notation). Yet another way to describe it is  $K = N_{\Aut(\mk{g})}(\mk{k})$, the normalizer of $\mk{k}$ in $\Aut(\mk{g})$. Indeed, an automorphism $\upeta$ commutes with $\uptheta$ if and only if it respects the Cartan decomposition $\mk{g} = \mk{k} \oplus \mk{p}$. Since every automorphism of $\mk{g}$ is orthogonal with respect to the Killing form $B$ and $\mk{p}$ is the orthogonal complement of $\mk{k}$ with respect to $B$, $\upeta(\mk{k}) = \mk{k}$ automatically implies $\upeta(\mk{p}) = \mk{p}$. As we will see later, $K$ is a maximal compact subgroup of $\Aut(\mk{g})$ (see Corollary \ref{maximal_compact}). Since $\mk{g}$ is semisimple, we can identify it with the Lie algebra of $\Aut(\mk{g})$ by means of the adjoint representation: $\ad \colon \mk{g} \isoto \Der(\mk{g}) = \Lie(\Aut(\mk{g}))$. Under this identification, we have
\begin{align*}
    \Lie(K) &= \Der(\mk{g})^{\Ad(\uptheta)} \\
    &\cong \set{X \in \mk{g} \mid \ad(X) \circ \uptheta = \uptheta \circ \ad(X)} \\
    &= \set{X \in \mk{g} \mid \ad(X) = \ad(\uptheta(X))} \\
    &= \set{X \in \mk{g} \mid \uptheta(X) = X} = \mk{k}.
\end{align*}
One can show that $K = \Aut(\mk{g}) \cap \O_{B_\uptheta}(\mk{g})$ (see \cite[Lem. 2.2]{gundogan}). We are especially interested in the subgroup $N_K(\mk{a})$ of $K$. According to the previous paragraph, we can define a map $\Upomega \colon N_K(\mk{a}) \to \Aut^\mr{w}(\Upsigma), \upvarphi \mapsto (\restr{\upvarphi}{\mk{a}}^*)^{-1}$. This is easily seen to be a Lie group homomorphism.

Using some standard facts from the theory of real semisimple Lie algebras, we can prove the following:

\begin{proposition}\label{meow}
If $\mk{g}$ and $\mk{g}'$ are isomorphic real semisimple Lie algebras, then their restricted root systems are weighted-isomorphic for any choices of $\uptheta, \uptheta', \mk{a},$ and $\mk{a}'$. In particular, the restricted root system of $\mk{g}$ does not depend on the choice of $\uptheta$ and $\mk{a}$ (up to a weighted isomorphism).
\end{proposition}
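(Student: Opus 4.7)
The plan is to reduce Proposition~\ref{meow} to the construction spelled out in the paragraph just above its statement: any Lie algebra isomorphism $F \colon \mk{g} \isoto \mk{g}'$ satisfying $F \circ \uptheta = \uptheta' \circ F$ and $F(\mk{a}) = \mk{a}'$ automatically yields a weighted isomorphism $f = (\restr{F}{\mk{a}}^*)^{-1} \colon \mk{a}^* \isoto \mk{a}'^*$ between $\Upsigma$ and $\Upsigma'$. Accordingly, it suffices to show that an arbitrary Lie algebra isomorphism $F_0 \colon \mk{g} \isoto \mk{g}'$ can be adjusted, by composing with suitable elements of $\Aut(\mk{g}')$, into one satisfying both of these conditions.

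The adjustment proceeds in two stages. First I would transport $\uptheta$ forward to the Cartan involution $F_0 \uptheta F_0^{-1}$ of $\mk{g}'$; by the classical theorem on conjugacy of Cartan involutions (see, e.g., \cite[Ch.~VI]{knapp}), there exists an inner automorphism $\upvarphi_1 \in \Int(\mk{g}')$ with $\upvarphi_1 \circ F_0 \uptheta F_0^{-1} \circ \upvarphi_1^{-1} = \uptheta'$, so $F_1 = \upvarphi_1 \circ F_0$ intertwines $\uptheta$ and $\uptheta'$ and in particular sends $\mk{p}$ onto $\mk{p}'$. Second, $F_1(\mk{a})$ is a maximal abelian subspace of $\mk{p}'$, so by the conjugacy theorem for such subspaces there is an element $\upvarphi_2$ in the connected subgroup of $\Int(\mk{g}')$ with Lie algebra $\ad \mk{k}'$ such that $\upvarphi_2(F_1(\mk{a})) = \mk{a}'$. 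Since $\upvarphi_2$ has the form $e^{\ad X}$ with $X \in \mk{k}'$ and $\uptheta'$ fixes $\mk{k}'$ pointwise, $\upvarphi_2$ commutes with $\uptheta'$; hence $F = \upvarphi_2 \circ F_1$ continues to intertwine $\uptheta$ and $\uptheta'$ and now also maps $\mk{a}$ onto $\mk{a}'$. Feeding $F$ into the construction preceding the proposition produces the desired weighted isomorphism.

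The ``in particular'' statement is then the special case $\mk{g}' = \mk{g}$, $F_0 = \Id_\mk{g}$ with possibly different choices $(\uptheta', \mk{a}')$. No single step is a genuine obstacle; the proof is purely a two-stage correction that offloads the substantive work onto the two classical conjugacy theorems -- for Cartan involutions and for maximal abelian subspaces of $\mk{p}$. The only mild subtlety is the need to choose $\upvarphi_2$ inside the $\uptheta'$-fixed subgroup of $\Int(\mk{g}')$, so that the second correction does not spoil the intertwining property achieved by the first.
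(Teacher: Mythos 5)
Your proof is correct and follows the same two-stage conjugation argument as the paper: first conjugate by an inner automorphism to match the Cartan involutions, then conjugate by an element of the $\uptheta'$-fixed connected subgroup to match the maximal abelian subspaces, observing that the second step preserves the intertwining achieved by the first. The only cosmetic imprecision is the claim that $\upvarphi_2$ has the form $e^{\ad X}$ for a single $X \in \mk{k}'$ -- an element of a connected Lie group need not be a single exponential -- but since every generator $e^{\ad X}$ with $X \in \mk{k}'$ commutes with $\uptheta'$, so does any finite product of them, and the argument goes through unchanged.
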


\begin{proof}
Let $F \colon \mk{g} \isoto \mk{g}'$ be any isomorphism. Since any two Cartan involutions on $\mk{g}$ are conjugate by an inner automorphism, we may assume without loss of generality that $F \circ \uptheta = \uptheta' \circ F$. Moreover, any two maximal abelian subspaces of $\mk{p}$ differ by an element of $K$ (even of $K^0$; see \cite[Th. 6.51]{knapp}). Since, by definition, any element of $K$ commutes with $\uptheta$, we may assume, in addition, that $F(\mk{a}) = \mk{a}'$. But now the above construction implies that $F$ induces a weighted isomorphism between $\Upsigma$ and $\Upsigma'$.
\end{proof}

In a similar vein to the complex semisimple case, one can show that $\W(\Upsigma) \subseteq \Im(\Upomega)$. In fact, if $\upalpha \in \Upsigma$ and $X \in \mk{g}_\upalpha$ is a vector of length $\frac{\sqrt{2}}{||\upalpha||}$, then $\exp(\ad \frac{\uppi}{2}(X + \uptheta X)) \in N_K(\mk{a})$ and $\Upomega(\exp(\ad \frac{\uppi}{2}(X + \uptheta X)))$ is the reflection $s_\upalpha$ of $\mk{a}^*$ in the hyperplane $\upalpha^\perp$ (see \cite[Prop. 6.52]{knapp} for a proof). As a consequence, we see that $\W(\Upsigma) \subseteq \Aut^\mr{w}(\Upsigma)$. This means that restricted roots lying in the same orbit of $\W(\Upsigma)$ have the same multiplicities.

We make a choice of positive roots $\Upsigma^+ \subseteq \Upsigma$ and write $\Uplambda \subseteq \Upsigma^+$ for the subset of simple roots and $D \subseteq \mk{a}^*$ for the positive Weyl chamber. The sum $\mk{n} = \bigoplus_{\upalpha \in \Upsigma^+} \mk{g}_\upalpha$ is a nilpotent subalgebra of $\mk{g}$, and we have the Iwasawa decomposition $\mk{g} = \mk{k} \oplus \mk{a} \oplus \mk{n}$. Write $\DD$ for the Dynkin diagram of $\Upsigma$. Just like we did with $\Upsigma$, we say that $\DD$ is a \textbf{weighted Dynkin diagram} when we want to stress that each vertex has a positive integer assigned to it, namely its multiplicity. If a vertex is doubled, i.e. corresponds to a simple root $\upalpha$ such that $2\upalpha$ is also a root, we assign to it not just one number, but the ordered pair $(\mathrm{mult}(\upalpha), \mathrm{mult}(2\upalpha))$.

\begin{definition}\label{def4}
Let $\mk{g}'$ be another real semisimple Lie algebra with a Cartan involution $\uptheta'$, a maximal abelian subspace $\mk{a}' \subseteq \mk{p}'$, and a choice of positive roots $\Upsigma'^+ \subseteq \Upsigma'$ fixed. We call a diagram isomorphism $s \colon \Uplambda \isoto \Uplambda'$ between $\DD$ and $\DD'$ \textbf{weighted} if it preserves the vertex weights: $\mathrm{mult}(s(\upalpha)) = \mathrm{mult}(\upalpha)$ (and $\mathrm{mult}(2s(\upalpha)) = \mathrm{mult}(2\upalpha)$ in case $2\upalpha$ is a root) for every $\upalpha \in \Uplambda$. We say that $\DD$ and $\DD'$ are \textbf{weighted-isomorphic} if there exists a weighted isomorphism between them. Finally, we call a weighted isomorphism $\Uplambda \isoto \Uplambda$ from $\DD$ to itself a \textbf{weighted automorphism of} $\boldsymbol{\DD}$. The group of all weighted automorphisms of $\DD$ will be denoted by $\boldsymbol{\Aut^\mr{w}(\DD)} \subseteq \Aut(\DD)$.
\end{definition}

Since $\W(\Upsigma) \subseteq \Aut^\mr{w}(\Upsigma)$ and $\W(\Upsigma)$ acts transitively on the set of Weyl chambers in $\mk{a}^*$, we immediately get the following:

\begin{proposition}\label{beow}
Let $\mk{g}$ and $\mk{g}'$ be real semisimple Lie algebras with restricted root systems $\Upsigma$ and $\Upsigma'$, respectively. If $\Upsigma$ and $\Upsigma'$ are weighted-isomorphic (in particular, if $\mk{g}$ and $\mk{g}'$ are isomorphic), then their Dynkin diagrams are weighted-isomorphic as well for any choices of $\Upsigma^+$ and $\Upsigma'^+$. In particular, the Dynkin diagram of $\Upsigma$ does not depend on the choice of $\Upsigma^+$ (up to a weighted isomorphism).
\end{proposition}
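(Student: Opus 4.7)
The plan is to use the weighted isomorphism between $\Upsigma$ and $\Upsigma'$ to produce a weighted isomorphism on the level of simple roots, after correcting for the fact that a weighted isomorphism $f \colon \mk{a}^* \isoto \mk{a}'^*$ need not send the chosen positive system $\Upsigma^+$ to $\Upsigma'^+$.

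More precisely, let $f \colon \mk{a}^* \isoto \mk{a}'^*$ be a weighted isomorphism between $\Upsigma$ and $\Upsigma'$. Then $f(\Upsigma^+)$ is some system of positive roots in $\Upsigma'$, corresponding to some Weyl chamber $f(D) \subseteq \mk{a}'^*$. Since $\W(\Upsigma')$ acts transitively on the set of Weyl chambers in $\mk{a}'^*$, there exists $w' \in \W(\Upsigma')$ such that $w'(f(D)) = D'$, or equivalently $w'(f(\Upsigma^+)) = \Upsigma'^+$, and therefore $w'(f(\Uplambda)) = \Uplambda'$. Because $\W(\Upsigma') \subseteq \Aut^\mr{w}(\Upsigma')$ and the composition of two weighted root system isomorphisms is clearly a weighted isomorphism, the map $g \defeq w' \circ f \colon \mk{a}^* \isoto \mk{a}'^*$ is a weighted isomorphism between $\Upsigma$ and $\Upsigma'$ that sends $\Uplambda$ to $\Uplambda'$.

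Now I invoke Proposition \ref{semidirect1}(1): since $g$ is a root system isomorphism sending $\Uplambda$ onto $\Uplambda'$, its restriction $s \defeq \restr{g}{\Uplambda} \colon \Uplambda \isoto \Uplambda'$ is a Dynkin diagram isomorphism between $\DD$ and $\DD'$. It remains to verify that $s$ is weighted. This is immediate from the fact that $g$ preserves root multiplicities: for every $\upalpha \iin \Uplambda$ one has $\mathrm{mult}(s(\upalpha)) = \mathrm{mult}(g(\upalpha)) = \mathrm{mult}(\upalpha)$, and if $2\upalpha \iin \Upsigma$, then also $2s(\upalpha) = g(2\upalpha) \iin \Upsigma'$ and $\mathrm{mult}(2s(\upalpha)) = \mathrm{mult}(2\upalpha)$. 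Thus $s$ is a weighted diagram isomorphism between $\DD$ and $\DD'$.

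For the final assertion, apply the construction just given in the special case $\mk{g}' = \mk{g}$, $\uptheta' = \uptheta$, $\mk{a}' = \mk{a}$, $\Upsigma' = \Upsigma$, and $f = \Id_{\mk{a}^*}$, with any two choices $\Upsigma^+$ and $\Upsigma'^+$ of positive systems. The resulting element $w' \in \W(\Upsigma)$ furnishes a weighted diagram isomorphism between the two Dynkin diagrams obtained from these two choices. There is no real obstacle here: the entire argument is a packaging of the transitivity of the Weyl group action on chambers together with the inclusion $\W(\Upsigma) \subseteq \Aut^\mr{w}(\Upsigma)$ already noted in the text, and Proposition \ref{semidirect1}(1) providing the link between root system and diagram isomorphisms.
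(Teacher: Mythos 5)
Your argument is exactly the one the paper has in mind: compose $f$ with a Weyl element (using $\W(\Upsigma') \subseteq \Aut^\mr{w}(\Upsigma')$ and transitivity on Weyl chambers) to align the simple systems, then apply Proposition~\ref{semidirect1}(1) and read off the multiplicities. The paper simply states this "immediately" without writing it out; your proposal is a correct unpacking of the same reasoning.
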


Using the results of Section \ref{section_complex}, we can prove the converse to Proposition \ref{beow}.

\begin{proposition}
Let $\mk{g}$ and $\mk{g}'$ be real semisimple Lie algebras with restricted root systems $\Upsigma$ and $\Upsigma'$ and Dynkin diagrams $\DD$ and $\DD'$, respectively. If $\DD$ and $\DD'$ are weighted-isomorphic, then so are $\Upsigma$ and $\Upsigma'$. More specifically, if $s \colon \Uplambda \isoto \Uplambda'$ is a weighted isomorphism between $\DD$ and $\DD'$, then its unique linear extension $s \colon \mk{a} \isoto \mk{a}^*$ is a weighted isomorphism between $\Upsigma$ and $\Upsigma'$. In particular, $\Aut^\mr{w}(\DD) \subseteq \Aut^\mr{w}(\Upsigma)$.
\end{proposition}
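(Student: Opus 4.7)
The plan is to invoke Proposition \ref{semidirect1}(1) to reduce the statement to a purely multiplicative assertion and then use two facts already established in the preceding discussion. The proposition tells us that any diagram isomorphism in the sense of Definition \ref{def2} extends linearly to a root system isomorphism; since a weighted diagram isomorphism is in particular a diagram isomorphism, the linear extension $s \colon \mk{a}^* \isoto \mk{a}'^*$ is automatically a root system isomorphism between $\Upsigma$ and $\Upsigma'$ that sends $\Uplambda$ to $\Uplambda'$. It remains to verify that $s$ preserves root multiplicities.

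The two facts I will use are: (a) multiplicities are constant along the $\W(\Upsigma)$-orbits in $\Upsigma$ (established in the paragraph following Proposition \ref{meow} via the explicit lifts $\exp(\ad\tfrac{\uppi}{2}(X + \uptheta X)) \in N_K(\mk{a})$ of the simple reflections), and analogously for $\Upsigma'$; and (b) as noted in the proof of Proposition \ref{semidirect1}, every root in $\Upsigma$ lies in the $\W(\Upsigma)$-orbit of an element of $\Uplambda \cup (2\Uplambda \cap \Upsigma)$. Additionally, inspecting the proof of Proposition \ref{semidirect1}(1) yields $s\W(\Upsigma)s^{-1} = \W(\Upsigma')$ and $s(\Uplambda \cup (2\Uplambda \cap \Upsigma)) = \Uplambda' \cup (2\Uplambda' \cap \Upsigma')$ (the latter because $s$ preserves doubled vertices by Definition \ref{def2}).

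Given $\upbeta \in \Upsigma$, write $\upbeta = w(\upgamma)$ with $w \in \W(\Upsigma)$ and $\upgamma \in \Uplambda \cup (2\Uplambda \cap \Upsigma)$. Then $s(\upbeta) = (sws^{-1})(s(\upgamma))$, and since $sws^{-1} \in \W(\Upsigma')$ with $s(\upgamma) \in \Uplambda' \cup (2\Uplambda' \cap \Upsigma')$, fact (a) applied to $\Upsigma'$ gives $\mr{mult}(s(\upbeta)) = \mr{mult}(s(\upgamma))$. Now $s$ is a weighted diagram isomorphism, so by definition $\mr{mult}(s(\upalpha)) = \mr{mult}(\upalpha)$ for every $\upalpha \in \Uplambda$ and, whenever $2\upalpha \in \Upsigma$, also $\mr{mult}(2s(\upalpha)) = \mr{mult}(2\upalpha)$; combining this with linearity $s(2\upalpha) = 2s(\upalpha)$, we obtain $\mr{mult}(s(\upgamma)) = \mr{mult}(\upgamma)$ in both cases. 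Finally, fact (a) applied to $\Upsigma$ gives $\mr{mult}(\upgamma) = \mr{mult}(\upbeta)$, so $\mr{mult}(s(\upbeta)) = \mr{mult}(\upbeta)$, and $s$ is weighted. Specializing to $\mk{g}' = \mk{g}$ gives $\Aut^\mr{w}(\DD) \subseteq \Aut^\mr{w}(\Upsigma)$.

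There is no genuine obstacle here; the argument is almost formal once Proposition \ref{semidirect1} is in place. The only point requiring a little attention is the doubled-vertex case $\upgamma = 2\upalpha$, but this is precisely the reason Definition \ref{def4} records the ordered pair $(\mr{mult}(\upalpha), \mr{mult}(2\upalpha))$ on doubled vertices, so it is handled by the definition itself rather than by any new lemma.
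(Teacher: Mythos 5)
Your proof is correct and follows essentially the same route as the paper: reduce to preserving multiplicities via Proposition \ref{semidirect1}(1), then transport any root to $\Uplambda \cup (2\Uplambda \cap \Upsigma)$ by the Weyl group, use $s\W(\Upsigma)s^{-1} = \W(\Upsigma')$ together with Weyl-invariance of multiplicities, and finish with the weighted-diagram hypothesis on simple roots and their doubles. The only difference is cosmetic (you spell out the doubled-vertex case and the two Weyl-orbit steps separately), so nothing further is needed.
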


\begin{proof}
We already know from Proposition \ref{semidirect1}(1) that $s \colon \mk{a} \isoto \mk{a}^*$ is an isomorphism between $\Upsigma$ and $\Upsigma'$, so we only need to prove that it preserves the root multiplicities. We also know from the proof of Proposition \ref{semidirect1} that $s \W(\Upsigma) s^{-1} = \W(\Upsigma')$ and $\W(\Upsigma) \cdot (\Uplambda \cup (2\Uplambda \cap \Upsigma)) = \Upsigma$. Let $\upalpha \in \Upsigma$ be any root. Take $w \in \W(\Upsigma)$ such that $w(\upalpha) \in \Uplambda \cup (2\Uplambda \cap \Upsigma)$, and write $w' = sws^{-1} \in \W(\Upsigma')$. We have:
$$
s(\upalpha) = sw^{-1}(w(\upalpha)) = w'^{-1}s(w(\upalpha)).
$$
Since $\mathrm{mult}(s(w(\upalpha))) = \mathrm{mult}(w(\upalpha))$ and elements of the Weyl group preserve root multiplicities, we get $\mathrm{mult}(s(\upalpha)) = \mathrm{mult}(\upalpha)$, so $s$ is a weighted root system isomorphism.
\end{proof}

Since both $\W(\Upsigma)$ and $\Aut^\mr{w}(\DD)$ are contained in $\Aut^\mr{w}(\Upsigma)$ and $\W(\Upsigma)$ acts transitively on the set of Weyl chambers, we immediately get the following weighted analog of Proposition \ref{semidirect1}(3):

\begin{corollary}
$\Aut^\mr{w}(\Upsigma) = \W(\Upsigma) \rtimes \Aut^\mr{w}(\DD)$.
\end{corollary}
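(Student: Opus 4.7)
The plan is to obtain this as a direct restriction of the non-weighted decomposition $\Aut(\Upsigma) = \W(\Upsigma) \rtimes \Aut(\DD)$ given by Proposition~\ref{semidirect1}(3), to the subgroup $\Aut^\mr{w}(\Upsigma) \subseteq \Aut(\Upsigma)$. Three things have to be verified: that the factors $\W(\Upsigma)$ and $\Aut^\mr{w}(\DD)$ sit inside $\Aut^\mr{w}(\Upsigma)$ and intersect trivially, that $\W(\Upsigma)$ is normal there, and that together they generate all of $\Aut^\mr{w}(\Upsigma)$. The first two items are essentially free: the two inclusions are exactly what the author just recorded, the intersection statement follows from $\W(\Upsigma) \cap \Aut^\mr{w}(\DD) \subseteq \W(\Upsigma) \cap \Aut(\DD) = \set{e}$ (Proposition~\ref{semidirect1}(3)), and normality is inherited from the ambient group via the identity $f s_\upalpha f^{-1} = s_{f(\upalpha)}$, which forces $f \W(\Upsigma) f^{-1} \subseteq \W(\Upsigma)$ for any $f \in \Aut(\Upsigma)$, a fortiori for $f \in \Aut^\mr{w}(\Upsigma)$.

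The only nontrivial step is the generation statement $\Aut^\mr{w}(\Upsigma) = \W(\Upsigma) \cdot \Aut^\mr{w}(\DD)$. Here I would pick $f \in \Aut^\mr{w}(\Upsigma)$ and invoke Proposition~\ref{semidirect1}(3) to factor it inside the larger group as $f = w s$ with $w \in \W(\Upsigma)$ and $s \in \Aut(\DD)$. Since $w \in \W(\Upsigma) \subseteq \Aut^\mr{w}(\Upsigma)$ by what was established before the corollary, I get $s = w^{-1} f \in \Aut^\mr{w}(\Upsigma)$. Now $s$ simultaneously satisfies $s(\Uplambda) = \Uplambda$ (it is a diagram automorphism) and preserves the multiplicity of every root in $\Upsigma$ (it is a weighted root system automorphism); restricting the latter to the subset $\Uplambda \cup (2\Uplambda \cap \Upsigma) \subseteq \Upsigma$ gives exactly the condition that defines $\Aut^\mr{w}(\DD)$ in Definition~\ref{def4}. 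Hence $s \in \Aut^\mr{w}(\DD)$ and the desired factorisation is achieved.

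There is no real obstacle in this proof; the work has already been front-loaded into Proposition~\ref{semidirect1}(3), the preceding proposition (which ensures that linear extensions of weighted diagram automorphisms land in $\Aut^\mr{w}(\Upsigma)$), and the observation that $\W(\Upsigma)$ preserves multiplicities because it is realised inside $\Im(\Upomega)$. The corollary is purely a bookkeeping consequence of assembling these facts.
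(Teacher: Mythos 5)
Your proof is correct and follows essentially the same approach the paper intends: both hinge on the inclusion $\W(\Upsigma), \Aut^\mr{w}(\DD) \subseteq \Aut^\mr{w}(\Upsigma)$ and on using the transitivity of $\W(\Upsigma)$ on Weyl chambers (equivalently, the ambient factorization of Proposition~\ref{semidirect1}(3)) to split any weighted automorphism as $ws$, then observing $s = w^{-1}f$ is automatically weighted. The paper phrases this as an immediate consequence without spelling out the identification $\Aut^\mr{w}(\Upsigma) \cap \Aut(\DD) = \Aut^\mr{w}(\DD)$, which you correctly make explicit.
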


\begin{remark}\label{woof}
To recapitulate, we know that a weighted-isomorphism class of restricted root systems yields a weighted-isomorphism class of Dynkin diagrams, and it is determined by that class. Similarly, an isomorphism class of real semisimple Lie algebras yields a weighted-isomorphism class of restricted root systems. We know that it cannot be determined by that class though, since adding a compact semisimple summand to the Lie algebra would not change the restricted root system. But it turns out that this is the only obstacle: if $\mk{g}$ has no nonzero compact ideals, it is determined up to isomorphism by its (weighted) restricted root system -- and thus by its (weighted) Dynkin diagram. The standard proof of this fact, however, is rather roundabout. One usually first classifies real semisimple Lie algebras -- compact or not -- by some other means like Satake or Vogan diagrams, and then computes explicitly the restricted root system of every Lie algebra in the classification list. It turns out that non-isomorphic real semisimple Lie algebras (without nonzero compact ideals) have non-weighted-isomorphic restricted root systems. The list of the (weighted) restricted root systems of all noncompact simple Lie algebras can be found in \cite[pp. 336-340]{submanifoldsholonomy}. Note that it is given there in the slightly different -- but equivalent -- context of irreducible symmetric spaces of noncompact type (see more on that in Section \ref{symmetric_spaces}). For example, the restricted root system of $\mk{su}(r, r+n), n \geqslant 1$, is isomorphic to $(BC)_r$, and its Dynkin diagram looks like this:

\begin{center}
\begin{dynkinDiagram}[arrow style={length=5pt}, labels*={2,2,2,2,{(2n,1)}}, text style/.style={scale=0.95}, scale=3]B{}
\dynkinRootMark{O}5
\end{dynkinDiagram}
\end{center}

Here we write the simple root multiplicities near the corresponding vertices. Note that keeping track of the root multiplicities is crucial here: the Lie algebra $\mk{sp}(r,r+n), n \geqslant 1$, also has $(BC)_r$ as its restricted root system, but the multiplicities are different.
\end{remark}

Let us look at the homomorphism $\Upomega \colon N_K(\mk{a}) \to \Aut^\mr{w}(\Upsigma)$ through the lens of the semidirect product decomposition $\Aut^\mr{w}(\Upsigma) = \W(\Upsigma) \rtimes \Aut^\mr{w}(\DD)$. First off, note that $\Ker(\Upomega) = Z_K(\mk{a})$. As we have already seen, $\W(\Upsigma) \subseteq \Im(\Upomega)$. In fact, for each reflection $s_\upalpha \in \W(\Upsigma)$, we constructed an element in $K^0$ that preserves $\mk{a}$ and whose image under $\Upomega$ is $s_\upalpha$, which means that $\W(\Upsigma) \subseteq \Upomega(N_{K^0}(\mk{a}))$. Here $N_{K^0}(\mk{a})$ is a subgroup of $N_K(\mk{a})$, and it can also be described as $N_K(\mk{a}) \cap K^0 = N_K(\mk{a}) \cap \Inn(\mk{g})$ (this equality follows from the fact that $K$ is a maximal compact subgroup of $\Aut(\mk{g})$ and thus $K^0 = K \cap \Aut^0(\mk{g}) = K \cap \Inn(\mk{g})$). In particular, $N_{K^0}(\mk{a})$ is a normal subgroup of $N_K(\mk{a})$. It is proven in \cite[Prop. 6.52]{knapp} that the image of $N_{K^0}(\mk{a})$ under $\Upomega$ is precisely $\W(\Upsigma)$ and thus $N_{K^0}(\mk{a})/Z_{K^0}(\mk{a}) \cong \W(\Upsigma)$.

Consider another normalizer subgroup of $K$ given as $N_K(\mk{n})$. Let $k \in N_K(\mk{n})$. As an element of $K$, $k$ commutes with $\uptheta$ and thus preserves $\uptheta \mk{n}$. Since $k$ is orthogonal with respect to $B_\uptheta$, it must preserve $\mk{g} \ominus (\mk{n} \oplus \uptheta \mk{n}) = \mk{g}_0 = \mk{k}_0 \oplus \mk{a}$. But $\mk{k}_0 \subseteq \mk{k}$ and $\mk{k}_0 \perp \mk{a}$, so $k$ preserves $\mk{k}_0 = \mk{g}_0 \cap \mk{k}$ and thus $\mk{a}$. We conclude that $N_K(\mk{n}) \subseteq N_K(\mk{a})$. Since $\Aut^\mr{w}(\DD)$ consists precisely of those weighted automorphisms of $\Upsigma$ that preserve the set of positive roots and $k(\mk{g}_\upalpha) = \mk{g}_{\Upomega(k)(\upalpha)}$, we deduce that $N_K(\mk{n}) = \Upomega^{-1}(\Aut^\mr{w}(\DD))$.

\begin{theorem}\label{maintheorem}
Let $\mk{g}$ be a real semisimple Lie algebra with $\uptheta, \mk{a},$ and $\Upsigma^+$ fixed. Then:
\begin{enumerate}
    \item $\Upomega(N_K(\mk{n})) = \Aut^\mr{w}(\DD)$, and hence $\Upomega$ is surjective.
    \item If $\mk{g}$ is simple, then $\Aut^\mr{w}(\DD) = \Aut(\DD)$, and hence $\Aut^\mr{w}(\Upsigma) = \Aut(\Upsigma)$.
\end{enumerate}
\end{theorem}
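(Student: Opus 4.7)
For part (2), I would give a classification-based argument: since $\mk{g}$ is simple, $\Upsigma$ is irreducible. If $\DD$ is simply-laced (types $A_n, D_n, E_6, E_7, E_8$), all roots of $\Upsigma$ lie in a single $\W(\Upsigma)$-orbit, and since $\W(\Upsigma) \subseteq \Aut^\mr{w}(\Upsigma)$, all roots share a common multiplicity, so every automorphism of $\DD$ is automatically weighted. If $\DD$ is non-simply-laced (types $B_n$, $C_n$, $F_4$, $G_2$, $(BC)_n$), a glance at the classification gives $\Aut(\DD) = \set{e}$, making the statement vacuous.

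For part (1), only the inclusion $\Aut^\mr{w}(\DD) \subseteq \Upomega(N_K(\mk{n}))$ requires proof, the opposite inclusion being the definition of $N_K(\mk{n})$. I would first reduce to the case where $\mk{g}$ has no nonzero compact ideals by extending any putative lift by the identity on the compact part. Decomposing $\mk{g} = \bigoplus \mk{g}_i$ into noncompact simple ideals with induced decompositions $\mk{a} = \bigoplus \mk{a}_i$, $\Upsigma = \bigsqcup \Upsigma_i$, and $\mk{n} = \bigoplus \mk{n}_i$, an $s \in \Aut^\mr{w}(\DD)$ produces a permutation $\upsigma$ of the irreducible components together with weighted isomorphisms $s_i \colon \Upsigma_i \isoto \Upsigma_{\upsigma(i)}$ preserving positive systems; by the classification fact in Remark \ref{woof}, this forces $\mk{g}_i \cong \mk{g}_{\upsigma(i)}$. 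It then suffices to produce, for each $i$, a Lie algebra isomorphism $F_i \colon \mk{g}_i \isoto \mk{g}_{\upsigma(i)}$ intertwining the chosen Cartan involutions, mapping $\mk{a}_i$ onto $\mk{a}_{\upsigma(i)}$ so as to realize $s_i$, and mapping $\mk{n}_i$ onto $\mk{n}_{\upsigma(i)}$; assembling the $F_i$'s yields the required $k \in N_K(\mk{n})$.

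The substance thus lies in the simple case, where I would follow the paper's plan and divide noncompact simple real Lie algebras into three blocks according to their nature as real forms of their complexification: compact (excluded, since $\Upsigma = \varnothing$), \emph{complex type} ($\mk{g} = \mk{l}_\Rl$ for a complex simple $\mk{l}$), and \emph{noncompact non-complex type}. In the complex-type block, $\Upsigma$ equals the root system of $\mk{l}$ with uniform multiplicity $2$, so $\Aut^\mr{w}(\DD) = \Aut(\DD)$; any such $s$ lifts by the complex Isomorphism Theorem applied to $\mk{l}$ qua complex Lie algebra, yielding a complex (hence real) automorphism of $\mk{g}$, and an adjustment by an inner complex automorphism arranges the lift to preserve the compact real form defining $\mk{k}$. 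In the non-complex block, I would pick a $\uptheta$-stable Cartan subalgebra of $\mk{g}^\Cx$ containing $\mk{a}$, extend $s$ to a linear automorphism $\tilde s$ of the associated real dual permuting the full root system $\Updelta$ of $\mk{g}^\Cx$ compatibly with the $\uptheta$-action, lift $\tilde s$ to $\Aut(\mk{g}^\Cx)$ via the complex Isomorphism Theorem, and descend to $\mk{g}$ using compatibility with the conjugation $\uptau$ defining $\mk{g}$ as a real form.

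The main obstacle will be the non-complex block. A weighted automorphism of $\Upsigma$ does not canonically extend to a root system automorphism of $\Updelta$: one must specify its action on the "compact" directions in the Cartan subalgebra of $\mk{g}^\Cx$ and ensure the fibers of the restriction map $\Updelta \to \Upsigma \cup \set{0}$ — which encode the roots of the reductive centralizer $Z_{\mk{g}^\Cx}(\mk{a}^\Cx)$ — are permuted compatibly. The multiplicity-preservation hypothesis is precisely what makes such an extension possible. Further arranging the extension to commute with $\uptau$, so that the resulting complex automorphism descends to a real automorphism of $\mk{g}$ — and carrying this out without resorting to case-by-case analysis — is the delicate step that distinguishes the real setting from the complex one.
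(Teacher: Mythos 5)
Your argument for part (2) is correct and in fact a shade cleaner than the paper's: in an irreducible simply-laced root system the Weyl group acts transitively on roots, so $\W(\Upsigma) \subseteq \Aut^\mr{w}(\Upsigma)$ forces all multiplicities to agree and hence $\Aut^\mr{w}(\DD) = \Aut(\DD)$; in the remaining irreducible types $\Aut(\DD)$ is trivial. This only needs the classification of root systems, not of real Lie algebras. The reduction of part (1) to simple noncompact $\mk{g}$ via Proposition~\ref{decompositions}, and the handling of the complex-type block, also match the paper in substance.

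The genuine gap is in your block ``noncompact non-complex type,'' which conflates two very different situations. When $\mk{g}$ is a \emph{split} real form, $\mk{a}$ is the real part of a Cartan subalgebra of $\mk{g}^\Cx$, so $\Upsigma = \Updelta$ and the extension problem you flag never arises: the canonical-generators lift $\hat{s}$ already commutes with both $\uptheta$ and $\uptau$ and restricts to an element of $N_K(\mk{n})$ (this is the paper's Scenario~2, and it is as easy as the complex case). When $\mk{g}$ is neither split nor complex, your plan --- extend $s$ to an automorphism $\tilde s$ of $\Updelta$, lift by the complex Isomorphism Theorem, descend through $\uptau$ --- is precisely where the proposal stops. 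You name the extension-and-descent ``the delicate step'' and assert that multiplicity-preservation ``makes such an extension possible,'' but you neither construct $\tilde s$ nor establish compatibility with $\uptau$. That claim is essentially the theorem itself in disguise and cannot be taken on faith.

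The paper avoids that obstacle entirely via a trick you are missing. After the classification identifies $\mk{sl}(n,\Hq)$ and $\mk{e}_6^{-26}$ as the only remaining cases (both with $\Upsigma$ of type $A$), one notes that $\uptheta \in N_K(\mk{a})$ with $\Upomega(\uptheta) = -\Id_{\mk{a}^*}$, and decomposes $-\Id_{\mk{a}^*} = w_0 s$ in $\W(\Upsigma) \rtimes \Aut^\mr{w}(\DD)$ with $w_0$ the longest element and $s = -w_0$. For type $A$ (as well as $D_{2n+1}$, $E_6$), $s$ is the unique nontrivial diagram automorphism. Taking $\upvarphi \in N_K(\mk{a})$ with $\Upomega(\upvarphi) = w_0$ --- which exists because $\W(\Upsigma) \subseteq \Upomega(N_{K^0}(\mk{a}))$ --- one gets $\upvarphi\uptheta \in N_K(\mk{n})$ and $\Upomega(\upvarphi\uptheta) = w_0^2 s = s$. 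No extension to $\Updelta$ and no $\uptau$-compatibility check is ever performed. Without this $\uptheta$-based construction, your proof of part (1) is incomplete.
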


Informally, (1) means that every weighted automorphism of $\Upsigma$ can be lifted to an automorphism of $\mk{g}$. Also note that (2) might fail in case $\mk{g}$ is not simple. For instance, if $\mk{g} = \mk{su}(r,r+n) \oplus \mk{sp}(r,r+n), \, n \geqslant 1,$ then $\Upsigma = (BC)_r \sqcup (BC)_r$, so $\Aut(\DD) = \Zz/2\Zz$. But the two connected components of $\DD$ are not weighted-isomorphic, which means that $\Aut^\mr{w}(\DD)$ is trivial.

We will prove Theorem \ref{maintheorem} by first reducing it to the simple case and then (mostly) to the theory of complex semisimple Lie algebras.

We need to account for two things: first, $\mk{g}$ might have compact ideals, which make no contribution to the restricted root system, and second, $\mk{g}$ might have isomorphic noncompact simple ideals. To this end, let us write $\mk{g} = \mk{g}_\mr{c} \oplus \mk{g}_1^{l_1} \oplus \cdots \oplus \mk{g}_k^{l_k}$, where $\mk{g}_\mr{c}$ is the sum of all compact ideals of $\mk{g}$, each $\mk{g}_i$ is a noncompact simple ideal, $\mk{g}_i \not\simeq \mk{g}_j$ for $i \ne j$, and $\mk{g}_i^{l_i}$ simply stands for $\bigoplus_{j=1}^{l_i} \mk{g}_i$. Write $l = \sum_{i=1}^k l_i$ and let $S_l^{l_1, \ldots, l_k}$ stand for the subgroup of the symmetric group $S_l$ consisting of those permutations that permute the first $l_1$ elements with each other, the next $l_2$ elements with each other, etc. Clearly, $S_l^{l_1, \ldots, l_k} \simeq S_{l_1} \times \cdots \times S_{l_k}$. There is an obvious embedding $S_l^{l_1, \ldots, l_k} \hookrightarrow \Aut(\mk{g})$ given by the rule $\upsigma \cdot (X_\mr{c}, (X_s)_{s=1}^l) = (X_\mr{c}, (X_{\upsigma(s)})_{s=1}^l)$ (to be precise, this map is an antihomomorphism of groups). We already know that any Cartan involution on $\mk{g}$ respects the decomposition $\mk{g} = \mk{g}_\mr{c} \oplus \mk{g}_1^{l_1} \oplus \cdots \oplus \mk{g}_k^{l_k}$. Without loss of generality, we choose $\uptheta$ so that it is the same on the isomorphic summands, i.e.\label{dungeon} $\uptheta = \left(\Id_{\mk{g}_\mr{c}}, \prod_{j=1}^{l_1} \uptheta_1, \ldots, \prod_{j=1}^{l_k} \uptheta_k \right)$. This way, we have $\mk{k} = \mk{g}_\mr{c} \oplus \mk{k}_1^{l_1} \oplus \cdots \oplus \mk{k}_k^{l_k}$ and $\mk{p} = \mk{p}_1^{l_1} \oplus \cdots \oplus \mk{p}_k^{l_k}$. For such a choice of $\uptheta$, the image of the embedding $S_l^{l_1, \ldots, l_k} \hookrightarrow \Aut(\mk{g})$ actually lies in $N_K(\mk{a})$. We also have a subgroup $\Aut(\mk{g}_\mr{c}) \times \Aut(\mk{g}_1)^{l_1} \times \cdots \times \Aut(\mk{g}_k)^{l_k} \subseteq \Aut(\mk{g})$. Let $K_i = \Aut(\mk{g}_i)^{\uptheta_i}$ for $1 \leqslant i \leqslant k$. We can choose $\mk{a}$ so that $\mk{a} = \mk{a}_1^{l_1} \oplus \cdots \oplus \mk{a}_k^{l_k}, \, \mk{a}_i \subseteq \mk{p}_i$. We then have $\Upsigma = \Upsigma_1^{l_1} \sqcup \ldots \sqcup \Upsigma_k^{l_k}$, where $\Upsigma_i \subseteq \mk{a}_i$ is the restricted root system of $\mk{g}_i$ and $\Upsigma_i^{l_i}$ simply means $\bigsqcup_{j=1}^{l_i} \Upsigma_i$. In a similar vein, we have an obvious embedding $S_l^{l_1, \ldots, l_k} \hookrightarrow \Aut^\mr{w}(\Upsigma)$ and a subgroup $\Aut^\mr{w}(\Upsigma_1)^{l_1} \times \cdots \times \Aut^\mr{w}(\Upsigma_k)^{l_k} \subseteq \Aut^\mr{w}(\Upsigma)$. Any choice of positive roots for $\Upsigma$ is necessarily the union of those for its irreducible components. We may assume that $\Upsigma^+ = (\Upsigma_1^+)^{l_1} \sqcup \ldots \sqcup (\Upsigma_k^+)^{l_k}$, hence $\Uplambda = \Uplambda_1^{l_1} \sqcup \ldots \sqcup \Uplambda_k^{l_k}$, $D = D_1^{l_1} \times \cdots \times D_k^{l_k}$, and $\DD = \DD_1^{l_1} \sqcup \cdots \sqcup \DD_k^{l_k}$.

\begin{proposition}\label{decompositions}
\begin{enumerate}
    \item The group $\Aut(\mk{g})$ decomposes as a semidirect product
    $$
    \Aut(\mk{g}) = \left[ \Aut(\mk{g}_\mr{c}) \times \Aut(\mk{g}_1)^{l_1} \times \cdots \times \Aut(\mk{g}_k)^{l_k} \right] \rtimes S_l^{l_1, \ldots, l_k}.
    $$
    In particular, we have $\Inn(\mk{g}) = \Inn(\mk{g}_\mr{c}) \times \Inn(\mk{g}_1)^{l_1} \times \cdots \times \Inn(\mk{g}_k)^{l_k}$.
    \item The group $K$ decomposes as a semidirect product
    $$
    K = \left[ \Aut(\mk{g}_\mr{c}) \times K_1^{l_1} \times \cdots \times K_k^{l_k} \right] \rtimes S_l^{l_1, \ldots, l_k}.
    $$
    In particular, we have $K^0 = \Inn(\mk{g}_\mr{c}) \times (K_1^0)^{l_1} \times \cdots \times (K_k^0)^{l_k}$.
    \item The group $N_K(\mk{a})$ decomposes as a semidirect product
    $$
    N_K(\mk{a}) = \left[ \Aut(\mk{g}_\mr{c}) \times N_{K_1}(\mk{a}_1)^{l_1} \times \cdots \times N_{K_k}(\mk{a}_k)^{l_k} \right] \rtimes S_l^{l_1, \ldots, l_k}.
    $$
    In particular, we have $N_{K^0}(\mk{a}) = \Inn(\mk{g}_\mr{c}) \times N_{K_1^0}(\mk{a}_1)^{l_1} \times \cdots \times N_{K_k^0}(\mk{a}_k)^{l_k}$.
    \item The group $N_K(\mk{n})$ decomposes as a semidirect product
    $$
    N_K(\mk{n}) = \left[ \Aut(\mk{g}_\mr{c}) \times N_{K_1}(\mk{n}_1)^{l_1} \times \cdots \times N_{K_k}(\mk{n}_k)^{l_k} \right] \rtimes S_l^{l_1, \ldots, l_k}.
    $$
    \item The group $\Aut^\mr{w}(\Upsigma)$ decomposes as a semidirect product
    $$
    \Aut^\mr{w}(\Upsigma) = \left[ \Aut^\mr{w}(\Upsigma_1)^{l_1} \times \cdots \times \Aut^\mr{w}(\Upsigma_k)^{l_k} \right] \rtimes S_l^{l_1, \ldots, l_k}.
    $$
    \item The group $\W(\Upsigma)$ decomposes as a product
    $$
    \W(\Upsigma) = \W(\Upsigma_1)^{l_1} \times \cdots \times \W(\Upsigma_k)^{l_k}.
    $$
    \item The group $\Aut^\mr{w}(\DD)$ decomposes as a semidirect product
    $$
    \Aut^\mr{w}(\DD) = \left[ \Aut^\mr{w}(\DD_1)^{l_1} \times \cdots \times \Aut^\mr{w}(\DD_k)^{l_k} \right] \rtimes S_l^{l_1, \ldots, l_k}.
    $$
    \item With respect to the decompositions (3) and (5), the homomorphism $\Upomega \colon N_K(\mk{a}) \to \Aut^\mr{w}(\Upsigma)$ decomposes as
    $$
    \Upomega = \left( E, \Upomega_1^{l_1}, \ldots, \Upomega_k^{l_k}, \Id_{S_l^{l_1, \ldots, l_k}} \right),
    $$
    where $E$ is the trivial homomorphism $\Aut(\mk{g}_\mr{c}) \to \set{e}$, $\Upomega_i \colon N_{K_i}(\mk{a}_i) \to \Aut^\mr{w}(\Upsigma_i)$, and the last component $\Id_{S_l^{l_1, \ldots, l_k}}$ formally means that the following diagram commutes:
    $$
    \xymatrix{
    & N_K(\mk{a}) \ar[dd]^{\displaystyle \Upomega} \\
    S_l^{l_1, \ldots, l_k} \ar@{^{(}->}[ur] \ar@{^{(}->}[dr] \\
    & \Aut^\mr{w}(\Upsigma)
    }
    $$
\end{enumerate}
\end{proposition}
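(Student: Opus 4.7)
The plan is to prove the eight statements in order, with the structural work concentrated in (1); everything else falls out either by intersecting the decomposition in (1) with an appropriate subgroup of $\Aut(\mk{g})$, or by producing the analogous argument on the root-system side.

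\textbf{Core of (1).} I would invoke the standard fact that any automorphism of a finite-dimensional semisimple Lie algebra permutes its simple ideals, preserving isomorphism type. Since the Killing form is intrinsic and distinguishes compact simple ideals (negative definite) from noncompact ones (indefinite), any automorphism preserves both $\mk{g}_\mr{c}$ and the noncompact part $\bigoplus_i \mk{g}_i^{l_i}$, and within the noncompact part can only permute isomorphic copies (no mixing between non-isomorphic $\mk{g}_i$ and $\mk{g}_j$). This produces the asserted semidirect product, with $S_l^{l_1,\ldots,l_k}$ acting by transporting automorphism factors between paired copies. For the $\Inn(\mk{g})$ statement I would take identity components, using that $S_l^{l_1,\ldots,l_k}$ is discrete, that $\Aut(\mk{h})^0 = \Inn(\mk{h})$ for any semisimple $\mk{h}$, and that identity components commute with direct products.

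\textbf{Deductions (2), (3), (4).} Each of $K$, $N_K(\mk{a})$, $N_K(\mk{n})$ is cut out of $\Aut(\mk{g})$ by a condition that respects the block decomposition, thanks to the choices made on page \pageref{dungeon}: $\uptheta$ is the identity on $\mk{g}_\mr{c}$ and the \emph{same} $\uptheta_i$ on each copy of $\mk{g}_i$, and $\mk{a}_i$, $\mk{n}_i$ are chosen compatibly across copies. These uniform choices ensure that $\Aut(\mk{g}_\mr{c})$ and $S_l^{l_1,\ldots,l_k}$ automatically satisfy the defining conditions ($\Aut(\mk{g}_\mr{c})$ because it acts trivially on the noncompact part, $S_l^{l_1,\ldots,l_k}$ because it commutes with $\uptheta$ and preserves $\mk{a}$ and $\mk{n}$ by construction), while the factor in the $i$-th noncompact block reduces to $K_i$, $N_{K_i}(\mk{a}_i)$, or $N_{K_i}(\mk{n}_i)$ respectively. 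The $K^0$ and $N_{K^0}(\mk{a})$ statements follow by intersecting with $K^0$, which discards the permutation factor and replaces each continuous factor by its identity component.

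\textbf{Root-system side (5)--(7) and naturality (8).} Part (5) is the key analog of (1): a weighted automorphism of $\Upsigma = \bigsqcup_i \Upsigma_i^{l_i}$ permutes its irreducible components, and only weighted-isomorphic ones can be swapped. Here I would invoke Remark \ref{woof}: since the $\mk{g}_i$ are pairwise non-isomorphic noncompact simple Lie algebras, the $\Upsigma_i$ are pairwise non-weighted-isomorphic, forcing each isotypic block $\Upsigma_i^{l_i}$ to be preserved. Part (6) is immediate because root reflections stabilize each irreducible component. Part (7) follows by combining (5) with $\Aut^\mr{w}(\DD) = \Aut^\mr{w}(\Upsigma) \cap \Stab(D)$ (cf.\ Proposition \ref{semidirect1}(2)) and the compatible choice $D = D_1^{l_1} \times \cdots \times D_k^{l_k}$, which restricts the decomposition blockwise. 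Part (8) is naturality: $\Upomega$ is defined blockwise by restriction to $\mk{a}$, the $\Aut(\mk{g}_\mr{c})$ factor restricts to the identity on $\mk{a}$, and the tautological copies of $S_l^{l_1,\ldots,l_k}$ in $N_K(\mk{a})$ and $\Aut^\mr{w}(\Upsigma)$ match by construction.

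\textbf{Main obstacle.} The only genuinely non-formal ingredient is the input from Remark \ref{woof}, which rests on the case-by-case classification of noncompact real simple Lie algebras; without it one could not rule out an \emph{a priori} weighted isomorphism between some $\Upsigma_i$ and $\Upsigma_j$ for $i \neq j$, and the $S_l^{l_1,\ldots,l_k}$ factor in (5) would have to be replaced by a larger permutation group. Beyond that, the work is careful bookkeeping: fixing the isomorphisms between copies of each $\mk{g}_i$ once and for all, and using them to transport $\uptheta_i$, $\mk{a}_i$, $\mk{n}_i$, $\Upsigma_i^+$ uniformly across copies so that all eight decompositions are simultaneously compatible.
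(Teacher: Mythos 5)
The paper omits the proof of this proposition entirely, offering only a one-sentence indication ("it simply boils down to the fact that any automorphism of $\mk{g}$ must preserve $\mk{g}_\mr{c}$ and permute the remaining noncompact simple ideals, and the same is true for weighted automorphisms of $\Upsigma$"), and your proposal is a correct fleshing-out of precisely that sketch: part (1) from the fact that automorphisms permute simple ideals within isomorphism type and preserve the compact/noncompact split via the Killing form, parts (2)--(4) by intersecting with subgroups cut out by conditions compatible with the uniform choices of $\uptheta_i$, $\mk{a}_i$, $\mk{n}_i$ across copies, and parts (5)--(8) by the analogous block argument on the root-system side. One point worth flagging is that you make explicit a dependence the paper leaves tacit: to know that a weighted automorphism of $\Upsigma$ cannot shuffle a copy of $\Upsigma_i$ with a copy of $\Upsigma_j$ for $i\neq j$, one genuinely needs the classification-based fact from Remark \ref{woof} that pairwise non-isomorphic noncompact simple real Lie algebras have pairwise non-weighted-isomorphic restricted root systems; without it the permutation factor in (5) (and hence in (7) and (8)) could a priori be larger than $S_l^{l_1,\ldots,l_k}$, and identifying this as the one non-formal ingredient is the right diagnosis.
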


We omit the proof, as it simply boils down to the fact that any automorphism of $\mk{g}$ must preserve $\mk{g}_\mr{c}$ and permute the remaining noncompact simple ideals, and the same is true for weighted automorphisms of $\Upsigma = \Upsigma_1^{l_1} \sqcup \ldots \sqcup \Upsigma_k^{l_k}$.

Part (8) of Proposition \ref{decompositions} implies that if $\Upomega_i(N_{K_i}(\mk{n}_i)) = \Aut^\mr{w}(\DD_i)$ for each $i$, then $\Upomega(N_K(\mk{n})) = \Aut^\mr{w}(\DD)$. Consequently, in order to prove Theorem \ref{maintheorem}, we may restrict to the case when $\mk{g}$ is simple and noncompact. We will actually show that $\Upomega(N_K(\mk{n})) = \Aut(\DD)$ in this case, thus proving both parts (1) and (2) of the theorem.

We will consider three different scenarios. To begin with, we can immediately cast aside all those simple Lie algebras where $\Aut(\DD)$ (and hence $\Aut^\mr{w}(\DD)$) is trivial. This leaves us with those Lie algebras where $\Upsigma = A_n \, (n \geqslant 2), D_n \, (n \geqslant 4),$ or $E_6$.

Each complex semisimple Lie algebra $\mk{g}$ gives rise to at least two noncompact real ones: the realification and the split real form of $\mk{g}$. These are going to be our first two scenarios. As a matter fact, here we do not require the Lie algebra to be simple, and we will only use that assumption in the third scenario. Let $\mk{g}$ be a complex semisimple Lie algebra, and let $\mk{h}, \Updelta, \Uplambda,$ and $\set{h_i, e_i, f_i}_{i=1}^r$ be as on page \pageref{complexnotation}. It follows from the Isomorphism Theorem that there exists a unique automorphism $\uptheta$ of $\mk{g}$ as of a real Lie algebra that is $\Cx$-antilinear and satisfies 
\begin{equation}\label{Cartan_generators}
\uptheta(h_i) = -h_i, \quad \uptheta(e_i) = -f_i, \quad \uptheta(f_i) = -e_i.
\end{equation}
This automorphism is involutive and is in fact a compact real structure and a Cartan involution\footnote{The latter two notions coincide for any complex semisimple Lie algebra, see \cite[Sec. 5]{onishchik}} (hence the notation). In particular, $\mk{p} = i\mk{k}$. Every Cartan involution on $\mk{g}$ is of this form (for some choice of $\mk{h}, \Uplambda,$ and canonical generators). We can introduce two more involutive automorphisms of $\mk{g}$: the Weyl involution $\upomega$ and the split real structure $\uptau$. The Weyl involution is given on the canonical generators by the same formula \eqref{Cartan_generators} but is $\Cx$-linear, whereas the split real form fixes all the canonical generators but is $\Cx$-antilinear. Once again, the existence and uniqueness of both of these automorphisms follow from the Isomorphism Theorem. Clearly, the three automorphisms commute pairwise and the product of any two of them equals the third one. The fixed point (real) subalgebra $\mk{g}^\uptau$ is the split real form of $\mk{g}$, and every split real form is of this form (for some choice of $\mk{h}, \Uplambda,$ and canonical generators).

\textbf{Scenario 1: the realification}. Here we assume that our real semisimple Lie algebra is the realification of a complex one and use the notation established in the previous paragraph. In particular, the Cartan involution $\uptheta$ is given on the canonical generators by \eqref{Cartan_generators}. Write $\mk{g} = \mk{k} \oplus \mk{p}$ for the corresponding Cartan decomposition. Since $\uptheta$ is $\Cx$-antilinear and $\uptheta(h_i) = -h_i$ for each $i$, we have $\mk{h} \cap \mk{p} = \mk{h}(\Rl), \, \mk{h} \cap \mk{k} = i\mk{h}(\Rl)$. We claim that $\mk{h}(\Rl)$ is a maximal abelian subspace of $\mk{p}$. Indeed, let $\mk{b} \subset \mk{p}$ be an abelian subspace containing $\mk{h}(\Rl)$. If we think of $\mk{g}$ as a real Euclidean vector space (with respect to the inner product $B_\uptheta$), then all operators of the form $\ad(X), \, X \in \mk{p},$ are self-adjoint (this is true for any real semisimple Lie algebra). Since such operators are also $\Cx$-linear, they are also self-adjoint with respect to the Hermitian inner product $\cross{X}{Y} = B_\uptheta(X,Y) + iB_\uptheta(X, iY)$ and hence diagonalizable over $\Cx$. It follows that $\mk{b}_\Cx = i\mk{b} \oplus \mk{b}$ (here $i\mk{b} \subseteq i\mk{p} = \mk{k}$) is an abelian complex subalgebra of $\mk{g}$ consisting of semisimple elements and containing $\mk{h}$, hence we must have $\mk{b}_\Cx = \mk{h}$ and thus $\mk{b} = \mk{h}(\Rl)$. So we can write $\mk{a} = \mk{h}(\Rl)$. In this case, $\mk{a}^* = \mk{h}^*(\Rl)$, and the root system $\Updelta$ of $\mk{g}$ as of a complex semisimple Lie algebra coincides with the restricted root system $\Upsigma$ of $\mk{g}$ as of a real semisimple Lie algebra (for our specific choice of $\uptheta$ and $\mk{a}$). Moreover, the root space decomposition and the restricted root space decomposition coincide as well. Note that $\mk{g}_0 = \mk{h} = i\mk{a} \oplus \mk{a}$ and $\mk{k}_0 = i\mk{a}$. Each restricted root subspace $\mk{g}_\upalpha, \upalpha \in \Upsigma,$ thus has real dimension 2, i.e. all of the root multiplicities equal 2. As we know from Section \ref{section_complex}, every (not necessarily weighted) automorphism $s \in \Aut(\DD)$ can be lifted to a (complex) automorphism $\hat{s}$ of $\mk{g}$ given by the rule $\hat{s}(h_i) = h_{s(i)}, \hat{s}(e_i) = e_{s(i)}, \hat{s}(f_i) = f_{s(i)}$. This automorphism satisfies $\hat{s}(\mk{a}) = \mk{a}$ and $\Uppsi(\hat{s}) = (\restr{\hat{s}}{\mk{a}}^*)^{-1} = s$. It is easily seen on the canonical generators that $\hat{s}$ commutes with $\uptheta$ and preserves $\mk{n}$. Consequently, $\hat{s} \in N_K(\mk{n})$ and $\Upomega(\hat{s}) = \Uppsi(\hat{s}) = s$, which finishes the proof in this scenario.

\textbf{Scenario 2: split real form}. Now suppose that our real semisimple Lie algebra is a split real form of its complexification. We denote the latter by $\mk{g}$ and use the same notation as above. Then our real semisimple Lie algebra can be described as $\mk{g}^\uptau$. One can easily see that $\mk{g}^\uptau$ is the real Lie subalgebra of $\mk{g}$ generated by $e_i, f_i,$ and $h_i, 1 \leqslant i \leqslant r$. The automorphism $\uptheta$ of $\mk{g}$ clearly preserves $\mk{g}^\uptau$. Since $\mk{g}^\uptau$ is a real form of $\mk{g}$, the Killing form $B$ of $\mk{g}$ is the $\Cx$-bilinear extension of the Killing form $B^\uptau$ of $\mk{g}^\uptau$. This implies that the restriction of $\uptheta$ to $\mk{g}^\uptau$ is a Cartan involution on $\mk{g}^\uptau$. Moreover, if we write the corresponding Cartan decomposition as $\mk{g}^\uptau = \mk{k} \oplus \mk{p}$, then $\mk{h}(\Rl)$ lies in $\mk{p}$ and is its maximal abelian subspace. Indeed, for each $X \in \mk{p}$, the operator $\ad_{\mk{g}^\uptau}(X)$ is diagonalizable over $\Rl$, hence its $\Cx$-linear extension $\ad_{\mk{g}}(X)$ is diagonalizable over $\Cx$ as an operator on $\mk{g}$. Therefore, in the same fashion as above, the existence of a larger abelian subspace of $\mk{p}$ would lead to a toral subalgebra of $\mk{g}$ larger than $\mk{h}$, hence a contradiction. Once again, we can take $\mk{a} = \mk{h}(\Rl)$, in which case $\mk{a}^* = \mk{h}^*(\Rl)$ and the restricted root system $\Upsigma$ of $\mk{g}^\uptau$ coincides with $\Updelta$. Just as before, every diagram automorphism $s \in \Aut(\DD)$ can be lifted to the complex automorphism $\hat{s} \in N_{\Aut(\mk{g})}(\mk{h})$ of $\mk{g}$ such that $\Uppsi(\hat{s}) = s$, and it can be seen from the defining formula for $\hat{s}$ that it commutes with both $\uptheta$ and $\uptau$. In particular, it preserves $\mk{g}^\uptau$ and the restriction $\restr{\hat{s}}{\mk{g}^\uptau}$ lies in $N_K(\mk{n})$. We have $\Upomega(\restr{\hat{s}}{\mk{g}^\uptau}) = \Uppsi(\hat{s}) = s$, which finishes the proof in scenario 2.

\textbf{Scenario 3: the rest}. Now we go back to our assumption that $\mk{g}$ is simple and $\Aut(\DD)$ is nontrivial. An examination of the list of all real simple noncompact Lie algebras (\cite[pp. 336-340]{submanifoldsholonomy}) reveals that if $\mk{g}$ is neither split nor complex, it has to be isomorphic to either $\mk{sl}(n,\Hq) \, (n \geqslant 3)$ or $\mk{e}_6^{-26}$. The restricted root systems of these Lie algebras are $A_{n-1}$ and $A_2$, respectively. In both cases, $\Aut(\DD) \cong \mathbb{Z}/2\mathbb{Z}$, and there is only one nontrivial diagram automorphism that we want to lift to $N_K(\mk{n})$. Recall that we have a distinguished automorphism $\uptheta$ of $\mk{g}$ fixed. Plainly, $\uptheta \in N_K(\mk{a})$ and $\Upomega(\uptheta) = -\Id_{\mk{a}^*}$. The weighted root system automorphism $-\Id_{\mk{a}^*}$ can be decomposed as $-\Id_{\mk{a}^*} = w_0 s$, where $w_0 \in \W(\Upsigma)$ and $s \in \Aut^\mr{w}(\DD)$. Here $s(D) = D$ and $-\Id_{\mk{a}^*}(D) = -D$, so $w_0(D) = -D$ (this uniquely determines $w_0$ and also shows that it is the longest element of $\W(\Upsigma)$ with respect to the system of generators $s_{\upalpha_1}, \ldots, s_{\upalpha_r}$). The diagram automorphism $s = -w_0$ may or may not be trivial, depending on $\Upsigma$. Note that this construction does not really rely on $\mk{g}$, nor does it use root multiplicities, so it can be carried out for any root system $(V, \Updelta)$: pick $\Updelta^+$ and decompose $-\Id_V \in \Aut(\Updelta)$ as $-\Id_V = w_0 s$ with respect to the semidirect product decomposition $\Aut(\Updelta) = \W(\Updelta) \rtimes \Aut(\DD)$. It was shown in \cite[\S 4, Prop. 4]{onishchik} that, in case $\Updelta$ is irreducible, $s$ is a nontrivial diagram automorphism precisely when $\Updelta = A_n \, (n \geqslant 2), D_{2n+1} (n \geqslant 2),$ or $E_6$. This covers both of our cases $\mk{g} = \mk{sl}(n,\Hq) \, (n \geqslant 3)$ and $\mk{g} = \mk{e}_6^{-26}$. Now, we know from the discussion after Proposition \ref{meow} that there exists $\upvarphi \in N_K(\mk{a})$ such that $\Upomega(\upvarphi) = w_0$. We have $\upvarphi \uptheta \in N_K(\mk{n})$ and $\Upomega(\upvarphi \uptheta) = w_0^2 s = s$. In other words, the only nontrivial element of $\Aut(\DD)$ lies in the image of $\Upomega$ and so $\Upomega(N_K(\mk{n})) = \Aut(\DD)$, which completes the proof of Theorem \ref{maintheorem}.

\begin{remark}
Part (2) of Theorem \ref{maintheorem} can also be proven simply by examining the classification of simple noncompact Lie algebras and the list of their (weighted) Dynkin diagrams (\cite[pp. 336-340]{submanifoldsholonomy}). Indeed, for any such diagram, if there are two vertices that differ by a (not necessarily weighted) diagram automorphism, then they happen to have the same multiplicity, which implies that every diagram automorphism is weighted.
\end{remark}

\begin{sepcorollary}[maintheorem]
Let $\mk{g}$ be a real semisimple Lie algebra. Then we have $N_K(\mk{a})/Z_K(\mk{a}) \cong \Aut^\mr{w}(\Upsigma)$ and $N_K(\mk{n})/Z_K(\mk{n}) \cong \Aut^\mr{w}(\DD)$.
\end{sepcorollary}

\begin{sepcorollary}[maintheorem]
Let $\mk{g}, \mk{g}'$ be real semisimple Lie algebras with $\uptheta, \uptheta', \mk{a}, \mk{a}', \Upsigma^+,$ and $\Upsigma'^+$ fixed.
\begin{enumerate}
    \item Every weighted isomorphism $f \colon \mk{a} \isoto \mk{a}^*$ between $\Upsigma$ and $\Upsigma'$ is an isometry. In particular, $\Aut^\mr{w}(\Upsigma) \subseteq \O(\mk{a}^*)$.
    \item Now assume that neither $\mk{g}$ nor $\mk{g}'$ have nonzero compact ideals. Then for every weighted isomorphism $f \colon \mk{a}^* \isoto \mk{a}'^*$ between $\Upsigma$ and $\Upsigma'$, there exists a Lie algebra isomorphism $F \colon \mk{g} \isoto \mk{g}'$ such that $F \circ \uptheta = \uptheta' \circ F$ and $F(\mk{a}) = \mk{a}'$ and the induced weighted isomorphism $\mk{a}^* \isoto \mk{a}'^*$ between $\Upsigma$ and $\Upsigma'$ coincides with $f$. In particular, for every weighted diagram isomorphism $s \colon \Uplambda \isoto \Uplambda'$ between $\DD$ and $\DD'$, there exists such $F \colon \mk{g} \isoto \mk{g}'$ that the induced diagram isomorphism $\restr{f}{\Uplambda} \colon \Uplambda \isoto \Uplambda'$ coincides with $s$.
\end{enumerate}
\end{sepcorollary}

\begin{proof}
For part (2), we know from Remark \ref{woof} that there exists some Lie algebra isomorphism $\wt{F} \colon \mk{g} \isoto \mk{g}'$. As explained in the proof of Proposition \ref{meow}, we may assume $\wt{F} \circ \uptheta = \uptheta' \circ \wt{F}$ and $\wt{F}(\mk{a}) = \mk{a}'$. Let $\wt{f} \colon \mk{a}^* \isoto \mk{a}'^*$ be the induced weighted isomorphism between $\Upsigma$ and $\Upsigma'$. Then $f \circ \wt{f}^{-1} \in \Aut^\mr{w}(\Upsigma')$. According to Theorem \ref{maintheorem}, there exists $\upvarphi' \in N_{K'}(\mk{a}')$ such that $\Upomega'(\upvarphi') = f \circ \wt{f}^{-1}$. Then the weighted root system isomorphism between $\Upsigma$ and $\Upsigma'$ induced by $F = \upvarphi' \circ \wt{F}$ is $\Upomega'(\upvarphi') \circ \wt{f} = f$. For part (1), write $\mk{g} = \mk{g}_\mr{c} \oplus \mk{g}_\mr{nc}$, where $\mk{g}_\mr{c}$ is the maximal compact ideal and $\mk{g}_\mr{nc}$ is the complementary noncompact ideal, and do the same for $\mk{g}'$: $\mk{g}' = \mk{g}'_\mr{c} \oplus \mk{g}'_\mr{nc}$. The restricted root systems of $\mk{g}$ and $\mk{g}'$ coincide with those of $\mk{g}_\mr{nc}$ and $\mk{g}'_\mr{nc}$, respectively, so every weighted isomorphism between them is an isometry by part (2) and our observation after Definition \ref{def3}.
\end{proof}

\section{Symmetric spaces of noncompact type}\label{symmetric_spaces}

The results of the previous section have a useful interpretation in terms of the theory of symmetric spaces of noncompact type, morally because those are 'the same' as noncompact real semisimple Lie algebras. See \cite[Ch. IV-VI]{helgason} for a background on symmetric spaces.

Let $M$ be a symmetric space of noncompact type. It is well known that the isometry group $I(M)$ is a Lie group in the compact-open topology and its action on $M$ is smooth. We denote it by $\widetilde{G}$ and its Lie algebra by $\mk{g}$. We have a map $\mk{g} \to \mk{X}(M)$, called the infinitesimal generator of the (transitive) action $\wt{G} \curvearrowright M$, that sends $X$ to the corresponding fundamental vector field $X^*$. This map is an injective anti-homomorphism of Lie algebras and its image is precisely the Lie subalgebra $\mathcal{K}(M) \subseteq \mk{X}(M)$ of Killing vector fields. Denote $G = \widetilde{G}^0$. Since $M$ is connected, the action $G \curvearrowright M$ is transitive as well.

Pick any $o \in M$, write $\widetilde{K}$ for the isotropy subgroup of $\wt{G}$ at $o$, and denote $K = \wt{K} \cap G$. The action $\wt{G} \curvearrowright M$ (as well as $G \curvearrowright M$) is proper (see \cite{diaz-ramos_proper}), which implies that $\wt{K}$ is a compact subgroup of $\wt{G}$. Moreover, $\wt{G}/\wt{K} = M$ is contractible, so $\wt{K}$ is a maximal compact subgroup of $\wt{G}$, $K$ is a maximal compact subgroup of $G$, and $K = \wt{K}^0$ is connected. We have the geodesic symmetry $s_o \in \wt{G}$ at $o$, which gives rise to an involutive automorphism $\Uptheta = C_{s_o} \colon g \mapsto s_o g s_o^{-1} = s_o g s_o$ of $\wt{G}$ and thus to an involutive automorphism $\uptheta = (C_{s_o})_* = \Ad(s_o)$ of $\mk{g}$. Since $M$ is of noncompact type, $\wt{G}$ (and thus $\mk{g}$) is semisimple and $\uptheta$ is a Cartan involution on $\mk{g}$, so we can write $\mk{g} = \mk{k} \oplus \mk{p}$ for the corresponding Cartan decomposition\footnote{In this section, we are no longer using the letter $K$ to denote $\Aut(\mk{g})^\uptheta$.}. One can show that $\wt{K}$ is an open subgroup of $\wt{G}^\Uptheta$, which means that $\Lie(\wt{K}) = \mk{k}$ and $\Ad_{\wt{G}}(\wt{K}) \subseteq \Aut(\mk{g})^\uptheta$. In particular, the adjoint representation of $\wt{K}$ on $\mk{g}$ is orthogonal with respect to $B_\uptheta$ and preserves $\mk{k}$ and $\mk{p}$.

Consider the orbit map $\wt{G} \twoheadrightarrow M, g \mapsto g \ccdot o$. Its differential at $e$ is a surjective linear map $\uppi \colon \mk{g} \twoheadrightarrow T_oM$ with $\Ker(\uppi) = \mk{k}$, so it maps $\mk{p}$ isomorphically onto $T_oM$. We always identify $\mk{p}$ and $T_oM$ by means of this isomorphism. One can easily show that this is an isomorphism of $\wt{K}$-representations: between the adjoint representation of $\wt{K}$ in $\mk{p}$ and its isotropy representation in $T_oM$.

\begin{remark}\label{base_point}
Note that if we were to pick another point $o' = g \ccdot o \in M$, we would have $s_{o'} = g s_o g^{-1}$ and hence $\uptheta' = \Ad(s_{o'}) = \Ad(g)\uptheta\Ad(g)^{-1}$. Consequently, the corresponding Cartan decompositions differ by $\Ad(g)$: $\Ad(g)(\mk{k}) = \mk{k}', \Ad(g)(\mk{p}) = \mk{p}'$.
\end{remark}

If we denote the Riemannian metric on $M$ by $g$, then $g_o$ is a $\wt{K}$-invariant inner product on $T_oM$, which is the same as an $\Ad_{\wt{G}}(\wt{K})$-invariant inner product on $\mk{p}$. From the previous section, we know that another such inner product is $\restr{B_\uptheta}{\mk{p} \times \mk{p}} = \restr{B}{\mk{p} \times \mk{p}}$. In general, these two inner products may not coincide. However, the difference between them allows an explicit description.

Let $M = M_1 \times \cdots \times M_k$ be the de Rham decomposition of $M$ (see \cite[Ch. IV, Sec. 5,6]{kobayashi_nomizu_I} for more on the de Rham decomposition). We say that $M_i$ and $M_j$ are homothetic and write $\boldsymbol{M_i \sim M_j}$ if there exists a diffeomorphism $\upvarphi \colon M_i \isoto M_j$ such that $\upvarphi^* g_j = a g_i$ for some $a > 0$ (i.e., $\upvarphi$ is 'almost' an isometry: it is conformal with a constant conformal factor). This is an equivalence relation on the set of de Rham factors, and it is in general weaker than $M_i \simeq M_j$ (being isometric). The isometry group of $M$ allows a description similar to that of $\Aut(\mk{g})$ (established in Proposition \ref{decompositions}(1)). We will present it in a slightly different form without grouping isometric de Rham factors together. We define:
\begin{align*}
    \boldsymbol{S_k^\simeq} = \set{\upsigma \in S_k \mid M_i \simeq M_{\upsigma(i)} \; \forall \, i \in \set{1, \ldots, k}}, \\
    \boldsymbol{S_k^\sim} = \set{\upsigma \in S_k \mid M_i \sim M_{\upsigma(i)} \; \forall \, i \in \set{1, \ldots, k}}.
\end{align*}
These are subgroups of the symmetric group $S_k$, and we have $S_k^\simeq \subseteq S_k^\sim$. Note that these definitions only make sense after we fix the (order in the) de Rham decomposition of $M$. For any pair of indices $i,j \in \set{1, \ldots, k}$ such that $M_i \simeq M_j$, pick an isometry $\upvarphi_{ij} \colon M_i \isoto M_j$ in such a way that if we have $M_i \simeq M_j \simeq M_l$, then $\upvarphi_{jl} \circ \upvarphi_{ij} = \upvarphi_{il}$. This gives an embedding 
\begin{gather*}
    S_k^\simeq \hookrightarrow I(M), \upsigma \mapsto \upvarphi_\upsigma, \; \text{where} \\ 
    \upvarphi_\upsigma(p_1, \ldots, p_k) = (\upvarphi_{\upsigma(1)1}(p_{\upsigma(1)}), \ldots, \upvarphi_{\upsigma(k)k}(p_{\upsigma(k)}))
\end{gather*} 
(to be precise, this is an injective group anti-homomorphism). Surely, this embedding depends on the choice of $\upvarphi_{ij}$'s. We also have an obvious embedding $I(M_1) \times \cdots \times I(M_k) \subseteq I(M)$. The following result is proven in \cite{mysecondpaper} (in much greater generality and in a slightly different notation more akin to the one we used in Proposition \ref{decompositions}):

\begin{fact}\label{isometry_semidirect}
Let $M$ be a symmetric space of noncompact type and $M = M_1 \times \dots \times M_k$ its de Rham decomposition. Then the group $I(M)$ decomposes as a semidirect product
    $$
    I(M) = \left[ I(M_1) \times \cdots \times I(M_k) \right] \rtimes S_k^\simeq.
    $$
In particular, we have $I^0(M) = I^0(M_1) \times \cdots \times I^0(M_k)$.

More generally, if $M'$ is another symmetric space of noncompact type with the de Rham decomposition $M' = M'_1 \times \cdots \times M'_k$ and $\upvarphi \colon M \isoto M'$ is an isometry, then there exists a permutation $\upsigma \in S_k$ and a collection of isometries $\upvarphi_i \colon M_i \isoto M'_{\upsigma^{-1}(i)}, 1 \leqslant i \leqslant k$, such that
$$
\upvarphi(p_1, \ldots, p_k) = (\upvarphi_{\upsigma(1)}(p_{\upsigma(1)}), \ldots, \upvarphi_{\upsigma(k)}(p_{\upsigma(k)})).
$$
As a consequence, the de Rham decomposition of $M$ is essentially unique up to reordering of the factors.
\end{fact}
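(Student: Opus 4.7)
My plan is to derive Fact \ref{isometry_semidirect} from the classical uniqueness of the de Rham decomposition for simply connected complete Riemannian manifolds. I would first establish the second (more general) statement about isometries $\upvarphi \colon M \isoto M'$, and then extract the semidirect product decomposition as the special case $M' = M$.

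For the general statement, I would invoke the de Rham decomposition theorem (\cite[Ch. IV, Sec. 5-6]{kobayashi_nomizu_I}): at every point $p$ of a simply connected complete Riemannian manifold, the tangent space splits uniquely as an orthogonal direct sum of parallel holonomy-invariant subspaces, each of them an irreducible holonomy representation, and the de Rham factors are the maximal integral manifolds of the associated parallel distributions. Since $M$ is of noncompact type, it is simply connected, complete, and free of Euclidean factors (the sectional curvature is nonpositive and no nonzero vector in $\mk{p}$ is holonomy-fixed). Any isometry $\upvarphi \colon M \isoto M'$ intertwines the Levi-Civita connections and hence the holonomy groups, so $d\upvarphi_p$ carries the de Rham decomposition of $T_p M$ to that of $T_{\upvarphi(p)} M'$. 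As irreducible summands of a given dimension are determined up to permutation, this produces a unique $\upsigma \in S_k$ with $d\upvarphi_p(T_p M_{\upsigma(i)}) = T_{\upvarphi(p)} M'_i$ for every $i$. I would then argue that $\upsigma$ is locally constant in $p$, hence globally constant by connectedness of $M$, and that $\upvarphi$ maps the maximal integral manifold through $p$ of the $\upsigma(i)$-th parallel distribution onto that of the $i$-th distribution; this globalizes the tangent-space statement to the factor-wise decomposition $\upvarphi(p_1, \ldots, p_k) = (\upvarphi_{\upsigma(1)}(p_{\upsigma(1)}), \ldots, \upvarphi_{\upsigma(k)}(p_{\upsigma(k)}))$ with isometries $\upvarphi_i \colon M_i \isoto M'_{\upsigma^{-1}(i)}$.

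For the first assertion I specialize to $M' = M$: the condition $M_i \simeq M_{\upsigma^{-1}(i)}$ forces $\upsigma \in S_k^\simeq$. Using the chosen family $\upvarphi_{ij}$ to realize the embedding $S_k^\simeq \hookrightarrow I(M)$, the above factorization shows that the natural homomorphism $[I(M_1) \times \cdots \times I(M_k)] \rtimes S_k^\simeq \to I(M)$ is surjective, injectivity being immediate. Normality of $I(M_1) \times \cdots \times I(M_k)$ is transparent, as it is the stabilizer of the unordered collection of de Rham factors. Since $S_k^\simeq$ is finite and meets $I(M_1) \times \cdots \times I(M_k)$ trivially, its image in $I(M)$ is a discrete transversal; hence $I^0(M)$ must lie in the identity component of the product, namely $I^0(M_1) \times \cdots \times I^0(M_k)$, and the reverse inclusion is obvious.

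I expect the main obstacle to be the upgrade from the pointwise tangent-space picture to a global one -- verifying that $\upvarphi$ respects the globally defined de Rham factorization of $M$ (not merely the infinitesimal decomposition at each point), and that $\upsigma$ depends neither on $p$ nor on any auxiliary choice. This is handled by the fact that each de Rham factor through $p$ is the maximal integral manifold of a parallel (hence holonomy-invariant) distribution and is a complete totally geodesic submanifold; an isometry preserving the parallel distributions at one point preserves the integral foliations throughout and therefore maps factors bijectively onto factors.
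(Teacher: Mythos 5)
The paper does not actually prove this statement: it is labelled as a \emph{Fact} and cited from \cite{mysecondpaper}, so there is no internal argument to compare yours against. That said, your route via the de Rham decomposition theorem is the natural one, and the overall structure -- establish the factorization of an arbitrary isometry from the uniqueness of the de Rham splitting, then specialize to $M' = M$ to extract the semidirect product -- is sound.

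The one place I would push you to be more precise is the passage from ``$\upvarphi$ maps leaves of the $\upsigma(i)$-th foliation onto leaves of the $i$-th foliation of $M'$'' to ``$\upvarphi$ is a product of factor-wise isometries.'' A priori the isometry between leaves induced by $\upvarphi$ could depend on which leaf one restricts to, and that dependence must be ruled out before writing $\upvarphi(p_1,\ldots,p_k) = (\upvarphi_{\upsigma(1)}(p_{\upsigma(1)}),\ldots,\upvarphi_{\upsigma(k)}(p_{\upsigma(k)}))$. The cleanest fix is to observe that for each $i$, the composition of $\upvarphi$ with the projection onto $M'_i$ has differential annihilating $T_pM_j$ for every $j \ne \upsigma(i)$ and every $p$, so by connectedness of $M$ it factors through the projection $M \to M_{\upsigma(i)}$; this is precisely what produces a well-defined $\upvarphi_{\upsigma(i)} \colon M_{\upsigma(i)} \isoto M'_i$ independent of the remaining coordinates. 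A second, minor point: when you say ``irreducible summands of a given dimension are determined up to permutation,'' what you actually need (and what de Rham's theorem gives) is that the summands are determined as an \emph{unordered collection of subspaces}, not merely up to dimension; $d\upvarphi_p$ then sends this collection bijectively to the corresponding collection for $M'$, and the induced bijection on indices is your $\upsigma$, which is uniquely determined precisely because the collections of subspaces are. With those clarifications, the rest of your argument -- $\upsigma \in S_k^\simeq$ when $M' = M$, normality of $I(M_1) \times \cdots \times I(M_k)$ as the kernel of $\upvarphi \mapsto \upsigma$, and identifying $I^0(M)$ via finiteness of $S_k^\simeq$ -- is correct as stated.
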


Let us write $\wt{G}_i = I(M_i), G_i = \wt{G}_i^0, \mk{g}_i = \Lie(\wt{G_i})$. Denote $o = (o_1, \ldots, o_k)$ and write $\wt{K}_i$ for the stabilizer subgroup of $\wt{G}_i$ at $o_i$ and $K_i = \wt{K}_i \cap G_i = \wt{K}_i^0$. We necessarily have $s_o = (s_{o_1}, \ldots s_{o_k})$, $\Uptheta = (\Uptheta_1, \ldots, \Uptheta_k)$, and $\uptheta = (\uptheta_1, \ldots, \uptheta_k)$, where $\uptheta_i = (\Uptheta_i)_* = \Ad(s_{o_i})$. The latter decomposition of $\uptheta$ implies that we have $\mk{k} = \bigoplus_{i=1}^k \mk{k}_i$ and $\mk{p} = \bigoplus_{i=1}^k \mk{p}_i$, just like we had on page \pageref{dungeon}. It is well know from the theory of symmetric spaces that $M$ is (de Rham) irreducible if and only if its isometry Lie algebra $\mk{g}$ is simple (this is no longer true for symmetric spaces of compact type). In particular, each $\mk{g}_i$ is simple (and noncompact). We stress that $\mk{g}$ has no nonzero compact ideals. The image of the restricted isotropy representation $K_i \hookrightarrow \O(T_{o_i}M_i)$ coincides with the restricted holonomy group $\Hol^0(M_i,o_i)$ (this is true for all semisimple symmetric spaces). In particular, this representation is irreducible and thus any two $K_i$-invariant inner products on $\mk{p}_i \cong T_{o_i}M_i$ are proportional. We already have two such inner products: $\restr{B}{\mk{p}_i \times \mk{p}_i}$ and $(g_i)_{o_i}$ (here $g_i$ is the Riemannian metric on $M_i$). Therefore, we have $(g_i)_{o_i} = \uplambda_i \restr{B}{\mk{p}_i \times \mk{p}_i}$, and we can write $g_o = \uplambda_1 \restr{B}{\mk{p}_1 \times \mk{p}_1} + \cdots + \uplambda_k \restr{B}{\mk{p}_k \times \mk{p}_k}$. We call $\uplambda_1, \ldots, \uplambda_k$ the \textbf{normalizing constants of} $\boldsymbol{M}$.

\begin{lemma}\label{normalizing_constants}
The normalizing constants of $M$ are well defined up to reordering.
\end{lemma}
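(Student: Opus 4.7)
The plan is to show that each normalizing constant $\uplambda_i$ is an intrinsic isometry invariant of the irreducible de Rham factor $M_i$, so that once the de Rham decomposition of $M$ is pinned down up to reordering (as provided by Fact \ref{isometry_semidirect}), the multiset $\set{\uplambda_1, \ldots, \uplambda_k}$ is completely determined by $M$.

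First, I would dispose of the base-point dependence. Suppose $o' = g \ccdot o$ is another choice of base point, giving rise to a Cartan decomposition $\mk{g} = \mk{k}' \oplus \mk{p}'$ with summands $\mk{p}'_i$. By Remark \ref{base_point}, $\Ad(g)$ carries $\mk{p}$ to $\mk{p}'$ and restricts to a linear isomorphism $\mk{p}_i \isoto \mk{p}'_i$. Since $\Ad(g) \in \Aut(\mk{g})$, it preserves the Killing form $B$, so $\Ad(g) \colon (\mk{p}_i, \restr{B}{\mk{p}_i \times \mk{p}_i}) \isoto (\mk{p}'_i, \restr{B}{\mk{p}'_i \times \mk{p}'_i})$ is an isometry. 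On the other hand, $g$ itself acts on $M$ as an isometry, so its differential identifies $T_{o_i}M_i$ with $T_{o'_i}M_i$ isometrically, and one checks that this is compatible with the orbit-map identifications $\mk{p}_i \cong T_{o_i}M_i$ and $\mk{p}'_i \cong T_{o'_i}M_i$. Hence the ratio $\uplambda_i$ is unchanged.

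Second, I would note that $\uplambda_i$ actually only depends on the isometry class of $M_i$. Indeed, $\mk{g}_i$ is an ideal of the semisimple Lie algebra $\mk{g}$, so its Killing form coincides with the restriction $\restr{B}{\mk{g}_i \times \mk{g}_i}$; thus $\restr{B}{\mk{p}_i \times \mk{p}_i}$ is nothing but the analogous bilinear form produced by running the whole construction with $M_i$ in place of $M$. Combined with the base-point independence above, this says that $\uplambda_i$ is an invariant of the isometric isomorphism class of $M_i$ alone.

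Finally, suppose we have two de Rham decompositions of $M$, which by Fact \ref{isometry_semidirect} differ by a permutation $\upsigma \in S_k$ together with isometries between the corresponding factors. By the previous step, isometric factors yield the same normalizing constant, so the two resulting tuples of normalizing constants differ only by $\upsigma$. This is precisely the statement of the lemma. The only substantive point is the base-point independence, which rests on the fact that $\Ad$-action preserves the Killing form; the remainder is formal.
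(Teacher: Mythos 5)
Your proof is correct and follows essentially the same route as the paper: invoke Fact~\ref{isometry_semidirect} to obtain a permutation and isometries between corresponding de Rham factors, reduce to a fixed base point via homogeneity, and then compare the Killing-form inner products (preserved by the induced Lie algebra isomorphisms) against the Riemannian metrics (preserved by the isometries) to conclude the ratios $\uplambda_i$ are matched. The only stylistic difference is that you explicitly package the argument as ``$\uplambda_i$ is an isometry invariant of $M_i$ alone,'' whereas the paper carries out the comparison in one step with a commutative diagram; the content is the same.
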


\begin{proof}
Formally, the lemma means that if we are given another de Rham decomposition $M = M'_1 \times \cdots \times M'_k$ and base point $o' = (o'_1, \ldots, o'_k)$ with the corresponding normalizing constants $\uplambda'_1, \ldots, \uplambda'_k$, then there exists a permutation $\upsigma \in S_k$ such that $M_i \simeq M'_{\upsigma(i)}$ and $\uplambda_i = \uplambda'_{\upsigma(i)}$ for each $i \in \set{1, \ldots, k}$. According to Fact \ref{isometry_semidirect}, there exists $\upsigma \in S_k$ satisfying the former condition: $M_i \simeq M'_{\upsigma(i)}$. Since each de Rham factor is a symmetric space in its own right, it is in particular a Riemannian homogeneous space, so we can pick isometries $\upvarphi_i \colon M_i \isoto M'_{\upsigma(i)}, 1 \leqslant i \leqslant k$, such that $\upvarphi_i(o_i) = o'_{\upsigma(i)}$. We have a Lie group isomorphism $F_i \colon \wt{G}_i \isoto \wt{G}'_{\upsigma(i)}, g \mapsto \upvarphi_i \circ g \circ \upvarphi_i^{-1},$ such that $F_i(s_{o_i}) = s_{o'_{\upsigma(i)}}$, and thus the induced Lie algebra isomorphism $f_i \colon \mk{g}_i \isoto \mk{g}'_{\upsigma(i)}$ satisfies $f_i(\mk{k}_i) = \mk{k}'_{\upsigma(i)}$ and $f_i(\mk{p}_i) = \mk{p}'_{\upsigma(i)}$. Plainly, $f_i \colon \mk{p}_i \isoto \mk{p}'_{\upsigma(i)}$ is an isometry with respect to the inner products $\restr{B_i}{\mk{p}_i \times \mk{p}_i}$ and $\restr{B'_{\upsigma(i)}}{\mk{p}'_{\upsigma(i)} \times \mk{p}'_{\upsigma(i)}}$, while $d(\upvarphi_i)_{o_i} \colon T_{o_i}M_i \isoto T_{o'_{\upsigma(i)}}M'_{\upsigma(i)}$ is an isometry with respect to $g_{o_i}$ and $g'_{o'_{\upsigma(i)}}$. We have a commutative diagram
$$
\xymatrix{
\mk{p}_i \ar[d]_{\rotatebox{90}{$\sim$}} \ar[rr]^{f_i}_{\sim} && \mk{p}'_{\upsigma(i)} \ar[d]_{\rotatebox{90}{$\sim$}} \\
T_{o_i}M_i \ar[rr]^{d(\upvarphi_i)_{o_i}}_{\sim} && T_{o'_{\upsigma(i)}}M'_{\upsigma(i)}
}
$$
which implies that $\uplambda_i = \uplambda'_{\upsigma(i)}$.
\end{proof}

\begin{proposition}\label{factors}
Given $1 \leqslant i,j \leqslant k$, consider the following conditions:
\begin{enumerate}[(i)]
    \item $M_i$ is isometric to $M_j$.
    \item $M_i$ is homothetic to $M_j$.
    \item $\mk{g}_i \simeq \mk{g}_j$.
\end{enumerate}
Then $(i) \Rightarrow (ii) \Leftrightarrow (iii)$. Moreover, $(ii) \Rightarrow (i)$ if and only if $\uplambda_i = \uplambda_j$.
\end{proposition}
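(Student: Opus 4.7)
The plan is to dispose of $(i) \Rightarrow (ii)$ as immediate from the definition (a homothety with factor $a = 1$ is an isometry), and $(ii) \Rightarrow (iii)$ by noting that the isometry Lie algebra of a Riemannian manifold is unchanged upon multiplying the metric by a positive constant. Hence any homothety $\upvarphi \colon M_i \isoto M_j$ conjugates $I(M_i)$ isomorphically onto $I(M_j)$ and differentiates to a Lie algebra isomorphism $\mk{g}_i \isoto \mk{g}_j$.

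The substantive direction is $(iii) \Rightarrow (ii)$. Starting with an isomorphism $F \colon \mk{g}_i \isoto \mk{g}_j$, I would first compose it with a suitable inner automorphism of $\mk{g}_j$ so as to arrange $F \circ \uptheta_i = \uptheta_j \circ F$ — possible because any two Cartan involutions on $\mk{g}_j$ are conjugate by an inner automorphism. In particular $F(\mk{p}_i) = \mk{p}_j$. Since $\mk{g}_i, \mk{g}_j$ are simple noncompact, hence centerless, we can identify $G_l \cong \Inn(\mk{g}_l)$ for $l = i, j$, and under this identification $F$ lifts canonically to a Lie group isomorphism $\bar F \colon G_i \isoto G_j$ via $\upvarphi \mapsto F \circ \upvarphi \circ F^{-1}$. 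As $G_i$ is connected and both sides have the same differential at the identity, the Lie-algebra identity $F \circ \uptheta_i = \uptheta_j \circ F$ integrates to $\bar F \circ \Uptheta_i = \Uptheta_j \circ \bar F$, which forces $\bar F\bigl((G_i^{\Uptheta_i})^0\bigr) = (G_j^{\Uptheta_j})^0$, i.e.\ $\bar F(K_i) = K_j$ (recalling that $K_l = (G_l^{\Uptheta_l})^0$ and is connected in the noncompact type). Hence $\bar F$ descends to an equivariant diffeomorphism $\wt F \colon M_i = G_i/K_i \isoto G_j/K_j = M_j$ with $\wt F(o_i) = o_j$ and $d\wt F_{o_i} = F|_{\mk{p}_i}$ under $T_{o_l}M_l \cong \mk{p}_l$.

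Now $F|_{\mk{p}_i}$ is a linear isometry for the Killing-form inner products $\restr{B_i}{\mk{p}_i \times \mk{p}_i}$ and $\restr{B_j}{\mk{p}_j \times \mk{p}_j}$, because $F$ preserves Killing forms. Combining this with $g_{o_i} = \uplambda_i \restr{B_i}{\mk{p}_i \times \mk{p}_i}$ and $g_{o_j} = \uplambda_j \restr{B_j}{\mk{p}_j \times \mk{p}_j}$ yields $(\wt F^* g_j)_{o_i} = (\uplambda_j/\uplambda_i)\, g_{o_i}$. Since $\wt F$ is equivariant and $G_i, G_j$ act by isometries, this scaling propagates to every point, so $\wt F^* g_j = (\uplambda_j/\uplambda_i)\, g_i$ globally, which establishes $(ii)$ and shows that $\wt F$ is an isometry exactly when $\uplambda_i = \uplambda_j$. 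For the converse direction of the last claim, any isometry $\upvarphi \colon M_i \isoto M_j$ may be adjusted by an isometry of $M_j$ so that $\upvarphi(o_i) = o_j$; the induced Lie algebra isomorphism then restricts to a map $\mk{p}_i \isoto \mk{p}_j$ that is simultaneously isometric for the Killing-form metrics (automatic) and for $g_{o_i}, g_{o_j}$ (since $\upvarphi$ is an isometry), forcing $\uplambda_i = \uplambda_j$.

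The main obstacle is the lifting step in the second paragraph: the identification $G_l \cong \Inn(\mk{g}_l)$ and the descent $\bar F \rightsquigarrow \wt F$ rely on the centerlessness of $\mk{g}_l$ and the connectedness of $K_l$, both standard for simple noncompact symmetric spaces and already invoked earlier in this section. Everything else is linear algebra on $\mk{p}_i \cong T_{o_i}M_i$ combined with the homogeneity of the group actions.
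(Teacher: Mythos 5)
Your proof is correct and follows essentially the same route as the paper's. The only real technical difference is in $(iii) \Rightarrow (ii)$: the paper reconstructs $M_i$ as $\widehat{G}_i/\widehat{K}_i$ with $\widehat{G}_i$ simply connected and then finds a group isomorphism preserving the maximal compacts, whereas you identify $G_i \cong \Inn(\mk{g}_i)$ (using centerlessness of a simple noncompact $\mk{g}_i$) and lift the Lie algebra isomorphism directly on the adjoint group after first arranging $F \circ \uptheta_i = \uptheta_j \circ F$. Both arguments reconstruct the space from the Lie algebra; yours trades the universal-cover detour for the conjugacy of Cartan involutions, and both ultimately pin down the conformal factor $\uplambda_j/\uplambda_i$ at the base point and spread it by homogeneity. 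Your treatment of the last biconditional (an isometry based at $o_i$ carrying $o_i$ to $o_j$ must respect both the Killing form metric and $g$, forcing $\uplambda_i = \uplambda_j$) matches the paper's commutative-diagram argument. One small point worth flagging: your appeal to $K_l = (G_l^{\Uptheta_l})^0$ is fine here because it follows from $\wt{K}_l$ being open in $\wt{G}_l^{\Uptheta_l}$ with $K_l = \wt{K}_l^0$, both of which are established before this proposition — you are not implicitly using the later Corollary that identifies $\wt{K} = \wt{G}^\Uptheta$.
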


\begin{proof}
Clearly, $(i) \Rightarrow (ii)$ and, since rescaling the Riemannian metric does not change the isometry Lie algebra, $(ii) \Rightarrow (iii)$. To show $(iii) \Rightarrow (ii)$, recall that $M_i$ can be recovered from $\mk{g}_i$ up to an isometry by taking a simply connected Lie group $\widehat{G}_i$ with $\Lie(\widehat{G}_i) \cong \mk{g}_i$, a maximal compact subgroup $\widehat{K}_i \subseteq \widehat{G}_i$, and  endowing the quotient $\widehat{G}_i/\widehat{K}_i$ with a suitable $\widehat{G}_i$-invariant Riemannian metric (there is only one up to rescaling). If $\mk{g}_i \simeq \mk{g}_j$, we can always find a Lie group isomorphism $F \colon \widehat{G}_i \isoto \widehat{G}_j$ such that $F(\widehat{K}_i) = \widehat{K}_j$, which induces a homothety $M_i \to M_j$. 

The proof of the last assertion is very similar to that of Lemma \ref{normalizing_constants}. Assume $(ii)$ and start with any homothety $\upvarphi \colon M_i \to M_j$. Composing it with a suitable isometry of $M_j$ if needed, we can replace $\upvarphi$ with a homothety $\upvarphi' \colon M_i \to M_j$ mapping $o_i$ to $o_j$, and $\upvarphi'$ is an isometry if and only if $\upvarphi$ is. We have an isomorphism $F \colon \wt{G}_i \isoto \wt{G}_j, \uppsi \mapsto \upvarphi' \circ g \circ \upvarphi'^{-1}$, mapping $s_{o_i}$ to $s_{o_j}$. If we write $f = F_* \colon \mk{g}_i \isoto \mk{g}_j$, then $f(\mk{p}_i) = \mk{p}_j$, and the following diagram commutes:
$$
\xymatrix{
\mk{p}_i \ar[d]_{\rotatebox{90}{$\sim$}} \ar[r]^{f}_{\sim} & \mk{p}_j \ar[d]_{\rotatebox{90}{$\sim$}} \\
T_{o_i}M_i \ar[r]^{d\upvarphi'_{o_i}}_{\sim} & T_{o_j}M_j
}
$$
The top arrow is an isometry with respect to the inner product $\restr{B}{\mk{p}_i \times \mk{p}_i}$ and $\restr{B}{\mk{p}_j \times \mk{p}_j}$. Consequently, the bottom arrow is an isometry (i.e., $\upvarphi'$ is an isometry) with respect to the inner product $(g_i)_{o_i} = \uplambda_i \restr{B}{\mk{p}_i \times \mk{p}_i}$ on $T_{o_i}M_i$ and $(g_j)_{o_j} = \uplambda_j \restr{B}{\mk{p}_j \times \mk{p}_j}$ on $T_{o_j}M_j$ if and only if $\uplambda_i = \uplambda_j$, which completes the proof.
\end{proof}

As we already know, condition $(iii)$ in Proposition \ref{factors} is also equivalent to $\Upsigma_i$ and $\Upsigma_j$ (or $\DD_i$ and $\DD_j$) being weighted-isomorphic. Thus, the symmetric space $M$ is fully determined up to an isometry by the (weighted) Dynkin diagram $\DD$ of $\mk{g}$ together with the normalizing constants $\uplambda_1, \ldots, \uplambda_k$ (which we could assign as weights to the connected components $\DD_1, \ldots, \DD_k$ of $\DD$). Proposition \ref{factors} has the following immediate

\begin{corollary}
The following conditions are equivalent:
\begin{enumerate}[(i)]
    \item $\uplambda_i = \uplambda_j$ whenever $M_i \sim M_j$.
    \item $S_k^\simeq = S_k^\sim$.
\end{enumerate}
If these conditions are satisfied, we call the Riemannian metric $g$ on $M$ \textbf{almost Killing}. If, moreover, $\uplambda_1 = \ldots = \uplambda_k = 1$ (i.e. if $g_o = \restr{B}{\mk{p} \times \mk{p}}$), we call $g$ \textbf{Killing}.
\end{corollary}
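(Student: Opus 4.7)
The plan is to read the corollary as an equivalence of two conditions controlling the relationship between the permutation subgroups $S_k^\simeq$ and $S_k^\sim$, and to derive both implications directly from Proposition \ref{factors}, which already establishes that, for a fixed pair of indices, homothety plus equal normalizing constants upgrades to isometry (and conversely). Since $(i)\Rightarrow(ii)$ of Proposition \ref{factors} gives $M_i\simeq M_j \Rightarrow M_i\sim M_j$ automatically, we always have $S_k^\simeq \subseteq S_k^\sim$, so condition (ii) of the corollary is just the reverse inclusion.

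For the implication (i) $\Rightarrow$ (ii), I would take an arbitrary $\upsigma \in S_k^\sim$ and check that $\upsigma \in S_k^\simeq$. By definition, for every $i$ we have $M_i \sim M_{\upsigma(i)}$, so hypothesis (i) gives $\uplambda_i = \uplambda_{\upsigma(i)}$. Applying the last assertion of Proposition \ref{factors} to the pair $(i,\upsigma(i))$ then upgrades the homothety $M_i \sim M_{\upsigma(i)}$ to an isometry $M_i \simeq M_{\upsigma(i)}$, which is exactly the condition for $\upsigma$ to lie in $S_k^\simeq$.

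For the converse (ii) $\Rightarrow$ (i), I would pick any pair of indices $i,j$ with $M_i \sim M_j$ and consider the transposition $\uptau = (i\,j) \in S_k$, which fixes every other index. Since $M_l \sim M_l$ trivially, $\uptau$ lies in $S_k^\sim$; by (ii), it also lies in $S_k^\simeq$, i.e.\ $M_i \simeq M_j$. The last assertion of Proposition \ref{factors} then forces $\uplambda_i = \uplambda_j$, which is (i).

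I do not anticipate any real obstacle: both directions are one-step consequences of the already-proved fact that, for a given pair $(i,j)$, the gap between $M_i\sim M_j$ and $M_i\simeq M_j$ is measured exactly by the equality $\uplambda_i=\uplambda_j$. The only thing to be slightly careful about is to frame the argument in terms of transpositions on the $S^\sim_k$-side (to extract pointwise information from a global hypothesis on permutations) and pointwise equalities $\uplambda_i=\uplambda_{\upsigma(i)}$ on the $S^\simeq_k$-side (to combine pointwise information into a single permutation).
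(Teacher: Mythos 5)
Your proof is correct and is precisely the argument the paper has in mind: the paper gives no proof, calling the corollary ``immediate'' from Proposition~\ref{factors}, and your two-way derivation — reducing (ii)$\Rightarrow$(i) to a transposition and (i)$\Rightarrow$(ii) to a pointwise application of the proposition — is that immediate argument.
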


Note that the Riemannian metric of $M$ is automatically almost Killing if $M$ is irreducible. More generally, if no two distinct de Rham factors of $M$ are homothetic, than its Riemannian metric is almost Killing.

Since the isometry group is not affected by constant rescaling of the metric, Fact \ref{isometry_semidirect} tells us that by rescaling the metric on the de Rham factors of $M$ -- that is, by adjusting the normalizing constants -- we might 'gain' or 'lose' some connected components of $I(M)$ (whereas $I^0(M)$ always stays the same). From this perspective, the almost Killing condition on the Riemannian metric ensures precisely that the isometry group is as large as possible, namely that $I(M) \simeq \left[ I(M_1) \times \cdots \times I(M_k) \right] \rtimes S_k^\sim$.

Having defined the normalizing constants, we can now formulate precisely the correspondence between symmetric spaces of noncompact type and noncompact real semisimple Lie algebras that we mentioned at the beginning of the section. Let $(M,g)$ be a symmetric space of noncompact type with the de Rham decomposition $M = M_1 \times \cdots \times M_k$. Note that if we rescale the Riemannian metric on each de Rham factor $M_i$ by some constant conformal factor $a_i > 0$ and denote the resulting metric on $M$ by $\wt{g}$, then $(M, \wt{g})$ is still a symmetric space of noncompact type and it has the same isometry Lie algebra. We say that $(M,\wt{g})$ is obtained from $(M,g)$ by \textbf{rescaling the normalizing constants}. If $M'$ is another symmetric space of noncompact type, we say that $M$ and $M'$ are equivalent if they become isometric after a suitable rescaling of their normalizing constants. Note that this notion of equivalence is weaker than being homothetic. We can now formulate the aforementioned correspondence:

\begin{equation*}
\left\{
\begin{gathered}
\text{equivalence classes} \\
\text{of symmetric spaces} \\ 
\text{of noncompact type} \\
\end{gathered}
\right\} \xrightarrow[\sim]{M \, \mapsto \, \Lie(I(M))} \left\{
\begin{gathered}
\text{isomorphism classes} \\
\text{of real semisimple Lie algebras} \\ 
\text{without nonzero compact ideals} \\
\end{gathered}
\right\}
\end{equation*}

If we start with a real semisimple Lie algebra $\mk{g}$ without nonzero compact ideals, one way to get a symmetric space $M$ of noncompact type with $\Lie(I(M)) \simeq \mk{g}$ is as follows. Take a simply connected Lie group $G$ with $\Lie(G) \simeq \mk{g}$, take a maximal compact subgroup $K \subset G$, write $M = G/K$, and endow $M$ with any $G$-invariant Riemannian metric. One can show that $I(M) \cong G/\mr{Z}(G)$ and thus $\Lie(I(M)) \simeq \mk{g}/\mk{z}(\mk{g}) = \mk{g}$. We already used this construction in the proof of Proposition \ref{factors}.

We can reformulate part (1) of Proposition \ref{decompositions} in a similar fashion to Fact \ref{isometry_semidirect}. For any pair of indices $i,j \in \set{1, \ldots, k}$ such that $\mk{g}_i \simeq \mk{g}_j$, pick an isomorphism $f_{ij} \colon \mk{g}_i \isoto \mk{g}_j$ in such a way that if we have $\mk{g}_i \simeq \mk{g}_j \simeq \mk{g}_l$, then $f_{jl} \circ f_{ij} = f_{il}$. It follows from Proposition \ref{factors} that this gives an embedding 
\begin{gather*}
    S_k^\sim \hookrightarrow \Aut(\mk{g}), \upsigma \mapsto f_\upsigma, \; \text{where} \\ 
    f_\upsigma(X_1, \ldots, X_k) = (f_{\upsigma(1)1}(X_{\upsigma(1)}), \ldots, f_{\upsigma(k)k}(X_{\upsigma(k)}))
\end{gather*} 
(again, this is an injective group anti-homomorphism). Proposition \ref{decompositions}(1) now asserts that $\Aut(\mk{g}) = \left[ \Aut(\mk{g}_1) \times \cdots \times \Aut(\mk{g}_k) \right] \rtimes S_k^\sim$. We have an open subgroup 
$$
\boldsymbol{\Aut(\mk{g})_M} = \left[ \Aut(\mk{g}_1) \times \cdots \times \Aut(\mk{g}_k) \right] \rtimes S_k^\simeq \subseteq \Aut(\mk{g}),
$$
which may or may not be a proper subgroup, depending on the normalizing constants\footnote{It may be preferable to write $\Aut(\mk{g})_{(M,g)}$ to avoid ambiguity in case one has another metric $\wt{g}$ obtained from $g$ by rescaling the normalizing constants.}. Note that this subgroup does not depend on the choice of $f_{ij}$'s (although the embedding $S_k^\sim \hookrightarrow \Aut(\mk{g})$ surely does depend on this choice). By passing from $\Aut(\mk{g})$ to $\Aut(\mk{g})_M$, we are prohibiting those automorphisms of $\mk{g}$ that permute isomorphic simple ideals whose corresponding normalizing constants do not coincide.

We are now in a position to prove the following result, which characterizes $I(M)$ intrinsically in terms of the Lie algebra $\mk{g}$ -- at least when the metric is almost Killing:

\begin{proposition}\label{isometry_automorphism}
Let $M$ be a symmetric space of noncompact type. The adjoint map $\Ad \colon I(M) \to \Aut(\mk{g})$ is an open embedding of Lie groups with image $\Aut(\mk{g})_M$. Moreover, $\Ad$ is an isomorphism if and only if the Riemannian metric of $M$ is almost Killing. In particular, we always have $\Ad \colon I^0(M) \isoto \Inn(\mk{g})$.
\end{proposition}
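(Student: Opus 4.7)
My plan has four components, which I would organize as follows. First I would handle injectivity and openness of $\Ad$ together. The kernel $Z(I(M))$ is discrete because $\mk{z}(\mk{g})=0$; to show it is trivial, take $z \in Z(I(M))$ and observe that it commutes with the isotropy $\wt{K}$ at $o$, so that $z \ccdot o$ is a $\wt{K}$-fixed point of $M$. Since $\exp_o \colon \mk{p} \isoto M$ is a $\wt{K}$-equivariant diffeomorphism in the noncompact-type setting, and the only $\wt{K}$-fixed vector in $\mk{p}$ is $0$ (because $Z_\mk{p}(\mk{k})=0$ by semisimplicity), the only $\wt{K}$-fixed point of $M$ is $o$ itself. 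Then $\Ad(z)=\Id$ forces the differential of $z$ at $o$ to be the identity, so $z=e$. Openness is immediate from $d\Ad_e = \ad \colon \mk{g} \isoto \Der(\mk{g}) = \Lie(\Aut(\mk{g}))$ being an isomorphism for semisimple $\mk{g}$, so $\Ad$ is a local diffeomorphism, hence an open embedding.

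Next I would show $\Ad(I(M)) \subseteq \Aut(\mk{g})_M$ using Fact \ref{isometry_semidirect}: every isometry of $M$ permutes the de Rham factors via some $\upsigma \in S_k^\simeq$ and is a product of isometries of the individual factors, so pushing through $\Ad$ lands in $\left[ \prod_i \Aut(\mk{g}_i) \right] \rtimes S_k^\simeq = \Aut(\mk{g})_M$.

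The crux is the reverse inclusion. Using the semidirect decomposition of $\Aut(\mk{g})_M$ and Proposition \ref{decompositions}(1), I split the task in two: first, realize each $\upsigma \in S_k^\simeq$ as an actual isometry of $M$, which is accomplished by the explicit embedding $S_k^\simeq \hookrightarrow I(M)$ constructed just before Fact \ref{isometry_semidirect}; second, surject $I(M_i)$ onto each $\Aut(\mk{g}_i)$ for the simple noncompact ideal $\mk{g}_i$. For the latter, the identity component is automatic: $\Ad(I^0(M_i))$ is an open subgroup of the connected group $\Inn(\mk{g}_i) = \Aut(\mk{g}_i)^0$ with the same Lie algebra, so the two coincide. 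For the outer components, I would invoke the construction from the paragraph following Proposition \ref{factors}: up to the (harmless) rescaling of the metric by $\uplambda_i$, $M_i$ may be realized as $\Aut(\mk{g}_i)/K_i$ via the polar decomposition $\Aut(\mk{g}_i) = \Inn(\mk{g}_i) \ccdot K_i$ (using that $\exp(\mk{p}_i) \subseteq \Inn(\mk{g}_i)$). The left-multiplication action $\Aut(\mk{g}_i) \curvearrowright \Aut(\mk{g}_i)/K_i$ is by isometries and provides the desired section $\Aut(\mk{g}_i) \hookrightarrow I(M_i)$ of $\Ad$.

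The almost Killing characterization then drops out: $\Ad$ is an isomorphism iff $\Aut(\mk{g})_M = \Aut(\mk{g})$, which, by comparing $S_k^\simeq$ with $S_k^\sim$, is exactly the almost Killing condition from the corollary preceding this proposition. The last clause $\Ad \colon I^0(M) \isoto \Inn(\mk{g})$ was already established as a substep of the surjectivity argument. I expect the main obstacle to be in realizing $\Aut(\mk{g}_i)$ inside $I(M_i)$ in the irreducible case: identifying $\Aut(\mk{g}_i)/K_i$ with $M_i$ as an $\Aut(\mk{g}_i)$-homogeneous space -- and in particular checking that the isotropy $K_i = \Aut(\mk{g}_i)^{\uptheta_i}$ agrees (at the level of the action on $M_i$) with $\wt{K}_i$ -- requires a careful bookkeeping of the Cartan decomposition of the reductive Lie group $\Aut(\mk{g}_i)$ and of the normalizing constant $\uplambda_i$.
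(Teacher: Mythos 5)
Your injectivity argument is essentially the same as the paper's -- both reduce to the observation that a $K$-invariant vector in $\mk{p}$ must vanish (you phrase it as $Z_\mk{p}(\mk{k})=0$, the paper as "invariants of the isotropy representation live in the flat factor"). Two small slips, though: the kernel of $\Ad$ is $Z_{I(M)}(I^0(M))$ rather than $Z(I(M))$ (these can differ when $I(M)$ is disconnected), and an element $z$ of that kernel is only guaranteed to commute with $K=\wt K\cap I^0(M)$, not with all of $\wt K$. Neither slip is fatal, since $K$-invariance of the tangent vector to the geodesic through $o$ and $z\ccdot o$ already suffices.

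Where you genuinely diverge from the paper is in the surjectivity step. The paper keeps $M$ whole and uses the tensorial characterization (Fact \ref{helgason_fact}) of the image of the isotropy representation: for $\upeta\in\Aut(\mk g)_M^\uptheta$, the restriction $T=\restr{\upeta}{\mk p}$ preserves $g_o$ (this is where the normalizing constants and $S_k^\simeq$ enter) and $R_o=-[[\,\cdot\,,\cdot\,],\cdot\,]$, hence comes from some $k\in\wt K$; then the Lemma $[\mk p,\mk p]=\mk k$ forces $\Ad(k)=\upeta$. You instead reduce to irreducible de Rham factors via Proposition \ref{decompositions} and Fact \ref{isometry_semidirect}, realize $M_i$ (after rescaling) as $\Aut(\mk g_i)/K_i$ using the global Cartan decomposition of the reductive group $\Aut(\mk g_i)$, and take the left-translation action as a section. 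This trades Helgason's Fact and the bracket lemma for the bookkeeping you yourself flag: checking that the section is compatible with $\Ad$ (or, alternatively, observing that $\Ad\circ\iota$ is an injective continuous endomorphism of $\Aut(\mk g_i)$ restricting to an automorphism of the centerless simple group $\Inn(\mk g_i)$, which forces it to be bijective since $\pi_0(\Aut(\mk g_i))$ is finite). Both routes are sound; the paper's is arguably more self-contained given that it already set up the decomposition of $\Aut(\mk g)$ and needs the $[\mk p,\mk p]=\mk k$ lemma anyway, while yours makes the role of the de Rham factors and the normalizing constants more transparent.
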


\begin{proof}
To begin with, observe that $\Ad$ is a local isomorphism. Indeed, its induced morphism of Lie algebras is $\ad \colon \mk{g} \isoto \Der(\mk{g}) = \Lie(\Aut(\mk{g}))$.

Next we prove that $\Ad$ is injective. Assume that $\upvarphi \in \Ker(\Ad)$. We first show that $\upvarphi(o) = o$. We have $(C_\upvarphi)_* = \Id_\mk{g}$, i.e. $\restr{C_\upvarphi}{G} = \Id_G$, which is the same as to say that $\upvarphi$ commutes with every element of $G$. In particular, it commutes with every element of $K$, which implies that $K$ stabilizes $\upvarphi(o)$. Assume that $\upvarphi(o) \ne o$. Then $K$ fixes every point of a geodesic $\upgamma$ emanating from $o$ and passing through $\upvarphi(o)$. Let $v = \dot{\upgamma}(0) \in T_oM$. We see that $v$ is an invariant of the (restricted) isotropy representation of $K$ in $T_oM$. But it is well known that the subspace of invariants of the (restricted or full) isotropy representation of a symmetric space is contained in (the tangent space to) the flat factor, which $M$ does not have by definition. We deduce that $\upvarphi \in K$. But then $d\upvarphi_o = \restr{\Ad(\upvarphi)}{\mk{p}} = \Id_\mk{p}$, so $\upvarphi = e$. This, together with the previous paragraph, implies that $\Ad$ embeds $I(M)$ into $\Aut(\mk{g})$ as an open subgroup.

Finally, we prove that $\Im(\Ad) = \Aut(\mk{g})_M$ (note that the rest will follow, as this subgroup equals the whole $\Aut(\mk{g})$ if and only if $S_k^\simeq = S_k^\sim$, i.e. if and only if the metric is almost Killing). Recall that if $V$ is a (finite-dimensional) vector space, then the representation of $\GL(V)$ in $V$ admits a unique extension to a representation in the full tensor algebra $TV = \bigoplus_{p,q = 0}^\infty T^{(p,q)}V$ by algebra automorphisms such that on $V^*$ it coincides with the dual representation. We will make use of the following description of the isotropy representation (see exercise A.6 on p. 227 in \cite{helgason} for the statement and p. 564 there for a solution):

\begin{fact}\label{helgason_fact}
Let $M$ be a simply connected symmetric space, $o \in M$, and $\wt{K} \subseteq I(M)$ the full isotropy subgroup at $o$. Then $T \in \GL(T_oM)$ lies in the image of the isotropy representation $\wt{K} \hookrightarrow \GL(T_oM)$ if and only if it preserves the inner product\footnote{This first condition cuts out precisely $\O(T_oM)$.} $g_o \in T^{(0,2)}T_oM$ and the curvature tensor $R_o \in T^{(1,3)}T_oM$.
\end{fact}

Fix $o \in M$ as before and consider the Cartan involution $\uptheta = \Ad(s_o)$ and the subgroup $\Aut(\mk{g})^\uptheta \subseteq \Aut(\mk{g})$ (we used to denote it by $K$ in Section \ref{section_real}). We want to show that $\Aut(\mk{g})^\uptheta$ intersects every connected component of $\Aut(\mk{g})$. Take any $\upeta \in \Aut(\mk{g})$. The automorphism $\upeta\uptheta\upeta^{-1}$ is also a Cartan involution. Since all Cartan involutions are conjugate by inner automorphisms, there exists $\updelta \in \Inn(\mk{g}) = \Aut^0(\mk{g})$ such that $\updelta\upeta\uptheta\upeta^{-1}\updelta^{-1} = \uptheta$, i.e. $\updelta\upeta \in \Aut(\mk{g})^\uptheta$. As $\updelta\upeta$ and $\upeta$ lie in the same connected component of $\Aut(\mk{g})$, we are done.

It then suffices to show that $\Im(\Ad)$ contains\footnote{In order for the RHS in this expression to make sense, we need to put an additional requirement on the choice of $f_{ij}$'s, namely $f_{ij} \circ \uptheta_i = \uptheta_j \circ f_{ij}$ for any $i,j$.} 
$$
\boldsymbol{\Aut(\mk{g})_M^\uptheta} = \Aut(\mk{g})^\uptheta \cap \Aut(\mk{g})_M = \left[ \Aut(\mk{g}_1)^{\uptheta_1} \times \cdots \times \Aut(\mk{g}_k)^{\uptheta_k} \right] \rtimes S_k^\simeq.
$$
Take any element $\upeta$ of this subgroup. It preserves the Cartan decomposition $\mk{g} = \mk{k} \oplus \mk{p}$, so we can write $T = \restr{\upeta}{\mk{p}} \in \GL(\mk{p}) \cong \GL(T_oM)$. We claim that $T$ lies in the image of the isotropy representation $\wt{K} \hookrightarrow \O(T_oM)$. Due to Fact \ref{helgason_fact}, it suffices to show that it is orthogonal and preserves the curvature tensor. As any other automorphism of $\mk{g}$, $\upeta$ is orthogonal with respect to $B$, so $T$ is orthogonal with respect to $\restr{B}{\mk{p} \times \mk{p}}$. By construction, for every $i \in \set{1, \ldots, k}$, if we write $T(\mk{p}_i) = \mk{p}_j$, then $\uplambda_i = \uplambda_j$, so $\restr{T}{\mk{p}_i} \colon \mk{p}_i \isoto \mk{p}_j$ is an isometry with respect to the inner products $(g_i)_{o_i}$ and $(g_j)_{o_j}$, which implies that $T$ is orthogonal with respect to $g_o$ as well.

The fact that $T$ preserves the curvature tensor at $o$ can be readily seen from the well-known expression for $R_o$ under the identification $T_oM \cong \mk{p}$: if $X,Y,Z \in \mk{p}$ then $R_o(X,Y)Z = -[[X,Y],Z]$. We deduce that there exists $k \in \wt{K}$ such that $\Ad(k)$ and $\upeta$ coincide on $\mk{p}$. Since they are both Lie algebra automorphisms, they have to coincide on the subspace $[\mk{p},\mk{p}] \subseteq \mk{k}$ as well. The rest follows from the following

\begin{lemma}
Let $\mk{g}$ be a real semisimple Lie algebra without nonzero compact ideals, and let $\mk{g} = \mk{k} \oplus \mk{p}$ be a Cartan decomposition. Then $[\mk{p},\mk{p}] = \mk{k}$.
\end{lemma}

\begin{proof}[Proof of the lemma]
If $\mk{g}$ is simple, this is the content of \cite[Ch. VI, problems 22-24]{knapp} (see p. 735 for solutions). If $\mk{g}$ is the sum of several noncompact simple ideals, namely $\mk{g} = \bigoplus_{i=1}^k \mk{g}_i$, then $\mk{k} = \bigoplus_{i=1}^k \mk{k}_i, \mk{p} = \bigoplus_{i=1}^k \mk{p}_i$, where $\mk{k}_i = \mk{k} \cap \mk{g}_i, \mk{p}_i = \mk{p} \cap \mk{g}_i$, so we have $[\mk{p}_i, \mk{p}_i] = \mk{k}_i$ and the assertion follows.
\end{proof}
\vspace{-1em}
\end{proof}

\vspace{-1em}

Looking at the proof of Proposition \ref{isometry_automorphism}, we have the following

\begin{sepcorollary}[isometry_automorphism]\label{isotropy_automorphisms}
In the notation of Proposition \ref{isometry_automorphism}, the map $\Ad_{\wt{G}} \colon \wt{K} \hookrightarrow \Aut(\mk{g})^\uptheta$ is an open embedding of Lie groups with image $\Aut(\mk{g})_M^\uptheta$. Moreover, its image is the whole $\Aut(\mk{g})^\uptheta$ if and only if the Riemannian metric is almost Killing. In particular, we always have $\Ad_{\wt{G}} \colon K \isoto \Inn(\mk{g})^\uptheta$ and $\wt{K} = \wt{G}^\Uptheta$ (and thus $K = G^\Uptheta$).
\end{sepcorollary}

\begin{proof}
We need only prove the very last assertion. We compute:
\begin{align*}
\Ad_{\wt{G}}(\wt{G}^\Uptheta) &= \Ad_{\wt{G}}(\wt{G})^\uptheta & &(\Ad_{\wt{G}} \circ \Uptheta = C_\uptheta \circ \Ad_{\wt{G}}) \\
&= \Ad(\wt{G}) \cap \Aut(\mk{g})^\uptheta \\
&= \Aut(\mk{g})_M \cap \Aut(\mk{g})^\uptheta & &(\text{by Proposition \ref{isometry_automorphism}}) \\
&= \Aut(\mk{g})_M^\uptheta \\
&= \Ad_{\wt{G}}(\wt{K}) & &(\text{by the first assertion}),
\end{align*}
so $\wt{G}^\Uptheta = \wt{K}$.
\end{proof}

We can use Proposition \ref{isometry_automorphism} to prove the following simple result, which tells how one can recover $M$ from its isometry Lie algebra $\mk{g}$ in an invariant fashion. Given $\mk{g}$ real semisimple, note that the set $\mathcal{C} \subseteq \Aut(\mk{g})$ of all Cartan involutions on $\mk{g}$ is an immersed submanifold of $\Aut(\mk{g})$, since it is an orbit of the adjoint action of $\Aut(\mk{g})$ on itself.

\begin{proposition}\label{Cartan_involutions}
Let $M$ be a symmetric space of noncompact type and $\mk{g} = \Lie(I(M))$. Then the map $\Upxi \colon M \to \mathcal{C}, p \mapsto \Ad(s_p)$, is a diffeomorphism. 
\end{proposition}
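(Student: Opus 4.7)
The plan is to realize $\Upxi$ as an equivariant bijection between two transitive homogeneous $G$-spaces (with $G = I^0(M)$) whose stabilizers at corresponding base points coincide, and then conclude that $\Upxi$ is a diffeomorphism from the standard orbit-stabilizer theorem.

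First I would check that $\Upxi$ is well defined and smooth: the fact that $\Ad(s_p)$ is a Cartan involution on $\mk{g}$ is exactly what was recalled at the start of the section (this is where the noncompact type hypothesis is used), so the image lies in $\mc{C}$; and the composition $p \mapsto s_p \mapsto \Ad(s_p)$ is smooth because $p \mapsto s_p$ from $M$ to $I(M)$ is a standard smooth map on a symmetric space and $\Ad$ is a Lie group morphism. Next, $G$ acts transitively on $M$ and also on $\mc{C}$ by $g \cdot \upvartheta \defeq \Ad(g) \upvartheta \Ad(g)^{-1}$; transitivity on $\mc{C}$ follows from the fact that all Cartan involutions of $\mk{g}$ are conjugate by inner automorphisms together with $\Inn(\mk{g}) = \Ad(G)$ from Proposition \ref{isometry_automorphism}. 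Remark \ref{base_point} then gives $s_{g \cdot o} = g s_o g^{-1}$, so
$$
\Upxi(g \cdot o) = \Ad(g s_o g^{-1}) = \Ad(g) \Upxi(o) \Ad(g)^{-1} = g \cdot \Upxi(o),
$$
i.e., $\Upxi$ is $G$-equivariant.

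Now I would compare stabilizers at the base points $o$ and $\uptheta = \Upxi(o)$. The stabilizer of $o$ in $G$ is $K$ by definition. The stabilizer of $\uptheta$ in $G$ is $\{g \in G \mid \Ad(g s_o g^{-1}) = \Ad(s_o)\}$; since $\Ad \colon I(M) \hookrightarrow \Aut(\mk{g})$ is injective by Proposition \ref{isometry_automorphism}, this simplifies to $\{g \in G \mid g s_o g^{-1} = s_o\} = G^\Uptheta$, and Corollary \ref{isotropy_automorphisms} identifies $G^\Uptheta$ with $K$. Hence both stabilizers equal $K$.

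Combining these observations, $\Upxi$ factors as the composition
$$
M \xleftarrow{\sim} G/K \xrightarrow{\Ad} \Inn(\mk{g})/\Inn(\mk{g})^\uptheta = \mc{C},
$$
where the right-hand identification endows $\mc{C}$ with its canonical orbit smooth structure as an immersed submanifold of $\Aut(\mk{g})$. Since $\Ad \colon G \isoto \Inn(\mk{g})$ restricts to a diffeomorphism $K \isoto \Inn(\mk{g})^\uptheta$ (again Corollary \ref{isotropy_automorphisms}), the induced map of coset spaces is a diffeomorphism, and this map coincides with $\Upxi$ under the identification $M = G/K$. I do not anticipate any serious obstacle: essentially all the work has been done in Proposition \ref{isometry_automorphism} and Corollary \ref{isotropy_automorphisms}, and the only mild subtlety is the check that the ``intrinsic'' smooth structure on $\mc{C}$ as an $\Aut(\mk{g})$-orbit agrees with the quotient smooth structure coming from the orbit-stabilizer theorem, which is automatic.
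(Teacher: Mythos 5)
Your proof is correct, but it reaches injectivity by a different route from the paper. You establish bijectivity by computing the stabilizer of $\uptheta = \Upxi(o)$ in $G = I^0(M)$ and showing it equals $K$: you observe $\Stab_G(\uptheta) = G^\Uptheta$ (using injectivity of $\Ad$) and then invoke the identity $K = G^\Uptheta$ from Corollary \ref{isotropy_automorphisms}. The paper instead proves injectivity directly and more elementarily: by injectivity of $\Ad$ it suffices to show $p \mapsto s_p$ is injective, and this follows from the geometric fact that $p$ is the \emph{unique} fixed point of its geodesic symmetry $s_p$ when $M$ is of noncompact type (for compact type this fails, and indeed $\Upxi$ would not be injective). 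The two proofs use the same framework (smooth equivariant surjection between homogeneous $G$-spaces) and both rely on the injectivity of $\Ad$ from Proposition \ref{isometry_automorphism}; the difference is in the final ingredient. Yours re-uses the algebraic identity $K = G^\Uptheta$, which the paper proves via the description $\Im(\Ad) = \Aut(\mk{g})_M$ (Corollary \ref{isotropy_automorphisms}) -- a somewhat heavy piece of machinery -- while the paper's own proof of this proposition is independent of Corollary \ref{isotropy_automorphisms} and substitutes a one-line geometric observation. Both are valid; the paper's version is more self-contained, yours arguably more systematic.
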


Note that we do not assume the Riemannian metric to be almost Killing here.
\vspace{-0.5em}
\begin{proof}
We know that each $\Ad(s_p)$ is indeed a Cartan involution. If we identify $I(M)$ with a subgroup of $\Aut(\mk{g})$ by means of $\Ad$, $\Upxi$ is easily seen to be $I(M)$-equivariant. As both $M$ and $\mathcal{C}$ are smooth homogeneous $I(M)$-spaces, $\Upxi$ is smooth and surjective. Since we already know that $\Ad$ is injective, it suffices to show that the map $M \to I(M), p \mapsto s_p$, is injective. Given $p \in M$, observe that $p$ is the only fixed point of the symmetry $s_p$ (this is specific to symmetric spaces of noncompact type and utterly fails for symmetric spaces of compact type). Therefore, the symmetries $s_p$ and $s_q$ cannot coincide unless $p = q$, which completes the proof.
\end{proof}
\vspace{-0.5em}
Combining Propositions \ref{decompositions}(1) and \ref{isometry_automorphism}, we obtain the following:

\begin{independentcorollary}\label{maximal_compact}
Let $\mk{g}$ be a real semisimple Lie algebra. For any Cartan involution $\uptheta$ on $\mk{g}$, $\Aut(\mk{g})^\uptheta$ is a maximal compact subgroup of $\Aut(\mk{g})$ and $\Inn(\mk{g})^\uptheta$ is a maximal compact subgroup of $\Inn(\mk{g})$.
\end{independentcorollary}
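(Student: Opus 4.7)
The plan is to leverage the correspondence between real semisimple Lie algebras without nonzero compact ideals and symmetric spaces of noncompact type afforded by Proposition \ref{isometry_automorphism} and its corollary, in order to transport the well-known maximal-compactness of the isotropy subgroup $\wt{K}$ in $I(M)$.

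The first step is to reduce to the case when $\mk{g}$ has no nonzero compact ideals. Writing $\mk{g} = \mk{g}_\mr{c} \oplus \mk{g}_\mr{nc}$, Proposition \ref{decompositions}(1) collapses (since a compact simple ideal is never isomorphic to a noncompact one) to direct products $\Aut(\mk{g}) = \Aut(\mk{g}_\mr{c}) \times \Aut(\mk{g}_\mr{nc})$ and $\Inn(\mk{g}) = \Inn(\mk{g}_\mr{c}) \times \Inn(\mk{g}_\mr{nc})$. Since $\uptheta$ must restrict to a Cartan involution on each summand, and the only Cartan involution on a compact semisimple Lie algebra is the identity, we have $\uptheta = \Id_{\mk{g}_\mr{c}} \oplus \uptheta_\mr{nc}$, whence $\Aut(\mk{g})^\uptheta = \Aut(\mk{g}_\mr{c}) \times \Aut(\mk{g}_\mr{nc})^{\uptheta_\mr{nc}}$ and similarly for $\Inn(\mk{g})^\uptheta$. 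The groups $\Aut(\mk{g}_\mr{c})$ and $\Inn(\mk{g}_\mr{c})$ are already compact (they are closed subgroups of $\O(\mk{g}_\mr{c}, -B)$), and maximal compact subgroups of a product of Lie groups are products of maximal compact subgroups of the factors, so the claim reduces to the corresponding statement for $\mk{g}_\mr{nc}$.

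Next, assuming $\mk{g}$ has no nonzero compact ideals, I would realize it as the isometry Lie algebra of a symmetric space $M$ of noncompact type whose Riemannian metric is almost Killing and whose geodesic symmetry at some base point $o$ induces exactly the prescribed $\uptheta$. A concrete model is $M = G/G^\Uptheta$, where $G$ is any connected Lie group with trivial center and Lie algebra $\mk{g}$ (e.g.\ $G = \Inn(\mk{g})^0$), $\Uptheta$ is the Lie group automorphism integrating $\uptheta$, and the metric comes from $\restr{B}{\mk{p} \times \mk{p}}$, which is Killing and hence almost Killing. Proposition \ref{isometry_automorphism} then furnishes a Lie group isomorphism $\Ad \colon I(M) \isoto \Aut(\mk{g})$ restricting to $I^0(M) \isoto \Inn(\mk{g})$, and Corollary \ref{isotropy_automorphisms} identifies $\wt{K}$ with $\Aut(\mk{g})^\uptheta$ and $K = \wt{K}^0$ with $\Inn(\mk{g})^\uptheta$. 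Since $\wt{K}$ and $K$ are maximal compact subgroups of $I(M)$ and $I^0(M)$ respectively (as recalled at the start of Section \ref{symmetric_spaces}), the corollary follows.

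The main obstacle is nothing beyond careful bookkeeping: one must (a) verify that the compact and noncompact parts decouple cleanly and that the compact part is harmless, and (b) arrange that the Cartan involution on the symmetric-space side agrees with the abstractly given $\uptheta$, which is taken care of by the explicit $M = G/G^\Uptheta$ construction. Everything else is a direct application of Proposition \ref{isometry_automorphism} and Corollary \ref{isotropy_automorphisms}.
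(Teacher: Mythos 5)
Your proposal is correct and follows essentially the same route as the paper: split off the compact ideals via Proposition \ref{decompositions}(1), realize the noncompact part as the isometry Lie algebra of a symmetric space of noncompact type with almost Killing metric, and invoke Proposition \ref{isometry_automorphism} and Corollary \ref{isotropy_automorphisms} to identify $\Aut(\mk{g}_{\mr{nc}})^{\uptheta_{\mr{nc}}}$ with the maximal compact isotropy subgroup $\wt{K}$. The only cosmetic difference is that you build $M$ explicitly as $\Inn(\mk{g}_{\mr{nc}})/\Inn(\mk{g}_{\mr{nc}})^{\uptheta_{\mr{nc}}}$ with the Killing metric so that $\uptheta_{\mr{nc}}$ is matched by construction, whereas the paper takes an arbitrary such $M$, picks the base point via Proposition \ref{Cartan_involutions}, and rescales the normalizing constants to make the metric almost Killing.
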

\vspace{-0.5em}
\begin{proof}
Let us write $\mk{g} = \mk{g}_\mr{c} \oplus \mk{g}_\mr{nc}$, where $\mk{g}_\mr{c}$ is the sum of all compact ideals of $\mk{g}$ and $\mk{g}_\mr{nc}$ is the complementary noncompact semisimple ideal. We know from Section \ref{section_real} that $\uptheta$ respects this decomposition and is the identity on $\mk{g}_\mr{c}$, so we can write $\uptheta = \Id_{\mk{g}_\mr{c}} + \uptheta_\mr{nc}$. According to Proposition \ref{decompositions}(1), $\Aut(\mk{g}) = \Aut(\mk{g}_\mr{c}) \times \Aut(\mk{g}_\mr{nc})$, so $\Aut(\mk{g})^\uptheta = \Aut(\mk{g}_\mr{c}) \times \Aut(\mk{g}_\mr{nc})^{\uptheta_\mr{nc}}$. Take a symmetric space $M$ of noncompact type with $\Lie(I(M)) \cong \mk{g}_\mr{nc}$ and $o \in M$ such that $\Ad(s_o) = \uptheta_\mr{nc}$ (the latter is possible by Proposition \ref{Cartan_involutions}). Rescale the normalizing constants of $M$ if necessary (this would change neither the isometry Lie algebra nor $s_o$) so that the metric becomes almost Killing. By Proposition \ref{isometry_automorphism} and Corollary \ref{isotropy_automorphisms}, we have $\Ad \colon I(M) \isoto \Aut(\mk{g}_\mr{nc})$ and $\Ad_{\wt{G}}(\wt{K}) = \Aut(\mk{g}_\mr{nc})^{\uptheta_\mr{nc}}$, which implies that $\Aut(\mk{g}_\mr{nc})^{\uptheta_\mr{nc}}$ is a maximal compact subgroup of $\Aut(\mk{g}_\mr{nc})$. We are left to prove that $\Aut(\mk{g}_\mr{c})$ is compact. This simply follows from the fact that it is a closed subgroup of $O_{B_\mr{c}}(\mk{g}_\mr{c})$, and the latter is compact because the Killing form $B_\mr{c}$ of $\mk{g}_\mr{c}$ is negative-definite. The last assertion follows easily, for $(\Aut(\mk{g})^\uptheta)^0 = \Aut^0(\mk{g})^\uptheta = \Inn(\mk{g})^\uptheta$ has to be a maximal compact subgroup of $\Aut^0(\mk{g}) = \Inn(\mk{g})$.
\end{proof}
\vspace{-0.5em}
Eventually, we can reformulate the results of Section \ref{section_real} in the language of symmetric spaces. Let $M$ be a symmetric space of noncompact type, and let us keep all the notation as above. Just as we did in Section \ref{section_real}, take $\mk{a} \subseteq \mk{p}$ a maximal abelian subspace, $\Upsigma \subseteq \mk{a}^*$ the corresponding restricted root system, and $\DD$ its Dynkin diagram (for some choice $\Upsigma^+$). Let $M = M_1 \times \cdots \times M_k$ be the de Rham decomposition of $M$. In a similar fashion to $\Aut(\mk{g})_M$, we can define:
\begin{align*}
    \boldsymbol{\Aut^\mr{w}(\Upsigma)_M} &\defeq \prod_{i=1}^k \Aut(\Upsigma_i) \rtimes S_k^\simeq \, \subseteq \, \prod_{i=1}^k \Aut(\Upsigma_i) \rtimes S_k^\sim \simeq \Aut^\mr{w}(\Upsigma), \\
    \boldsymbol{\Aut^\mr{w}(\DD)_M} &\defeq \prod_{i=1}^k \Aut(\DD_i) \rtimes S_k^\simeq \, \subseteq \, \prod_{i=1}^k \Aut(\DD_i) \rtimes S_k^\sim \simeq \Aut^\mr{w}(\DD).
\end{align*}
We are implicitly using Theorem \ref{maintheorem}(2) here by writing $\Aut(\Upsigma_i)$ and $\Aut(\DD_i)$ instead of $\Aut^\mr{w}(\Upsigma_i)$ and $\Aut^\mr{w}(\DD_i)$, respectively.

Just as $\Aut(\mk{g})_M$ can be described as $\Im(\Ad_{\wt{G}})$ due to Proposition \ref{isometry_automorphism}, the groups $\Aut^\mr{w}(\Upsigma)_M$ and $\Aut^\mr{w}(\DD)_M$ allow a neat alternative description as well. Indeed, note that we could endow $\mk{a}^*$ with an alternative inner product by considering $\restr{g_o}{\mk{a} \times \mk{a}}$ and carrying it to $\mk{a}^*$ along the induced isomorphism $\mk{a} \isoto \mk{a}^*$. Let us denote the corresponding orthogonal group by $\O_{g_o}(\mk{a}^*)$. It follows by a straightforward computation that:
\begin{align*}
\Aut^\mr{w}(\Upsigma)_M &= \Aut^\mr{w}(\Upsigma) \cap \O_{g_o}(\mk{a}^*), \\
\Aut^\mr{w}(\DD)_M &= \Aut^\mr{w}(\DD) \cap \O_{g_o}(\mk{a}^*).
\end{align*}
It easily follows from Theorem \ref{maintheorem} and Proposition \ref{decompositions} that: 
\begin{align*}
    &\W(\Upsigma) \subseteq \Aut^\mr{w}(\Upsigma)_M, \\
    &\Aut^\mr{w}(\Upsigma)_M = \W(\Upsigma) \rtimes \Aut^\mr{w}(\DD)_M, \\
    &\Upomega(N_{\Aut(\mk{g})_M^\uptheta}(\mk{a})) = \Aut^\mr{w}(\Upsigma)_M, \\
    &\Upomega(N_{\Aut(\mk{g})_M^\uptheta}(\mk{n})) = \Aut^\mr{w}(\DD)_M.
\end{align*}
Consider the adjoint representation of $\wt{K}$ in $\mk{g}$ and the normalizer $N_{\wt{K}}(\mk{a})$ together with its subgroups $N_{K}(\mk{a})$ and $N_{\wt{K}}(\mk{n})$. It easily follows from Corollary \ref{isotropy_automorphisms} that:
\begin{align*}
    &\Ad(N_{\wt{K}}(\mk{a})) = N_{\Aut(\mk{g})_M^\uptheta}(\mk{a}), \\
    &\Ad(N_{K}(\mk{a})) = N_{\Inn(\mk{g})^\uptheta}(\mk{a}), \\
    &\Ad(N_{\wt{K}}(\mk{n})) = N_{\Aut(\mk{g})_M^\uptheta}(\mk{n}).
\end{align*}
We arrive at the following result, which can be regarded as the geometric version of Theorem \ref{maintheorem}(1):

\begin{independentcorollary}\label{main_theorem_isometry}
Let $M$ be a symmetric space of noncompact type. For every $f \in \Aut^\mr{w}(\Upsigma)_M$, there exists an isometry $k \in N_{\wt{K}}(\mk{a})$ such that $\restr{\Ad(k)}{\mk{a}}^* = f$. If $f \in \Aut^\mr{w}(\DD)_M$, then $k$ necessarily lies in $N_{\wt{K}}(\mk{n})$, and if $f \in \W(\Upsigma)$, then $k$ can be chosen in $N_{K}(\mk{a})$.
\end{independentcorollary}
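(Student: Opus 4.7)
The plan is that this corollary is essentially a bookkeeping exercise: it is obtained by composing the lifting theorem for $\Upomega$ from Theorem \ref{maintheorem} with the identification of $\wt{K}$ as a subgroup of $\Aut(\mk{g})$ provided by Corollary \ref{isotropy_automorphisms}, using the chain of equalities displayed immediately above the statement. I would not compute anything new; instead I would simply verify that the two surjections fit together on the level of the normalizer subgroups.

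First, given $f \in \Aut^\mr{w}(\Upsigma)_M$, I would invoke the identity $\Upomega(N_{\Aut(\mk{g})_M^\uptheta}(\mk{a})) = \Aut^\mr{w}(\Upsigma)_M$, which was recorded just before the corollary as a consequence of Theorem \ref{maintheorem}(1) together with Proposition \ref{decompositions}. This produces some $\upvarphi \in N_{\Aut(\mk{g})_M^\uptheta}(\mk{a})$ with $\Upomega(\upvarphi) = f$ (replacing $f$ by $f^{-1}$ if one wishes to be pedantic about the convention $\Upomega(\upvarphi) = (\restr{\upvarphi}{\mk{a}}^*)^{-1}$; this is harmless because $\Aut^\mr{w}(\Upsigma)_M$ is a group).

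Next I would pull $\upvarphi$ back to $\wt{K}$. By Corollary \ref{isotropy_automorphisms}, $\Ad_{\wt{G}}$ restricts to a Lie group isomorphism $\wt{K} \isoto \Aut(\mk{g})_M^\uptheta$, and this isomorphism sends $N_{\wt{K}}(\mk{a})$ onto $N_{\Aut(\mk{g})_M^\uptheta}(\mk{a})$ simply because the condition "$\Ad(k)(\mk{a}) = \mk{a}$" is the image under $\Ad$ of the condition "$k$ normalizes $\mk{a}$". Therefore there is a unique $k \in N_{\wt{K}}(\mk{a})$ with $\Ad(k) = \upvarphi$, and by construction $\restr{\Ad(k)}{\mk{a}}^* = f$.

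For the two refinements I would reuse the same bijection. If $f \in \Aut^\mr{w}(\DD)_M$ then $f$ preserves $\Upsigma^+$, so $\Ad(k)$ permutes the positive restricted root spaces and therefore preserves $\mk{n} = \bigoplus_{\upalpha \in \Upsigma^+}\mk{g}_\upalpha$; equivalently, $\upvarphi \in N_{\Aut(\mk{g})_M^\uptheta}(\mk{n})$, and since $\Ad(N_{\wt{K}}(\mk{n})) = N_{\Aut(\mk{g})_M^\uptheta}(\mk{n})$, the element $k$ automatically lies in $N_{\wt{K}}(\mk{n})$. If instead $f \in \W(\Upsigma)$, I would invoke the explicit construction from Section \ref{section_real} (the $\exp(\ad \tfrac{\uppi}{2}(X+\uptheta X))$ formula) showing that $\W(\Upsigma)$ is already contained in $\Upomega(N_{K^0}(\mk{a}))$, where in the present notation $K^0 = \Inn(\mk{g})^\uptheta$; then $\upvarphi$ can be taken in $N_{\Inn(\mk{g})^\uptheta}(\mk{a})$, and the identity $\Ad(N_K(\mk{a})) = N_{\Inn(\mk{g})^\uptheta}(\mk{a})$ from Corollary \ref{isotropy_automorphisms} places $k$ in $N_K(\mk{a})$. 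There is no real obstacle here beyond lining up the notation between Sections \ref{section_real} and \ref{symmetric_spaces}; all of the substance has been done in Theorem \ref{maintheorem} and Corollary \ref{isotropy_automorphisms}.
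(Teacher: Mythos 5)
Your proof is correct and is exactly the argument the paper leaves implicit: the corollary is stated immediately after the two display blocks
$\Upomega(N_{\Aut(\mk{g})_M^\uptheta}(\mk{a})) = \Aut^\mr{w}(\Upsigma)_M$, $\Upomega(N_{\Aut(\mk{g})_M^\uptheta}(\mk{n})) = \Aut^\mr{w}(\DD)_M$ and $\Ad(N_{\wt{K}}(\mk{a})) = N_{\Aut(\mk{g})_M^\uptheta}(\mk{a})$, $\Ad(N_{K}(\mk{a})) = N_{\Inn(\mk{g})^\uptheta}(\mk{a})$, $\Ad(N_{\wt{K}}(\mk{n})) = N_{\Aut(\mk{g})_M^\uptheta}(\mk{n})$, with no further justification, and your chain of deductions (lift under $\Upomega$, then pull back through the isomorphism $\Ad_{\wt{G}}\colon \wt{K} \isoto \Aut(\mk{g})_M^\uptheta$ of Corollary \ref{isotropy_automorphisms}, which manifestly respects the normalizer conditions) is precisely how one fills that in. You also correctly noted the inverse convention in $\Upomega(\upvarphi) = (\restr{\upvarphi}{\mk{a}}^*)^{-1}$ versus the statement's $\restr{\Ad(k)}{\mk{a}}^* = f$, and that this is immaterial since all the groups in play are closed under inversion; and you correctly identified the ``necessarily'' claim for $N_{\wt{K}}(\mk{n})$ with the equality $N_K(\mk{n}) = \Upomega^{-1}(\Aut^\mr{w}(\DD))$ established in Section \ref{section_real}.
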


We finish off with the following application. Corollary \ref{main_theorem_isometry} proves useful when one studies submanifolds of $M$ and wants to understand whether two given submanifolds are isometrically congruent, i.e. one can be mapped onto the other by some global isometry of $M$. One particular class of submanifolds that is especially important consists of so-called boundary components of $M$. We will give only the necessary definitions and will not go into detail as it would require an exposition of the theory of parabolic subgroups and subalgebras. 

Let $\Uplambda$ be a choice of simple roots for $\Upsigma$, and let $\Upphi \subseteq \Uplambda$ be any subset. Write $\Upsigma_\Upphi$ for the root subsystem of $\Upsigma$ spanned by $\Upphi$, and let $\mk{g}_\Upphi$ be the Lie subalgebra of $\mk{g}$ generated by all the root subspaces $\mk{g}_\upalpha$ as $\upalpha$ runs through $\Upsigma_\Upphi$. If we let $G_\Upphi$ be the connected Lie subgroup of $G$ corresponding to $\mk{g}_\Upphi$, then the orbit $B_\Upphi = G_\Upphi \ccdot o$ is a totally geodesic submanifold of $M$ called a boundary component. It is itself a symmetric space of noncompact type of smaller rank ($\rank(B_\Upphi) = |\Upphi|)$. In a sense, boundary components of $M$ are its nicest possible totally geodesic submanifolds.

\begin{proposition}
Let $M$ be a symmetric space of noncompact type. Let $\Upphi_1, \Upphi_2 \subseteq \Uplambda$, and assume that there exists $s \in \Aut^\mr{w}(\DD)_M$ such that $s(\Upphi_1) = \Upphi_2$. Then the boundary components $B_{\Upphi_1}$ and $B_{\Upphi_2}$ are isometrically congruent.
\end{proposition}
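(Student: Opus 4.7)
The plan is to apply Corollary \ref{main_theorem_isometry} directly and show that the resulting isometry of $M$ carries one boundary component onto the other. More precisely, since $s$ lies in $\Aut^{\mathrm{w}}(\DD)_M$, the corollary furnishes some $k \in N_{\wt{K}}(\mk{n}) \subseteq \wt{K}$ such that $(\restr{\Ad(k)}{\mk{a}})^* = s$. I will then argue that $k$ itself is the desired congruence, i.e.\ $k(B_{\Phi_1}) = B_{\Phi_2}$.

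The key step is to show that $\Ad(k)(\mk{g}_{\Phi_1}) = \mk{g}_{\Phi_2}$. Recall from the general construction before Proposition \ref{meow} (applied to $F = \Ad(k)$ viewed as a Lie algebra automorphism preserving $\uptheta$ and $\mk{a}$) that $\Ad(k)(\mk{g}_\upalpha) = \mk{g}_{s(\upalpha)}$ for every $\upalpha \in \Upsigma$. Since $s$ restricts to a bijection $\Phi_1 \isoto \Phi_2$ and is linear, it sends the root subsystem $\Upsigma_{\Phi_1}$ (spanned by $\Phi_1$) bijectively onto $\Upsigma_{\Phi_2}$. Therefore $\Ad(k)$ permutes the set of root spaces $\{\mk{g}_\upalpha \mid \upalpha \in \Upsigma_{\Phi_1}\}$ onto $\{\mk{g}_\upalpha \mid \upalpha \in \Upsigma_{\Phi_2}\}$. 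As a Lie algebra automorphism, it therefore maps the subalgebra generated by the former set, namely $\mk{g}_{\Phi_1}$, onto the subalgebra generated by the latter set, namely $\mk{g}_{\Phi_2}$.

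Passing from Lie algebras to connected subgroups, $\Ad(k)(\mk{g}_{\Phi_1}) = \mk{g}_{\Phi_2}$ means exactly $k G_{\Phi_1} k^{-1} = G_{\Phi_2}$ (both sides are connected Lie subgroups of $G$ with the same Lie algebra). Finally, $k$ fixes $o$ because $k \in \wt{K}$, so
$$
k(B_{\Phi_1}) = k(G_{\Phi_1} \ccdot o) = (k G_{\Phi_1} k^{-1}) \ccdot k(o) = G_{\Phi_2} \ccdot o = B_{\Phi_2},
$$
and since $k$ is an isometry of $M$, the boundary components $B_{\Phi_1}$ and $B_{\Phi_2}$ are isometrically congruent.

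The only genuine content is the lifting step, which is handled by Corollary \ref{main_theorem_isometry} and hence ultimately by Theorem \ref{maintheorem}; the rest is a transparent bookkeeping argument about how $\Ad(k)$ acts on restricted root spaces and hence on the Lie subalgebras $\mk{g}_\Phi$. I do not anticipate any real obstacle beyond ensuring that the chosen $k$ in fact normalizes $\mk{n}$ (and thus makes sense of applying $\Ad(k)$ to the whole setup), which is precisely why we use the $\Aut^{\mathrm{w}}(\DD)_M$ part of Corollary \ref{main_theorem_isometry} rather than the weaker $\Aut^{\mathrm{w}}(\Upsigma)_M$ statement.
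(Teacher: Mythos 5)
Your proposal is correct and follows essentially the same route as the paper's proof: lift $s$ to $k \in N_{\wt{K}}(\mk{n})$ via Corollary \ref{main_theorem_isometry}, observe that $\Ad(k)$ permutes the restricted root spaces according to $s$, conclude $\Ad(k)(\mk{g}_{\Upphi_1}) = \mk{g}_{\Upphi_2}$, and hence $k(B_{\Upphi_1}) = B_{\Upphi_2}$. The only nit is a sign/inverse convention: since $\Upomega(\Ad(k)) = (\restr{\Ad(k)}{\mk{a}}^*)^{-1}$, having $\restr{\Ad(k)}{\mk{a}}^* = s$ actually gives $\Ad(k)(\mk{g}_\upalpha) = \mk{g}_{s^{-1}(\upalpha)}$, not $\mk{g}_{s(\upalpha)}$; this is harmlessly repaired by applying the corollary to $s^{-1} \in \Aut^\mr{w}(\DD)_M$ (or replacing $k$ by $k^{-1}$), and the paper's own write-up makes the same implicit adjustment.
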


\begin{proof}
According to Corollary \ref{main_theorem_isometry}, there exists some $k \in N_{\wt{K}}(\mk{n})$ such that $\Upomega(\Ad(k)) = (\restr{\Ad(k)}{\mk{a}}^*)^{-1} = s$. Since $\Ad(k)(\mk{g}_\upalpha) = \mk{g}_{\Upomega(\Ad(k))(\upalpha)} = \mk{g}_{s(\upalpha)}$, we must have $\Ad(k)(\mk{g}_{\Upphi_1}) = \mk{g}_{\Upphi_2}$, which implies $k G_{\Upphi_1} k^{-1} = G_{\Upphi_2}$ and thus $k(B_{\Upphi_1}) = B_{\Upphi_2}$, which completes the proof.
\end{proof}

\printbibliography

\end{document}